\DeclareFontFamily{U}{mathux}{\hyphenchar\font45}
\DeclareFontShape{U}{mathux}{m}{n}{
      <5> <6> <7> <8> <9> <10>
      <10.95> <12> <14.4> <17.28> <20.74> <24.88>
      mathux10
      }{}
\DeclareSymbolFont{mathux}{U}{mathux}{m}{n}
\DeclareMathSymbol{\bigovee}{1}{mathux}{"8F}
\DeclareSymbolFont{cmlargesymbols}{OMX}{cmex}{m}{n}
\DeclareMathSymbol{\cmcoprod}{\mathop}{cmlargesymbols}{"60}
\let\coprod\cmcoprod
\newlist{myenum}{enumerate}{2}
\setlist[myenum,1]{label=\textup{(\roman*)},ref=(\roman*)}
\setlist[myenum,2]{label=\textup{(\alph*)},ref=(\alph*)}
\newcommand{\urlalt}[2]{\href{#1}{\urlstyle{rm}\nolinkurl{#2}}}
\newcounter{main}
\newtheorem{theorem}[main]{Theorem}
\newtheorem{proposition}[main]{Proposition}
\newtheorem{lemma}[main]{Lemma}
\newtheorem{corollary}[main]{Corollary}
\theoremstyle{definition}
\newtheorem{definition}[main]{Definition}
\newtheorem{example}[main]{Example}
\newtheorem{remark}[main]{Remark}
\newcommand{\sse}{\subseteq}
\newcommand{\id}{\mathrm{id}}
\newcommand{\opp}{\mathrm{op}}
\newcommand\C{\mathbb{C}}
\newcommand\R{\mathbb{R}}
\newcommand\N{\mathbb{N}}
\newcommand\Define[1]{\textbf{#1}}
\DeclarePairedDelimiter{\weight}{\lvert}{\rvert}
\DeclareMathSymbol{\mhyphen}{\mathalpha}{operators}{"2D}
\newcommand{\cat}[1]{\mathbf{#1}}
\newcommand{\catC}{\cat{C}}
\newcommand{\catD}{\cat{D}}
\newcommand{\Rpos}{\mathbb{R}_{\ge 0}}
\DeclareMathOperator{\Dom}{Dom}
\newcommand{\Set}{\mathbf{Set}}
\newcommand{\Pfn}{\mathbf{Pfn}}
\newcommand{\EA}{\mathbf{EA}}
\newcommand{\sBBNS}{\boldsymbol{\sigma}\mathbf{BBNS}}
\newcommand{\sBOUS}{\boldsymbol{\sigma}\mathbf{BOUS}}
\newcommand{\OVSu}{\mathbf{OVSu}}
\newcommand{\OVSt}{\mathbf{OVSt}}
\newcommand{\Wstar}{\mathbf{Wstar}}
\newcommand{\pSet}{\mathbf{Set}_{*}}
\newcommand{\sEA}{\boldsymbol{\sigma}\mathbf{EA}}
\newcommand{\omegaBA}{\boldsymbol{\omega}\mathbf{BA}}
\newcommand{\MyNewModLikeCat}[2]{%
  \NewDocumentCommand{#1}{o}{#2\IfValueT{##1}{_{##1}}}}
\MyNewModLikeCat{\WMod}{\mathbf{WMod}}
\MyNewModLikeCat{\CWMod}{\mathbf{CWMod}}
\MyNewModLikeCat{\sWMod}{\boldsymbol{\sigma}\mathbf{WMod}}
\MyNewModLikeCat{\sCWMod}{\boldsymbol{\sigma}\mathbf{CWMod}}
\MyNewModLikeCat{\sEMod}{\boldsymbol{\sigma}\mathbf{EMod}}
\MyNewModLikeCat{\EMod}{\mathbf{EMod}}
\MyNewModLikeCat{\AEMod}{\mathbf{AEMod}}
\MyNewModLikeCat{\Conv}{\mathbf{Conv}}
\newcommand{\Pow}{\mathcal{P}}
\newcommand{\truth}{\mathbf{1}}
\newcommand{\falsity}{\mathbf{0}}
\newcommand{\mpunct}[1]{\,\text{#1}}
\newcommand{\Tot}{\mathrm{Tot}}
\newcommand{\St}{\mathrm{St}}
\newcommand{\sSt}{\St_{\le}}
\newcommand{\Pred}{\mathrm{Pred}}
\newcommand{\Base}{B}
\newcommand{\sBase}{\Base_{\le}}
\newcommand{\pproj}{\mathord{\vartriangleright}}
\newcommand{\copow}{\cdot}
\newcommand{\Tz}{\mathcal{T}} 
\newcommand{\Mlt}{\mathcal{M}}
\newcommand{\smul}{\cdot}
\DeclarePairedDelimiter{\paren}{(}{)}
\DeclarePairedDelimiter{\tup}{\langle}{\rangle}
\DeclarePairedDelimiter{\cotup}{[}{]}
\DeclarePairedDelimiter{\ptup}{\langle\!\langle}{\rangle\!\rangle}
\newcommand{\my@set@mid}{\mathrel{}\mathclose{}\delimsize\vert\mathopen{}\mathrel{}}
\DeclarePairedDelimiterX{\set}[1]{\lbrace}{\rbrace}{%
  \my@repl@vert@i{\my@set@mid}{#1}}
\NewDocumentCommand{\my@repl@vert@i}{m >{\SplitArgument{1}{|}}m}{\my@repl@vert@i@{#1}#2}
\NewDocumentCommand{\my@repl@vert@i@}{m m m}{\IfNoValueTF{#3}{#2}{#2#1#3}}
\newenvironment{talign*}
 {\csname align*\endcsname}
 {\endalign}
\let\talign\align@qed
\let\csname talign*@qed\endcsname\align@qed
\newenvironment{Auxproof}{}{}
\newcommand{\hideauxproof}{%
  \renewenvironment{Auxproof}{\@bsphack\Collect@Body\@gobble}{\@Esphack}%
}
\title{Dichotomy between Deterministic and Probabilistic Models\\ in Countably Additive Effectus Theory}
\author{Kenta Cho
\institute{National Institute of Informatics, Japan}
\email{cho@nii.ac.jp} \and
    Bas Westerbaan
\institute{University College London}
\email{bas@westerbaan.name} \and
    John van de Wetering
\institute{Radboud Universiteit Nijmegen}
\email{wetering@cs.ru.nl}
}
\begin{document}

\maketitle

\begin{abstract}
Effectus theory is a relatively new approach to categorical logic
that can be seen as an abstract form of generalized probabilistic
theories (GPTs).  While the scalars of a GPT are always the real
unit interval~$[0,1]$, in an effectus they can form any \emph{effect
monoid}. Hence, there are quite exotic effectuses resulting from
more pathological effect monoids.


In this paper we introduce \emph{$\sigma$-effectuses}, where certain
countable sums of morphisms are defined. We study in particular
$\sigma$-effectuses where unnormalized states can be normalized.
We show that a non-trivial~$\sigma$-effectus with normalization
        has as scalars
            either the two-element effect monoid~$\{0,1\}$
            or the real unit interval~$[0,1]$.
When states and/or predicates separate the morphisms we find that in
the $\{0,1\}$ case the category must embed into the category of sets
and partial functions (and hence the category of Boolean algebras),
showing that it implements a deterministic model,
while in the $[0,1]$ case we find it embeds into the category of 
Banach order-unit spaces and of Banach pre-base-norm
spaces (satisfying additional properties), recovering the structure present in GPTs.

Hence, from abstract categorical and operational considerations we find a dichotomy between deterministic and convex probabilistic models of physical theories.
\end{abstract}

\section{Introduction}

In the widely used generalized probabilistic theories
(GPTs),
see e.g.~\cite{barrett2007information,BarnumBCLS2010,BarnumW2016,CassinelliL2016},
measurement and probability are of central importance. A system in a
GPT is described by a
real vector space
corresponding to the states of the system, while the \emph{effects},
two-outcome measurements, lie in the dual vector space.

Effectus theory, introduced by Jacobs~\cite{jacobs2015new},
is an approach to categorical logic that can describe
deterministic, probabilistic or quantum logic;
see also \cite{cho2015introduction,Cho2019PhD,basthesis}.
An effectus is analogous to a GPT where the real interval $[0,1]$
of probabilities is replaced by an \emph{effect
monoid}~$M$. As a result, states form an \emph{(abstract) convex set} over~$M$ instead of lying in a real vector space, while effects form an \emph{effect module} over~$M$.
Tull~\cite{tull2016,tull2019phdthesis}
showed that effectuses can be understood as
certain \emph{operational theories}
in the style of
Chiribella et al.~\cite{chiribella2010probabilistic,DArianoCP2017}.

Taking the effect monoid of scalars in an effectus to be $[0,1]$, the effectus is quite close in structure to that of a GPT (especially when operationally motivated state/effect separation properties are imposed, cf.~Section~\ref{sec:separation}). 
Instead taking the scalars to be the Booleans $\{0,1\}$, the effectus describes a deterministic theory where every predicate either holds with certainty on each state, or does not hold at all.
Every effect monoid can form the set of scalars of an effectus
(Propositions~\ref{prop:emod-effectus}
and~\ref{prop:wmod-effectus}),
and since there exist quite pathological effect
monoids, there are exotic effectuses that have no easy comparison
to GPTs or deterministic theories.

In this paper we show that this situation changes when we
consider effectuses with some additional structure.
A central notion in effectus theory is the existence of certain sums of morphisms.
In this paper we introduce \emph{$\sigma$-effectuses},
where we strengthen this to the existence of
certain countable sums of morphisms,
based on the well-established notion of
\emph{partially additive categories}~\cite{ArbibM1980,ManesA1986}.
The extension allows
measurements with countably many
outcomes (see Remark~\ref{remark:operational-interpretation}),
and also it generalizes
the assumption that one can form
countable mixture of states
(e.g.\ \cite{Mackey2004,EdwardsG1970,Edwards1970}).
In a~$\sigma$-effectus the scalars form an \emph{$\omega$-complete}
effect monoid (i.e.~where suprema of increasing sequences exist).
In \cite{effectmonoids} these were shown to always embed in a
direct sum of a Boolean algebra and the unit interval of a commutative C$^*$-algebra.
This characterization shows that the scalars in a $\sigma$-effectus
are necessarily well-behaved. This has several immediate
consequences for~$\sigma$-effectuses, such as that the scalars are
always commutative.

A natural condition, which in GPTs is usually assumed implicitly,
is that every unnormalized state can be \emph{normalized}.
We present a number of equivalent conditions for a $\sigma$-effectus to allow normalization, one of which is that the scalars
must be one of~$\{0\}$, $\{0,1\}$ and $[0,1]$.

Hence $\sigma$-effectuses with normalization come in three different
types.  When the scalars of an effectus are $\{0\}$, the category is
equivalent to the trivial single object category, and hence this
type is not particularly interesting.
If instead the scalars are $\{0,1\}$, the $\sigma$-effectus describes a
deterministic theory where each predicate (does not) hold with certainty.
If we additionally assume that states separate morphisms,
every such $\sigma$-effectus $\catC$ has a faithful morphism
of $\sigma$-effectuses into
the category $\Pfn$ of sets and partial functions
(and hence the category of Boolean algebras).
And finally, if the scalars are~$[0,1]$ we have a GPT-like convex
probabilistic theory.
Under suitable separation assumptions,
the $\sigma$-effectus faithfully embeds into
a category of \emph{order-unit spaces} and
of \emph{(pre-)base-norm spaces},
which are ordered vector spaces used in
GPTs~\cite{BarnumW2016,CassinelliL2016}.
Our results then establish, from purely categorical and operational
considerations, a dichotomy between classical deterministic
and convex probabilistic models.

\section{Preliminaries}




We recall
the well-established notions of
partially $\sigma$-additive monoids
and partially $\sigma$-additive categories\footnote{%
Arbib and Manes
called these notions simply `partially additive monoids'
and `partially additive categories'.
We here added `$\sigma$' in order to emphasize their
countable structures and to avoid confusion with
their finitary counterparts.}
due to Arbib and Manes \cite{ArbibM1980,ManesA1986},
and the finitary counterparts of these structures.
Further details can be found in \cite{Cho2019PhD}.

\begin{definition}
\label{def:pcm}
A \Define{partial commutative monoid}
(\Define{PCM})
is a set $X$ with an element $0\in X$ 
and a \emph{partial} binary operation
$\ovee\colon X\times X\rightharpoonup X$
such that for all $x,y,z \in X$
\begin{itemize}
	\item $(x\ovee y)\ovee z = x\ovee(y\ovee z)$ (associativity),
	\item $x\ovee y= y\ovee x$ (commutativity),
	\item and $0\ovee x = x$ (unitality).
\end{itemize}
Here `$=$' is taken to be a \emph{Kleene equality}:
`if either side is defined, then so is the other,
and they are equal'. 
Hence an equation like $x\ovee y = z$ is taken to
mean both that $x\ovee y$ is defined, as well 
as that we have the equality $x\ovee y = z$.
We will write $x\perp y$ to denote $x\ovee y$ is defined.

Let $M,N$ and $L$ be PCMs.
A function $f:M\rightarrow N$ is \Define{additive} if $f(0)=0$ and
$f(x)\ovee f(y) = f(x\ovee y)$ for all $x\perp y$ in $M$.
A function $g\colon M\times N\to L$
is \Define{biadditive}
if $g(x,-)\colon N\to L$
and $g(-,y)\colon M\to L$ are additive
for all $x\in M$ and $y\in N$.
\end{definition}

A finite sequence $x_1,\dotsc,x_n$ in a PCM $M$ is \Define{summable}
if $\bigovee_{i=1}^n x_i \coloneqq
(\dotsb (x_1\ovee x_2)\ovee \dotsb)\ovee x_n$
is defined in $M$.
The sum $\bigovee_{i=1}^n x_i$ does not depend on the ordering,
yielding a partial addition operation on finite families.
Arbib and Manes defined the notion of partial addition
extended to countable families.

\begin{definition}
A \Define{partially $\sigma$-additive monoid} (\Define{$\sigma$-PAM})
is a nonempty set $M$
equipped with
a partial operation $\bigovee$ that sends
a countable family $(x_j)_{j\in J}$ of elements in $M$
to an element $\bigovee_{j\in J}x_j$ in $M$,
satisfying the three axioms below.
We say that $(x_j)_{j\in J}$ is \Define{summable} if
$\bigovee_{j\in J}x_j$ is defined.
\begin{itemize}
\item \Define{Partition-associativity axiom}:
For each countable family $(x_j)_{j\in J}$
and each countable partition
$J=\biguplus_{k\in K} J_k$,
the family $(x_j)_{j\in J}$ is summable if and only if
$(x_j)_{j\in J_k}$ is summable for each $k\in K$
and $(\bigovee_{j\in J_k} x_j)_{k\in K}$ is summable.
In that case, one has
$\bigovee_{j\in J} x_j
=
\bigovee_{k\in K}
\bigovee_{j\in J_k} x_j$.
\item \Define{Unary sum axiom}:
Each singleton $\{x_j\}_{j\in \set{*}}$
is summable and satisfies $\bigovee_{j\in \set{*}} x_j=x_{*}$.
\item \Define{Limit axiom}:
A countable family $(x_j)_{j\in J}$
is summable
whenever
for any finite subset $F\subseteq J$,
the subfamily $(x_j)_{j\in F}$ is summable.
\end{itemize}
Note that every $\sigma$-PAM is a PCM via
$x_1\ovee x_2=\bigovee_{i\in\set{1,2}} x_i$
and $0=\bigovee\emptyset$.

Let $M,N$ and $L$ be $\sigma$-PAMs.
A function $f\colon M\to N$ is \Define{$\sigma$-additive}
if for any summable family $(x_j)_{j\in J}$ in $M$,
the family $(f(x_j))_{j\in J}$ is summable in $N$
and $f(\bigovee_{j\in J}(x_j))=\bigovee_{j\in J} f(x_j)$.
A function $g\colon M\times N\to L$
is \Define{$\sigma$-biadditive}
if $g(x,-)\colon N\to L$
and $g(-,y)\colon M\to L$ are $\sigma$-additive
for all $x\in M$ and $y\in N$.
\end{definition}

Following Arbib and Manes, we will introduce
a notion of categories equipped with
partial addition of morphisms.
But first we require some additional definitions.
To better understand these definitions the reader might consult Examples~\ref{ex:pfn} and~\ref{ex:wstar} that satisfy the assumptions of these definitions.
\begin{definition}
A category $\catC$
is \Define{enriched over PCMs}
(resp.\ \Define{enriched over $\sigma$-PAMs})
if each homset $\catC(A,B)$ is a PCM
(resp.\ $\sigma$-PAM)
and each composition map
$\circ\colon\catC(B,C)\times \catC(A,B)\to \catC(A,C)$
is biadditive (resp.~$\sigma$-biadditive).
\end{definition}

\begin{definition}
A category $\catC$ with zero morphisms $0\colon A\to B$
(such as when it is enriched over PCMs) has for
each coproduct $\coprod_{j\in J} A_j$
 \Define{partial projections}
$\pproj_i\colon \coprod_{j\in J} A_j\to A_i$
characterized by $\pproj_i\circ \kappa_i=\id$
and $\pproj_i\circ \kappa_k=0$ for $k\neq i$.
Here $\kappa_i\colon A_i\to \coprod_{j\in J} A_j$
denote coprojections.
A family $(f_j\colon B\to A_j)_{j\in J}$ of morphisms
in $\catC$ is \Define{compatible} if there exists an
$f\colon B\to \coprod_{j\in J} A_j$ such that
$\pproj_j\circ f= f_j$ for each $j\in J$.
\end{definition}

\begin{definition}
\label{def:sigma-pac}
A \Define{finitely partially additive category}
(resp.\ \Define{partially $\sigma$-additive category})
is a category with finite (resp.\ countable) coproducts 
that is enriched over PCMs
(resp.\ over $\sigma$-PAMs)
satisfying the following two axioms
relating coproducts to the additive structure.
\begin{itemize}
\item \Define{Compatible sum axiom}:
Compatible pairs of morphisms $f,g\colon A\to B$
(resp.\ countable families $(f_j\colon A\to B)_{j\in J}$)
are summable in $\catC(A,B)$.
\item \Define{Untying axiom}:
If $f,g\colon A\to B$ are summable,
then $\kappa_1\circ f,\kappa_2\circ g\colon A\to B+B$
are summable too.
\end{itemize}
We write `finPAC' for `finitely partially additive category' and `$\sigma$-PAC' for `partially $\sigma$-additive category'.
\end{definition}
\begin{remark}
  Fin/$\sigma$-PACs can be characterized in a more categorically simple manner as categories with finite/countable coproducts, zero maps, and some other axioms~\cite[\S\,3.8.1]{Cho2019PhD}, \cite[\S\,5]{ArbibM1980}.
\end{remark}
Before moving on to effectuses, we need a final additional type of structure.

\begin{definition}
\label{def:EA}
    An \Define{effect algebra}~\cite{foulis1994effect} is a PCM $(E,\ovee, 0)$
with a `top' element $1\in E$ such that
for each $a\in E$,
\begin{itemize}
	\item there is a unique $a^\bot\in E$
	(called the orthosupplement) such that $a\ovee a^\bot=1$,
	\item and $a\perp 1$ implies $a=0$.
\end{itemize}
We write $\EA$ for the category of
effect algebras and additive maps.
\end{definition}
Note that effect algebras are posets with the partial order defined by $a\leq b$ iff $a\ovee c = b$ for some $c$.

\begin{example}\label{ex:Boolean-is-effect-algebra}
  A Boolean algebra $(B,0,1,\vee, \perp)$ is an effect algebra with
    $(\ )^\bot$ the regular complement, $a\perp b$ iff $a \wedge b = 0$
  and in that case $a\ovee b \equiv a\vee b$.
\end{example}

\begin{example}
  \label{ex:standard-effect-algebra}
  Let $B(H)$ be the space of bounded operators on a Hilbert space $H$
  equipped with the standard partial order. Its \emph{effects} are
  the operators $A\in B(H)$ satisfying $0\leq A\leq 1$. The space
  of effects $[0,1]_{B(H)}$ is then an effect algebra with $A\perp
  B$ when $A+B \leq 1$ and then $A\ovee B\equiv A+B$.
\end{example}

\begin{remark}\label{partial-remark}
The usual notion of morphisms
$f\colon E\to D$ between
effect algebras requires them to additionally be \Define{unital} in the sense that $f(1)=1$.
Our morphisms in $\EA$ however are only
`subunital', i.e.\ $f(1)\le 1$.
We make this change
because
we will use effectuses \emph{in partial form}
which denotes a category
with `partial' morphisms;
see Remark~\ref{partial-total-remark} below.
We will require a similar change in morphisms
in several other categories.
\end{remark}

\begin{remark}
The definition of an effect algebra might seem a bit arbitrary.
They are however canonical in the following way:
    the category of effect algebras with (unital) morphisms
    is isomorphic to the Eilenberg--Moore category
    for the free-forgetful adjunction between the category of orthomodular posets
        and that of bounded posets~\cite{harding2004remarks,jenvca2015effect}.
\end{remark}

\section{Effectuses and \texorpdfstring{$\sigma$}{sigma}-effectuses}
\label{sec:effectus-sigma-effectus}

In this section, we present the basic theory of $\sigma$-effectuses.
We describe effectuses as well,
showing how their theory~\cite{cho2015introduction,jacobs2015new}
can be naturally extended to the $\sigma$-additive setting.
In addition, we introduce a notion of \emph{($\sigma$-)weight modules}
to axiomatize the structure of substates.


A ($\sigma$-)effectus is basically a fin/$\sigma$-PAC with a special
unit object representing `no system'. The morphisms to the unit object
are then the ways in which a system can be `destroyed' or `measured'
and hence are the effects of the system.
They are assumed to form effect algebras.

\begin{definition}
\label{def:effectus}
    An \Define{effectus}
(in partial form, see Remark~\ref{partial-total-remark} below) is a finPAC $\catC$
with a distinguished `unit' object $I\in\catC$
satisfying
the following conditions.
\begin{myenum}
\item
For each $A\in\catC$,
the hom-PCM $\catC(A,I)$ is an effect algebra.
We write $\truth_A$
and $\falsity_A=0_{A I}$
for the top and bottom in $\catC(A,I)$.
\item
$\truth_B\circ f=\falsity_A$ implies $f=0_{A B}$
for all $f\colon A\to B$.
\item
$\truth_B\circ f\perp \truth_B\circ g$
implies $f\perp g$
for all $f,g\colon A\to B$.
\end{myenum}
A \Define{$\sigma$-effectus} is a $\sigma$-PAC
$\catC$ with a distinguished
object $I\in\catC$ satisfying the same conditions (i)--(iii).

\parpic(3cm,1.9cm)[r]{%
\begin{tikzcd}[row sep=scriptsize,ampersand replacement=\&]
FA\ar[d, "\truth_{FA}"'] \ar[dr, "F\truth_A"] \&
\\
I_{\catD} \ar[r, "u"', "\cong"] \& FI_{\catC}
\end{tikzcd}}
A \Define{morphism of effectuses} (resp.\ \Define{$\sigma$-effectuses})
$(\catC,I_{\catC})\rightarrow (\catD,I_{\catD})$ is a
functor $F\colon \catC\to\catD$
that preserves finite (resp.\ countable) coproducts
and `preserves the unit' in the sense that
there is an isomorphism $u\colon I_{\catD}\to FI_{\catC}$
such that $F\truth_A=u\circ \truth_{FA}$ for each $A\in\catC$. 
I.e.~the diagram on the right commutes.
\end{definition}

There are several types of morphisms in an effectus that have special significance:
\begin{itemize}
	\item A \Define{predicate} on $A$ is any morphism $p\colon A\to I$. 
	We write $\Pred(A)=\catC(A,I)$ for the set of predicates.
	\item A \Define{substate} on $A$ is any morphism $\omega\colon I\to A$.
	We write $\sSt(A) = \catC(I,A)$ for the set of substates.
	\item A morphism $f\colon A\to B$ in a ($\sigma$-)effectus
is \Define{total} if $\truth_B \circ f=\truth_A$. The total morphisms form a (wide) subcategory
$\Tot(\catC)\hookrightarrow\catC$.
\item A \Define{state} on $A$ is a substate that is total. We write $\St(A)=\Tot(\catC)(I,A)$ for the set of states.
\item A \Define{scalar} is a morphism $s\colon I\to I$.
We view these as abstract probabilities.
\end{itemize}

\begin{remark}
\label{remark:operational-interpretation}
As studied by Tull \cite{tull2016,tull2019phdthesis},
one can interpret a ($\sigma$-)effectus as
an \emph{operational theory} in the style
of Chiribella et
al.~\cite{chiribella2010probabilistic,
ChiribellaDP2015,DArianoCP2017}
(see also \cite[\S\,6.1, 6.2]{Cho2019PhD}).
In their terminology,
each morphism $f\colon A\to B$ is called an \Define{event}.
A \Define{test} from system $A$ to $B$
is then a summable family of events
$(f_x\colon A\to B)_{x\in X}$
such that $\bigovee_{x\in X} f_x$ is total.
The indexing set $x\in X$ is understood as
the set of outcomes of the test.
In particular,
a `preparation' test $(\omega_x\colon I\to A)_{x\in X}$
consists of substates and
an `observation' test $(p_x\colon A\to I)_{x\in X}$
consists of predicates.
Each `closed' test $(s_x\colon I\to I)_{x\in X}$,
which satisfies $\bigovee_{x\in X} s_x=1$, describes the
abstract probability $s_x$ that the test yields an outcome $x\in X$.
\end{remark}

\begin{example}\label{ex:pfn}
  A \Define{partial function} $f\colon X\rightharpoonup Y$ is
  a function of sets
  where for each $x\in X$, $f(x)$ is either an element of $Y$ or
  undefined. We write $\Dom(f)\sse X$ for the domain of definition,
  i.e.\ the set of $x\in X$
  where $f(x)$ is defined. Partial functions compose in the obvious
  way. The category of sets and partial functions $\Pfn$ is a
  $\sigma$-effectus with the singleton $I=\set{*}$ as unit.
  Partial functions are summable when they have disjoint
  domains of definition.
  Such partial functions can be merged into one partial function
  in the obvious way, which defines the sum.
  Indeed, $\Pfn$ is the prototypical example of a $\sigma$-PAC
  in \cite{ArbibM1980,ManesA1986}.
  For a set $X$, we have
  $\St(X) \cong X$ and $\Pred(X) \cong \Pow(X)$, the powerset of $X$.
  Finally, the total maps are the partial functions that are defined
  everywhere, and hence $\Tot(\Pfn) \cong \Set$.
\end{example}

\begin{example}\label{ex:wstar}
  Let $\Wstar$ be the category of
  $W^*$-algebras (also known as \emph{von Neumann algebras}) and subunital normal positive linear maps
  (see \cite[\S\,2.6]{Cho2019PhD} for the definitions).
  Then the opposite $\Wstar^{\opp}$
  is a $\sigma$-effectus with $\C$ as unit.
  A family of maps $f_j\colon\mathfrak{A}\rightarrow\mathfrak{B}$
  in $\Wstar^{\opp}$ for $j\in J$ is summable iff
  $\sum_{j\in F}
  f_j(1_{\mathfrak{B}})\le 1_{\mathfrak{A}}$ in $\mathfrak{A}$
  for all finite $F\subseteq J$.
  Then define $(\bigovee_j f_j)(b) = \sum_{j\in J} f_j(b)$
  where the infinite sum converges ultraweakly in $\mathfrak{A}$.
  States on
  $\mathfrak{A}\in\Wstar^{\opp}$ are
  unital normal positive maps from $\mathfrak{A}\to\C$,
  which are
  known as \emph{normal states} in the literature.
  The set of predicates
  $\Pred(\mathfrak{A}) = [0,1]_{\mathfrak{A}}$ is its unit interval.
  The total maps are precisely the unital maps.
  We note that the category of C$^*$-algebras
  similarly forms an effectus, but not a $\sigma$-effectus
  \cite[Example 7.3.36]{Cho2019PhD}.
\end{example}

\begin{remark}\label{partial-total-remark}
  What we defined as an effectus is called an \emph{effectus in
    partial form} in~\cite{cho2015introduction}. It is also possible
  to axiomatize an \emph{effectus in total form}. Given an
  effectus~$C$ in partial form, the subcategory of total
  maps~$\Tot(C)$ is an effectus in total form, which has a final
  object $1=I$. As a total map~$A \to B + 1$ corresponds to a
  (partial) map~$A \to B$, one can define from an effectus in total
  form a category of partial maps, which turns out to recover the
  original effectus in partial form. This correspondence leads to a
  2-categorical equivalence of the relevant categories of
  effectuses~\cite{Cho2015} (see also \cite[\S\,4.2]{Cho2019PhD}).
  We elected to work here with effectuses in partial form because
  the definition admits an obvious extension to the $\sigma$-additive
  case.
  One can define $\sigma$-effectuses in total form
  through the equivalence of the two forms of effectuses,
  but we do not know whether they admit an
  intrinsic categorical characterization
  like effectuses in total form,
  which can be defined in terms of
  pullbacks and jointly monic morphisms
  \cite[Definition 2]{cho2015introduction}.
\end{remark}

By definition,
predicates $p\colon A\to I$ in an effectus form
an effect algebra.
In a $\sigma$-effectus, predicates also have
a $\sigma$-additive structure.
We will show that
the structure of predicates in a $\sigma$-effectus
is captured precisely by the well-established notion of
$\sigma$-effect algebras.

\begin{definition}
\label{def:sigma-effect-algebra}
A \Define{$\sigma$-effect algebra}
\cite{Gudder1998,FoulisG2007}
is an effect algebra
whose partial ordering is \Define{$\omega$-complete},
that is,
where any increasing sequence $a_0\leq a_1\leq \ldots$ has a supremum.
We say a countable family $(x_j)_{j\in J}$ in a $\sigma$-effect algebra $E$
is \Define{summable} when the family $(x_j)_{j\in F}$ is summable
for every finite subset $F\subseteq J$.
For a summable countable family $(x_j)_{j\in J}$
we define $\bigovee_{j\in J} x_j=\bigvee_{F} \bigovee_{j\in F} x_j$
where $F$ runs over all finite subsets of $J$,
and the supremum exists by $\omega$-completeness.
\end{definition}
The definition of sums of countable families 
equips each $\sigma$-effect algebra with 
a canonical $\sigma$-PAM structure
that extends its PCM structure.
Conversely, each effect algebra that is a $\sigma$-PAM is $\omega$-complete.

\begin{proposition}\label{prop:effect-algebra-PAM-iff-complete}
    Let $E$ be an effect algebra with a $\sigma$-PAM structure that extends the PCM structure of $E$. 
    Then $E$ is $\omega$-complete and hence a $\sigma$-effect algebra.
    Moreover, the $\sigma$-PAM structure coincides with the canonical
    $\sigma$-PAM structure of the $\sigma$-effect algebra $E$.
\end{proposition}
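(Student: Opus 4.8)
The plan is to establish $\omega$-completeness by producing suprema of increasing sequences explicitly as countable sums in the given $\sigma$-PAM, and then to read off the coincidence of the two structures from the very same computation. Throughout I will use that, since the $\sigma$-PAM structure extends the PCM structure of $E$, its finite sums $\bigovee_{j\in F}x_j$ agree with the iterated effect-algebra sums (this follows from the unary sum and partition-associativity axioms by induction on $|F|$), so that for finite families ``summable'' means the same thing in both structures.

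First I would prove $\omega$-completeness. Given an increasing sequence $a_0\le a_1\le\cdots$, set $b_0=a_0$ and choose $b_n$ with $a_{n-1}\ovee b_n=a_n$ (possible since $a_{n-1}\le a_n$), so that $\bigovee_{i=0}^n b_i=a_n$. Every finite subfamily of $(b_i)_{i\in\N}$ lies inside some $(b_i)_{i\le N}$, hence is summable with sum below $a_N$; the Limit axiom then makes $(b_i)_{i\in\N}$ summable, and I put $s=\bigovee_{i\in\N}b_i$. Partitioning $\N=\set{0,\dots,n}\uplus\set{n+1,\dots}$ and applying partition-associativity gives $s=a_n\ovee\bigovee_{i>n}b_i$, so $a_n\le s$ for all $n$; thus $s$ is an upper bound.

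The key step is that $s$ is the \emph{least} upper bound, and here I would use orthosupplements. Let $u$ be any upper bound, so that $a_n\le u$, equivalently $a_n\perp u^\bot$ by the standard effect-algebra equivalence $x\le y^\bot\iff x\perp y$. I then consider the family $(b_i)_{i\in\N}$ together with one further term equal to $u^\bot$. A finite subfamily avoiding $u^\bot$ is summable as before; one containing $u^\bot$ consists of some $(b_i)_{i\in F'}$ and $u^\bot$, and since $\bigovee_{i\in F'}b_i\le a_{\max F'}\le u$ we get $\bigovee_{i\in F'}b_i\perp u^\bot$, so it too is summable. By the Limit axiom the augmented family is summable, and partition-associativity splitting off the single term $u^\bot$ shows that $s\ovee u^\bot$ is defined, i.e.\ $s\perp u^\bot$, i.e.\ $s\le u$. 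Hence $s=\bigvee_n a_n$, so $E$ is $\omega$-complete and therefore a $\sigma$-effect algebra.

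For the ``moreover'' part I would first match the two notions of summability: by partition-associativity a family summable in the given $\sigma$-PAM has all finite subfamilies summable, while conversely the Limit axiom recovers summability from summability of all finite subfamilies; as finite summability is the common effect-algebra notion, this is precisely the condition of Definition~\ref{def:sigma-effect-algebra}. To see the sums agree, I enumerate the countable index set and note that the initial partial sums $s_n=\bigovee_{i\le n}x_{j_i}$ form an increasing sequence cofinal among the finite partial sums $\bigovee_{j\in F}x_j$, so the canonical value $\bigvee_F\bigovee_{j\in F}x_j$ equals $\bigvee_n s_n$. But the computation of the first part applies verbatim to the increasing sequence $(s_n)$ with the $x_{j_i}$ playing the role of the increments (since $\bigovee_{i\le n}x_{j_i}=s_n$), giving that the given $\sigma$-PAM sum $\bigovee_{j}x_j$ is also exactly $\bigvee_n s_n$. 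The two structures therefore coincide. The only genuinely non-routine point is the least-upper-bound argument — in particular the device of adjoining the term $u^\bot$ to convert the bound $a_n\le u$ into orthogonality with $u^\bot$; everything else is bookkeeping with the three $\sigma$-PAM axioms.
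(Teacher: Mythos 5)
Your proposal is correct and follows essentially the same route as the paper's proof: the same decomposition of the chain into increments $b_n$ with $a_n=\bigovee_{k\le n}b_k$, the same use of the limit axiom plus partition-associativity to form the sum and show it is an upper bound, the same device of adjoining the orthosupplement $u^\bot$ of an upper bound to deduce $s\perp u^\bot$ and hence minimality, and the same reduction of the ``moreover'' part to this computation after enumerating the index set. Your write-up merely spells out a few steps the paper leaves implicit (agreement of finite sums, the cofinality of initial partial sums), so there is nothing substantive to distinguish the two arguments.
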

\begin{proof}
    See Appendix~\ref{appendix:proofs-sigma-effectus}.
\end{proof}

\begin{corollary}
For any object $A$ in a $\sigma$-effectus $\catC$, $\Pred(A)=\catC(A,I)$
forms a $\sigma$-effect algebra.
\qed
\end{corollary}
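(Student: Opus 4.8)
The plan is to recognise that $\catC(A,I)$ already carries two interacting additive structures and then to reduce the statement to Proposition~\ref{prop:effect-algebra-PAM-iff-complete}. First I would observe that, since a $\sigma$-effectus is by definition a $\sigma$-PAC, it is enriched over $\sigma$-PAMs; in particular the homset $\catC(A,I)$ is a $\sigma$-PAM. As noted after the definition of $\sigma$-PAM, every $\sigma$-PAM is a PCM via the binary sum $x_1\ovee x_2 = \bigovee_{i\in\set{1,2}} x_i$ together with $0 = \bigovee\emptyset$, so $\catC(A,I)$ comes with an underlying hom-PCM.

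Next I would invoke condition (i) in the definition of a $\sigma$-effectus, which states precisely that this hom-PCM $\catC(A,I)$ is an effect algebra, with top element $\truth_A$ and bottom $\falsity_A = 0_{AI}$. Thus $\catC(A,I)$ is an effect algebra whose underlying PCM is exactly the PCM obtained from its $\sigma$-PAM structure. Phrased in the language of Proposition~\ref{prop:effect-algebra-PAM-iff-complete}, the $\sigma$-PAM structure on $\catC(A,I)$ extends the PCM structure of the effect algebra $\catC(A,I)$.

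Having arranged the data into this form, I would apply Proposition~\ref{prop:effect-algebra-PAM-iff-complete} to $E = \catC(A,I)$: it guarantees that any effect algebra whose PCM structure is extended by a $\sigma$-PAM structure is automatically $\omega$-complete, hence a $\sigma$-effect algebra, and moreover that the given $\sigma$-PAM structure coincides with the canonical one on that $\sigma$-effect algebra. This immediately yields that $\catC(A,I)$ is a $\sigma$-effect algebra, as required, and also confirms that the countable sums coming from the $\sigma$-PAC enrichment agree with the suprema-based sums intrinsic to the $\sigma$-effect algebra.

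The only point requiring care—and the step I would treat as the main (if minor) obstacle—is the bookkeeping in the first two paragraphs: one must verify that the PCM underlying the effect-algebra structure supplied by condition (i) is literally the same PCM as the one induced by the $\sigma$-PAM enrichment, so that the hypothesis of Proposition~\ref{prop:effect-algebra-PAM-iff-complete} is genuinely met rather than merely plausible. This is immediate once one reads the phrase `hom-PCM' in condition (i) as referring to the PCM reduct of the $\sigma$-PAM homset, but it is worth stating explicitly so as not to conflate two a priori distinct additive structures on the same set.
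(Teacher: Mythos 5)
Your proof is correct and is exactly the paper's intended argument: the corollary follows immediately from combining axiom (i) of Definition~\ref{def:effectus} (the hom-PCM $\catC(A,I)$ is an effect algebra) with the $\sigma$-PAM enrichment of a $\sigma$-PAC, via Proposition~\ref{prop:effect-algebra-PAM-iff-complete}, which is why the paper states it with a bare \qed. Your closing remark about verifying that the effect-algebra PCM is literally the PCM reduct of the $\sigma$-PAM homset is the right point of care, and it is settled by the phrase `hom-PCM' in condition (i), just as you say.
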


The following, straightforwardly verifiable, lemma
establishes the equivalence of two possible notions of morphisms
of $\sigma$-effect algebras.

\begin{lemma}
\label{lem:sigma-additive-equiv-omega-conti}
Let $E,D$ be $\sigma$-effect algebras
and $f\colon E\to D$ an additive map.
Then $f$ is $\sigma$-additive
if and only if
it is \Define{$\omega$-continuous},
i.e.\ if it preserves suprema of increasing sequence
$a_0\le a_1\le \dotsb$.
\qed
\end{lemma}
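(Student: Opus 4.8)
The plan is to prove both implications by passing between increasing sequences and summable countable families via telescoping differences, exploiting the concrete description of countable sums in Definition~\ref{def:sigma-effect-algebra}. First I would record the elementary fact that any additive $f$ is monotone: if $a\le b$ then $b=a\ovee c$ for some $c$, whence $f(b)=f(a)\ovee f(c)\ge f(a)$.

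For the implication $\sigma$-additive $\Rightarrow$ $\omega$-continuous, take an increasing sequence $a_0\le a_1\le\dotsb$ with supremum $a$. I would set $b_0=a_0$ and let $b_n$ be the unique element with $a_{n-1}\ovee b_n=a_n$, so that each finite partial sum $\bigovee_{i=0}^{n} b_i$ equals $a_n$. The family $(b_i)_{i\in\N}$ is then summable, since every finite subset is contained in some $\set{0,\dotsc,N}$ and hence sums to an element below $a_N$. By the definition of countable sums together with a cofinality argument---the $a_N$ are cofinal among the finite partial sums of $(b_i)$---we obtain $\bigovee_i b_i=\bigvee_N a_N=a$. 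Applying $\sigma$-additivity and then finite additivity term by term gives $f(a)=\bigovee_i f(b_i)=\bigvee_N\bigovee_{i=0}^{N} f(b_i)=\bigvee_N f(a_N)$, which is exactly $\omega$-continuity.

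The converse runs the same bridge backwards. Given an $\omega$-continuous additive $f$ and a summable countable family $(x_j)_{j\in J}$, I would enumerate $J$ (the finite case being immediate from additivity) and form the partial sums $s_N=\bigovee_{i=0}^{N} x_i$, which are defined by summability and increase to $\bigovee_j x_j$ by the same cofinality argument. Finite additivity yields $f(s_N)=\bigovee_{i=0}^{N} f(x_i)$, so every finite partial sum of $(f(x_j))_{j}$ is defined, establishing summability of $(f(x_j))_{j}$ in $D$. Then $\omega$-continuity gives $f(\bigovee_j x_j)=f(\bigvee_N s_N)=\bigvee_N f(s_N)=\bigvee_N\bigovee_{i=0}^{N} f(x_i)=\bigovee_j f(x_j)$.

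I expect the only delicate point to be the cofinality argument identifying $\bigvee_F\bigovee_{j\in F}x_j$ (the supremum over all finite $F$ from Definition~\ref{def:sigma-effect-algebra}) with $\bigvee_N\bigovee_{i=0}^{N} x_i$ (the supremum along a fixed enumeration): this rests on the fact that each finite $F$ lies inside some initial segment and that the partial sums are monotone, so the two monotone families of upper bounds are mutually cofinal and thus share a supremum. Everything else is routine bookkeeping with finite additivity and monotonicity.
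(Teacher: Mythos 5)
Your proof is correct. The paper gives no proof of this lemma (it is marked as straightforwardly verifiable), and your argument---telescoping differences $b_n = a_n \ominus a_{n-1}$ to pass between increasing sequences and summable families, plus the cofinality identification of $\bigvee_F \bigovee_{j\in F} x_j$ with the supremum along initial segments---is exactly the standard verification, and indeed the same technique the paper itself uses in its proof of Proposition~\ref{prop:effect-algebra-PAM-iff-complete}.
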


\subsection{Effect monoids and modules}
The predicates of the unit object $I$ in a ($\sigma$-)effectus do not just form a ($\sigma$-)effect algebra. As they are the morphisms $s:I\rightarrow I$ they also have a `multiplication' operation given by composition of morphisms.
The resulting structure in the finitary case is known as an effect monoid
\cite{Jacobs2011,jacobs2015new}.
We introduce $\sigma$-effect monoids as the counterpart for the countable case.

\begin{definition}\label{def:effectmonoid}
    An \Define{effect monoid} (resp.~\Define{$\sigma$-effect monoid})
    is a ($\sigma$-)effect algebra
    $(M,\ovee, 0, 1)$
    with an associative binary (total) operation
    $\,\cdot\,\colon M\times M\to M$
    that is ($\sigma$-)biadditive
    and satisfies $a\cdot 1 = a = 1\cdot a$
    for all $a\in M$.
Given an effect monoid $M$ we define the \Define{opposite} effect
monoid $M^\opp$ as the same underlying effect algebra, but with the
product defined as $a\cdot'b \equiv b\cdot a$. Obviously $M$ is commutative
iff $M=M^\opp$.
\end{definition}
The monoids
in the symmetric monoidal category of ($\sigma$-)effect algebras with (unital) morphisms
and the
algebraic tensor product
are precisely the ($\sigma$-)effect monoids, hence the name~\cite{jacobs2012coreflections,Gudder1998}. 

The structure of $\omega$-complete effect monoids
has been studied in \cite{effectmonoids}.
It follows
from \cite[Theorem 43]{effectmonoids}
(with Lemma~\ref{lem:sigma-additive-equiv-omega-conti}) that
any $\omega$-complete effect monoid
is a $\sigma$-effect monoid
--- that is, the requirement of $\sigma$-biadditivity
of the multiplication may be weakened to biadditivity.

\begin{example}\label{ex:booleanalgebra}
In $\Pfn$ the scalars are $\{0,1\}$, and hence $\{0,1\}$ is a $\sigma$-effect monoid.
More generally, any Boolean algebra $(B,0,1,\wedge,\vee,(\ )^\perp)$ (being an effect algebra by Example~\ref{ex:Boolean-is-effect-algebra}),
    is an effect monoid with $a\cdot b \equiv a\wedge b$.
Therefore any $\omega$-complete Boolean algebra
is a $\sigma$-effect monoid.
\end{example}

\begin{example}\label{ex:CX}
  The scalars of $\Wstar^{\opp}$ is the real unit interval $[0,1]$,
  which is thus a $\sigma$-effect monoid
  with the usual multiplication and partial addition.
  More generally, let $X$ be a compact Hausdorff space. We denote its space of
    continuous functions into the complex numbers by
    $C(X)\equiv\set{f:X\rightarrow \C| f \text{ continuous}}$. This is
    a commutative unital C$^*$-algebra
    (and conversely by the Gel'fand theorem, any
    commutative C$^*$-algebra with unit is of this form).
    Its unit interval
    $[0,1]_{C(X)} = \set{f:X\rightarrow [0,1]| f \text{ continuous}}$
    is not just an effect algebra but an effect monoid
    (with multiplication defined pointwise).
    The effect monoid~$[0,1]_{C(X)}$ is $\omega$-complete
        (and thus a~$\sigma$-effect monoid)
        if and only if~$X$ is \Define{basically disconnected},
            i.e.~when every cozero set has open
            closure~\cite[1H \& 3N.5]{gillman2013rings}.
\end{example}
These examples of effect monoids are all commutative. In~\cite[Ex.~4.3.9]{Cho2019PhD} and~\cite[Cor.~51]{basmaster}
two different non-commutative effect monoids are constructed.

In the rest of this section,
we study the structures of predicates and substates.
In particular, it will be shown that
any ($\sigma$-)effect monoid can appear as the scalars of
a ($\sigma$-)effectus
(Propositions~\ref{prop:emod-effectus}
and~\ref{prop:wmod-effectus}).

For a monoid $M$,
an \Define{$M$-action} on a set $X$ is
a function ${\cdot}\colon M\times X\to X$
such that $1\cdot x=x$ and $(r\cdot s)\cdot x=r\cdot(s\cdot x)$
for all $r,s\in M$ and $x\in X$.
We will apply this definition to ($\sigma$-)effect monoids.

\begin{definition}
Let $M$ be a ($\sigma$-)effect monoid. 
A \Define{($\sigma$-)effect $M$-module}
is a ($\sigma$-)effect algebra 
$E$
equipped with a ($\sigma$-)biadditive 
$M$-action $\cdot \colon M\times E\to E$.
Explicitly, for example, the biadditivity means:
\[
(r\ovee s)\cdot a= r\cdot a\ovee s\cdot a
\qquad\qquad
r\cdot (a\ovee b)= r\cdot a\ovee r\cdot b
\qquad\qquad  0\cdot a= 0 = r\cdot 0
\]
for all $r,s\in M$ and $a,b\in E$ with $r\perp s$
and $a\perp b$.
We write $\EMod[M]$
(resp.~$\sEMod[M]$)
for the category of ($\sigma$-)effect $M$-modules
and ($\sigma$-)additive 
maps that preserve the $M$-action;
i.e.~$f(r\cdot x)=r\cdot f(x)$.
\end{definition}

\begin{example}
  If $\catC$ is a ($\sigma$-)effectus
  with scalars $M=\catC(I,I)$,
  the set $\Pred(A)$ of predicates
  on $A\in\catC$ is a ($\sigma$-)effect $M$-module,
  with $M$-action given by composition $r\cdot p=r\circ p$.
\end{example}

\begin{example}
    A ($\sigma$-)effect $\{0,1\}$-module
    is just a ($\sigma$-)effect algebra,
    as the $\set{0,1}$-action is trivial.
\end{example}

\begin{example}
    When $M$ is the real unit interval $[0,1]$, an effect $M$-module is precisely a \emph{convex effect algebra}~\cite{gudder1999convex}. These are effect algebras $E$ that are intervals $[0,u]_V$ of ordered vector spaces $V$ with a positive $u\in V$~\cite{gudder1998representation}.
We will come back to this in Section~\ref{sec:OUS}.
\end{example}

\begin{proposition}
\label{prop:emod-effectus}
Let $M$ be an effect monoid (resp.\ $\sigma$-effect monoid).
Then the opposite category
$\EMod[M]^{\opp}$
is an effectus
(resp.\ $\sEMod[M]^{\opp}$ is a $\sigma$-effectus)
with scalars $M$.
The unit object is $M$,
and coproducts are given by Cartesian products
with pointwise operations (which form products in
$\EMod[M]$ and $\sEMod[M]$).
\end{proposition}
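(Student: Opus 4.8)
The plan is to verify the finPAC (resp. $\sigma$-PAC) structure on $\EMod[M]^{\opp}$ first and then the three effectus axioms, reducing everything to pointwise computations in the effect modules. The organising observation I would use throughout is the identification $\EMod[M](M,A)\cong A$: since a morphism $f\colon M\to A$ preserves the action, $f(m)=f(m\cdot 1)=m\cdot f(1)$, so $f$ is determined by $a\coloneqq f(1)\le 1$, and conversely every $a\in A$ yields a morphism $m\mapsto m\cdot a$ by biadditivity of the action. In particular the hom-PCM $\EMod[M]^{\opp}(A,M)=\EMod[M](M,A)\cong A$ is (isomorphic to) the effect algebra $A$ itself, with top $\truth_A$ corresponding to $1_A$ and bottom $\falsity_A$ to $0_A$; this gives effectus axiom~(i) immediately, and it makes $M=\EMod[M]^{\opp}(M,M)$ the scalars. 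The opposite is taken precisely so that composition of scalars reproduces the product of $M$ rather than that of $M^{\opp}$.

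I would then construct the coproducts. Cartesian products $\prod_j A_j$ with pointwise operations and pointwise action are effect $M$-modules (and $\sigma$-effect modules, since increasing sequences have coordinatewise suprema), and the tupling map $\tup{g_j}$ is the unique mediating morphism; hence they are products in $\EMod[M]$ and so finite (resp. countable) coproducts in the opposite category. I would record that the partial projection $\pproj_i\colon\coprod_j A_j\to A_i$ in $\EMod[M]^{\opp}$ is the insertion $a\mapsto(0,\dots,a,\dots,0)$ regarded as a morphism in $\EMod[M]$.

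The enrichment is where I expect the main technical weight to lie. I would equip each homset $\EMod[M]^{\opp}(A,B)=\EMod[M](B,A)$ with a pointwise partial sum, declaring $(f_j)_j$ summable exactly when every finite subfamily of $(f_j(1_B))_j$ is dominated by $1_A$, and setting $(\bigovee_j f_j)(b)=\bigovee_j f_j(b)$. The key lemma is that this pointwise sum is again a module morphism: subunitality and additivity are clear, action-preservation in the countable case uses $\sigma$-biadditivity of the action, and the countable sums themselves exist by the $\sigma$-effect-algebra structure of $A$ (Definition~\ref{def:sigma-effect-algebra}). Using the standard effect-algebra fact that $a_i\le b_i$ with $(b_i)$ summable implies $(a_i)$ summable, I would show that summability may be tested at $1_B$ alone, which is exactly what lets the three PCM (resp. $\sigma$-PAM) axioms reduce coordinatewise to those of $A$. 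Biadditivity of composition then follows because post-composition with a fixed ($\sigma$-)additive morphism commutes with pointwise sums, while pre-composition is pointwise by construction.

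Finally I would dispatch the remaining finPAC/$\sigma$-PAC and effectus axioms, all of which become short pointwise statements. For the compatible-sum axiom a compatible family $(f_j\colon A\to B)_j$ is witnessed by some $f$ with $f_j=f\circ\pproj_j$; since the insertions $\pproj_j(1_B)$ are summable with sum $\le 1$, additivity and subunitality of $f$ give $\bigovee_{j\in F}f_j(1_B)=f\paren*{\bigovee_{j\in F}\pproj_j(1_B)}\le 1$ for every finite $F$, so the family is summable. The untying axiom is immediate, since $\kappa_1\circ f$ and $\kappa_2\circ g$ evaluate at the top to $f(1_B)$ and $g(1_B)$, whose orthogonality is exactly $f\perp g$. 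For the effectus axioms I would compute $\truth_B\circ f=f(1_B)$ as an element of $\catC(A,M)\cong A$; then (ii) follows because $f(1_B)=0$ forces $f(b)\le f(1_B)=0$ for all $b\le 1_B$ by monotonicity, and (iii) holds because both it and $f\perp g$ assert $f(1_B)\perp g(1_B)$. The only genuine obstacle is the well-definedness and axiom-checking of the pointwise ($\sigma$-)PAM structure on homsets; once that is in place, the effectus axioms are essentially trivial.
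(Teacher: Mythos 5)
Your proof is correct, and every step you flag does go through; but it is organized quite differently from the paper's. The paper proves this proposition by citation (Cho's thesis) for the finitary case and, for the $\sigma$-case, via Proposition~\ref{prop:sEMod-sigma-effectus} in Appendix~\ref{appendix:proofs-sigma-effectus}, which routes through a general characterization lemma (Lemma~\ref{lem:charact-sigma-effectus}): a category with countable coproducts, zero morphisms, jointly monic partial projections, a compatibility-closure property and suitable truth maps is automatically a $\sigma$-effectus, with the partial sums \emph{induced} by the coproducts, $\bigovee_j f_j \coloneqq \nabla\circ f$ for $f$ the map witnessing compatibility. The verification for $\sEMod[M]^{\opp}$ then consists of eight categorical conditions, the crucial one being exactly your key observation, but in characterization form: a family $(f_j)$ is compatible iff $(f_j(1))$ is summable. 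You instead take ``summable iff summable at $1$'' as the \emph{definition} of a pointwise enrichment (your phrase ``dominated by $1_A$'' should read ``summable in $A$'', which is what your subsequent use of the domination fact presupposes) and then verify the $\sigma$-PAM axioms, biadditivity of composition, the compatible-sum and untying axioms, and effectus axioms (i)--(iii) by hand. The paper's route buys modularity: the enrichment axioms are never checked pointwise (they are absorbed into the lemma, whose proof is deferred to Cho's thesis), and the same lemma is reused verbatim for $\sWMod[M]$ in Proposition~\ref{prop:sWMod-sigma-effectus}. Your route buys self-containedness and uniformity: a single argument covers both the finitary and $\sigma$-additive statements, and you never need joint monicity of the partial projections or the truth-cotuple condition, which are artifacts of the lemma's formulation. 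The computational core --- the identification $\EMod[M](M,A)\cong A$ via $f\mapsto f(1)$, Cartesian products as coproducts, and summability detected at the unit --- is identical in both.
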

\begin{proof}
See \cite[Proposition~3.4.10]{Cho2019PhD}
for the case of effect monoids.
We prove the result for $\sigma$-effect monoids
in Proposition~\ref{prop:sEMod-sigma-effectus}
in Appendix~\ref{appendix:proofs-sigma-effectus}.
\end{proof}

This allows us to describe the assignment
of predicates to each object as a morphism of effectuses.

\begin{proposition}
Let $\catC$ be an effectus
(resp.\ $\sigma$-effectus) with scalars $M=\catC(I,I)$.
Then the assignment $A\mapsto \Pred(A)$
induces a morphism of effectuses $\Pred\colon \catC\to\EMod[M]^{\opp}$
(resp.\ morphism of $\sigma$-effectuses
$\Pred\colon \catC\to\sEMod[M]^{\opp}$).
\end{proposition}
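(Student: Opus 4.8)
The plan is to define the functor $\Pred$ by precomposition and then check the three defining properties of a morphism of ($\sigma$-)effectuses: functoriality, preservation of the unit, and preservation of coproducts. On objects set $\Pred(A)=\catC(A,I)$, which is a ($\sigma$-)effect $M$-module with action $r\cdot p=r\circ p$ by the example preceding Proposition~\ref{prop:emod-effectus}. On a morphism $f\colon A\to B$ put $\Pred(f)(q)=q\circ f$ for $q\in\Pred(B)=\catC(B,I)$; note the variance, so that $\Pred(f)$ is a map $\Pred(B)\to\Pred(A)$ in $\EMod[M]$, i.e.\ a morphism $\Pred(A)\to\Pred(B)$ in $\EMod[M]^{\opp}$, as required. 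That $\Pred(f)$ is a morphism of ($\sigma$-)effect $M$-modules is routine: ($\sigma$-)additivity of $q\mapsto q\circ f$ is precisely ($\sigma$-)biadditivity of composition in its first argument, while $M$-linearity, $\Pred(f)(r\cdot q)=(r\circ q)\circ f=r\circ(q\circ f)=r\cdot\Pred(f)(q)$, is associativity of composition. Functoriality likewise follows from associativity and unitality of composition.

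For the unit, I would note that $\Pred(I)=\catC(I,I)=M$ and that the $M$-action $r\cdot s=r\circ s$ on it is exactly the effect-monoid multiplication, so $\Pred(I)$ is the unit object $M$ of $\EMod[M]^{\opp}$ on the nose and I may take $u=\id_M$. It then suffices to verify $\Pred(\truth_A)=\truth_{\Pred(A)}$. Unwinding the effectus structure of $\EMod[M]^{\opp}$ from Proposition~\ref{prop:emod-effectus}, the truth map $\truth_E\colon M\to E$ in $\EMod[M]$ sends $s\mapsto s\cdot 1_E$; since the top element $1_{\Pred(A)}$ is $\truth_A$ we get $\truth_{\Pred(A)}(s)=s\cdot\truth_A=s\circ\truth_A=\Pred(\truth_A)(s)$.

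The substantial part is preservation of coproducts. As $\Pred$ is contravariant this means that $\Pred(\coprod_j A_j)$ together with the maps $\Pred(\kappa_j)$ is a product in $\EMod[M]$; equivalently, writing $\prod_j\Pred(A_j)$ for the pointwise product (which is the $\EMod[M]$-product by Proposition~\ref{prop:emod-effectus}), the comparison map
\[
\Phi\colon \Pred(\textstyle\coprod_j A_j)\longrightarrow \prod_j\Pred(A_j),\qquad h\longmapsto (h\circ\kappa_j)_j,
\]
is an isomorphism. It is a bijection by the universal property of the coproduct $\coprod_j A_j$ applied to maps into $I$, its inverse being cotupling $(q_j)_j\mapsto[q_j]_j$; and it is a homomorphism by the computations of the first paragraph. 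The crux is that $\Phi$ reflects summability, so that its inverse is again a homomorphism.

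I expect this reflection property to be the main obstacle, as it is where the effectus axioms genuinely enter. Concretely I must show that $p\circ\kappa_j\perp p'\circ\kappa_j$ for all $j$ implies $p\perp p'$, and I would argue that $\{p,p'\}$ is compatible and then invoke the compatible-sum axiom. For each $j$, the untying axiom turns $p\circ\kappa_j\perp p'\circ\kappa_j$ into summability of $\kappa_1\circ(p\circ\kappa_j)$ and $\kappa_2\circ(p'\circ\kappa_j)$; their sum $g_j\colon A_j\to I+I$ satisfies $\pproj_1\circ g_j=p\circ\kappa_j$ and $\pproj_2\circ g_j=p'\circ\kappa_j$. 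Cotupling yields $g\colon\coprod_j A_j\to I+I$ with $\pproj_1\circ g=p$ and $\pproj_2\circ g=p'$ (both sides agree after each $\kappa_j$, so they are equal by uniqueness), witnessing compatibility of $\{p,p'\}$, hence $p\perp p'$. Since $\Phi$ reflects binary sums its inverse is additive, and it is $M$-linear automatically, so $\Phi$ is an isomorphism in $\EMod[M]$, settling the finitary case. In the $\sigma$-case $\Phi$ is in addition $\sigma$-additive, and as an order-isomorphism its inverse preserves suprema of increasing sequences, hence is $\omega$-continuous and so $\sigma$-additive by Lemma~\ref{lem:sigma-additive-equiv-omega-conti}; thus $\Phi$ is an isomorphism in $\sEMod[M]$.
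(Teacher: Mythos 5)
Your proposal is correct and takes essentially the same approach as the paper's own proof (Proposition~\ref{prop:pred-functor-sigma-effectus} in Appendix~\ref{appendix:proofs-sigma-effectus}): the functor is precomposition, unit preservation is the computation $\Pred(I)=\catC(I,I)=M$ and $\Pred(\truth_A)=\truth_{\Pred(A)}$, and coproduct preservation reduces to showing that the hom-set bijection $\catC(\coprod_{\lambda} A_{\lambda},I)\cong\prod_{\lambda}\catC(A_{\lambda},I)$ is an isomorphism in $\sEMod[M]$. The paper declares that last verification ``easy to see''; your untying/compatible-sum argument for reflection of summability, and the order-isomorphism argument (via Lemma~\ref{lem:sigma-additive-equiv-omega-conti}) for $\sigma$-additivity of the inverse, correctly supply exactly the details the paper omits.
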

\begin{proof}
See \cite[Lemma~4.2.11]{Cho2019PhD}
for the case of effectuses.
We prove the result for $\sigma$-effectuses
in Proposition \ref{prop:pred-functor-sigma-effectus}
in Appendix \ref{appendix:proofs-sigma-effectus}.
\end{proof}
This mapping from objects to their predicate spaces is the effectus-analogue of the commonly used identification in GPTs of identifying a system with its vector space of effects. Of course, in GPTs we can also identify a system with the vector space of states, this also has an analogue in effectus theory.

The usual approach in effectus theory is to focus on the sets of states,
    which form (abstract) $M$-convex
    sets; see e.g.~\cite{cho2015introduction,jacobs2015new,basthesis}.
However, here we focus on
    the sets of substates
    and axiomatize their structure as
    \emph{($\sigma$-)weight $M$-modules}.
    This is not just natural in the setting of effectuses
    in partial form, but also has the advantage
    that we can avoid technical problems with
    convex sets, see Remark~\ref{remark-convex} below.

\begin{definition}
\label{def:wpmod}
Let $M$ be a ($\sigma$-)effect monoid. 
A \Define{($\sigma$-)weight $M$-module}
is a PCM (resp.\ $\sigma$-PAM) $X$ equipped with
a ($\sigma$-)biadditive 
$M$-action $\cdot\colon M\times X\to X$
and a function $\weight{-}\colon X\to M$,
called the \Define{weight}, such that
\begin{itemize}
\item $\weight{-}\colon X\to M$ is ($\sigma$-)additive 
and preserves the $M$-action,
i.e.\ $\weight{rx}=r\weight{x}$;
\item $\weight{x} = 0$ implies $x=0$;
\item $\weight{x} \perp \weight{y}$ implies $x \perp y$
(resp.\ countable families $(x_j)_{j\in J}$ are summable
when $(\,\weight{x_j}\,)_{j\in J}$ is summable).
\end{itemize}
A function $f\colon X\to Y$
between ($\sigma$-)weight $M$-modules
is \Define{weight-preserving}
if $\weight{f(x)}=\weight{x}$
for all $x\in X$,
and
\Define{weight-decreasing}
if
$\weight{f(x)}\le\weight{x}$
for all $x\in X$.
We denote by
$\WMod[M]$
(resp.\ $\sWMod[M]$)
the category of
($\sigma$-)weight $M$-modules
and weight-decreasing ($\sigma$-)additive
maps that preserves the $M$-action.
\end{definition}

\begin{example}
  If $\catC$ is a ($\sigma$-)effectus
  with scalars $M=\catC(I,I)$,
  the set $\sSt(A)$ of substates
  on $A\in\catC$ is a ($\sigma$-)weight $M^{\opp}$-module,
  with $M^{\opp}$-action given by composition
  (from the right) $r\cdot \omega=\omega\circ r$,
  and weight $\weight{\omega}=\truth\circ \omega$.
  Note that states are precisely
  elements $\omega\in\sSt(A)$ with weight $1$.
\end{example}

For a weight $M$-module $X$,
let $B(X)=\set{x\in X;\ \weight{x}=1 }$
be the set of elements with weight $1$.
The set $B(X)$ is closed under `$M$-convex sums',
i.e.\ $\bigovee^n_{i=1} r_i x_i \in B(X)$
    for~$x_i \in X$ and~$r_i \in M$ with~$\bigovee_i r_i = 1$.
This makes $B(X)$ into an $M$-convex set
\cite[\S\,3.6]{Cho2019PhD}.
In particular, the
states $\St(A)=B(\sSt(A))$ in an effectus
form an $M$-convex set.
In this way, our treatment of
substates subsumes the usual treatment of states
in terms of convex sets.
If $M$ is `well-behaved',
such as when $M=[0,1]$,
the category of $M$-convex sets
is equivalent to
the category of weight $M$-modules
and \emph{weight-preserving} maps
\cite[Proposition 4.4.10]{Cho2019PhD}.

\begin{example}
\label{ex:weight-module-bool}
Both weight $\set{0,1}$-modules
and $\sigma$-weight $\set{0,1}$-modules
are precisely \Define{pointed sets},
i.e.\ sets $X$ equipped with a distinguished element $x_0\in X$.
Every ($\sigma$-)weight $\set{0,1}$-module $X$ is
a pointed set $(X,0)$, and the converse is also true.
This is because in a ($\sigma$-)weight $\set{0,1}$-module,
all nonzero elements have weight $1$ and thus they
cannot be summable with nonzero elements.
This yields isomorphisms of categories
$\WMod[\set{0,1}]\cong \sWMod[\set{0,1}]\cong \pSet$,
where $\pSet$ denotes the category of pointed sets
and functions that preserves the distinguished element.
\end{example}

\begin{proposition}
\label{prop:wmod-effectus}
Let $M$ be an effect monoid (resp.\ $\sigma$-effect monoid).
Then the category
$\WMod[M]$
is an effectus
(resp.\ $\sWMod[M]$ is a $\sigma$-effectus)
with scalars $M$.
The unit object is $M$ and coproducts are given by
$\coprod_{\lambda\in \Lambda} X_{\lambda}
=
\set{(x_{\lambda})_{\lambda}\in \prod_{\lambda\in \Lambda} X_{\lambda}
; (\,\weight{x_{\lambda}}\,)_{\lambda\in\Lambda} \text{ is summable in $M$}}
$ for finite or countable $\Lambda$.
\end{proposition}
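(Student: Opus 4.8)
The plan is to verify directly that $\WMod[M]$ (resp.\ $\sWMod[M]$) satisfies all the axioms of a finPAC (resp.\ $\sigma$-PAC) together with conditions (i)--(iii) of Definition~\ref{def:effectus}, doing everything \emph{pointwise} and controlling the weights throughout. The first step is the enrichment. For parallel maps I would declare a family $(f_j\colon X\to Y)_{j\in J}$ summable exactly when, for every $x\in X$, the family $(\weight{f_j(x)})_{j\in J}$ is summable in $M$ with $\bigovee_{j}\weight{f_j(x)}\le\weight{x}$, and then set $(\bigovee_j f_j)(x)=\bigovee_j f_j(x)$. The global bound $\le\weight{x}$ does double duty: by the last axiom of weight modules it guarantees $(f_j(x))_j$ is genuinely summable in $Y$, and it is exactly what makes the pointwise sum weight-\emph{decreasing}, hence a legitimate morphism; additivity and $M$-equivariance of $\bigovee_j f_j$ then follow from biadditivity of $\ovee$ and of the action. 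That this makes each homset a PCM (resp.\ $\sigma$-PAM) is inherited pointwise from $Y$ and $M$, the only care being that the bound $\le\weight{x}$ is preserved under partition-associativity and the limit axiom; in the $\sigma$-case this is where $\omega$-completeness of $M$ enters, since $\bigovee_j\weight{f_j(x)}=\bigvee_F\bigovee_{j\in F}\weight{f_j(x)}\le\weight{x}$ whenever each finite partial sum is $\le\weight{x}$. Biadditivity of composition is then immediate from additivity of the maps.

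Next I would treat coproducts. Give $\coprod_\lambda X_\lambda$ the pointwise PCM/$\sigma$-PAM and $M$-action and the weight $\weight{(x_\lambda)_\lambda}=\bigovee_\lambda\weight{x_\lambda}$ (defined precisely by the summability constraint in the displayed formula), and check it is again a weight module. The coprojections $\kappa_\mu$ send $x$ to the tuple that is $x$ in coordinate $\mu$ and $0$ elsewhere, and are weight-preserving. For the universal property, given $(f_\lambda\colon X_\lambda\to Y)_\lambda$ set $f((x_\lambda)_\lambda)=\bigovee_\lambda f_\lambda(x_\lambda)$; this is well defined because $\weight{f_\lambda(x_\lambda)}\le\weight{x_\lambda}$ makes $(\weight{f_\lambda(x_\lambda)})_\lambda$ summable (dominated by a summable family) with sum $\le\weight{(x_\lambda)_\lambda}$, so $f$ is a morphism, and $\pproj_\mu\after f=f_\mu$ is clear; uniqueness follows by writing $(x_\lambda)_\lambda=\bigovee_\lambda\kappa_\lambda(x_\lambda)$ and using additivity. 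The partial projections $\pproj_\mu$ are the coordinate maps, and one reads off the two PAC axioms: a family compatible via $f\colon A\to\coprod_j B$ has $\weight{f_j(a)}=\weight{(\pproj_j\after f)(a)}$ summing to $\weight{f(a)}\le\weight{a}$, hence is summable in the above sense, while for summable $f,g$ the maps $\kappa_1\after f,\kappa_2\after g$ have weights $\weight{f(a)}$ and $\weight{g(a)}$ whose sum stays $\le\weight{a}$, so they are summable into $B+B$.

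It remains to fix the unit and check (i)--(iii). Take $I=M$ as a weight module over itself with $\weight{-}=\id_M$ and the regular action. Then $\truth_X\colon X\to M$ is the weight map $\weight{-}$ itself (additive, equivariant, trivially weight-decreasing) and $\falsity_X=0$. To see that $\Pred(X)=\WMod[M](X,M)$ is an effect algebra I would define $p^\bot$ pointwise as the orthosupplement of $p(x)$ inside $[0,\weight{x}]_M$, i.e.\ $p^\bot(x)=\weight{x}\ominus p(x)$, verify from the effect-module arithmetic of $M$ that $p^\bot$ is again additive, equivariant and weight-decreasing, and note $p\ovee p^\bot=\truth_X$; uniqueness and the law $p\perp\truth_X\Rightarrow p=0$ are immediate from the pointwise bound $p(x)\ovee\weight{x}\le\weight{x}$. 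Condition (ii) is the second weight-module axiom ($\weight{f(x)}=0$ for all $x$ forces $f=0$), and condition (iii) translates, through $\truth\after f=\weight{f(-)}$, into exactly the summability criterion chosen for homsets. Finally the scalars $\WMod[M](M,M)$ are identified with $M$ via evaluation at $1$, where equivariance forces $s(r)=r\cdot s(1)$, yielding the asserted effect monoid of scalars.

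The bookkeeping is routine, but the genuine obstacle, and the part I would write out most carefully, is the interplay between the \emph{pointwise} summability in the coordinates/arguments and the single \emph{global} weight bound $\le\weight{x}$: one must show this bound is stable under the partition-associativity and limit axioms so that the homsets are honestly $\sigma$-PAMs, and simultaneously that it is precisely equivalent to landing among the weight-decreasing morphisms. This is exactly where $\omega$-completeness of the $\sigma$-effect monoid $M$ (hence its $\sigma$-effect-algebra structure from Proposition~\ref{prop:effect-algebra-PAM-iff-complete}) is indispensable, and it is the step with no counterpart in the finitary case.
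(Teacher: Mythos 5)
Your proposal is correct, and its key computations are exactly those of the paper's proof (Proposition~\ref{prop:sWMod-sigma-effectus} in the appendix): the same coproduct construction, the same weight-bound criterion ($\bigovee_j\weight{f_j(x)}$ defined and $\le\weight{x}$ for all $x$), the truth map $\truth_X=\weight{-}$, the pointwise orthosupplement $p^\bot(x)=\weight{x}\ominus p(x)$, and the same observation that $\omega$-completeness of $M$ is what the countable case needs beyond the finitary one. The proof architecture differs, however. You equip the homsets with a partial addition by hand and then verify the $\sigma$-PAM, $\sigma$-PAC, and effectus axioms directly. The paper instead routes everything through a characterization lemma (Lemma~\ref{lem:charact-sigma-effectus}): a category with countable coproducts, zero maps, jointly monic partial projections $\pproj_j\colon J\copow A\to A$, and a handful of conditions on truth maps and compatibility is automatically a $\sigma$-effectus, with the sum of a compatible family \emph{defined} as $\nabla\circ f$ for the witnessing map $f\colon A\to J\copow B$. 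The paper then only has to verify those conditions for $\sWMod[M]$ --- notably joint monicity of the coordinate projections and the same weight-bound characterization, phrased there as a criterion for \emph{compatibility} rather than summability --- while the derivation of the $\sigma$-PAM/$\sigma$-PAC axioms is discharged once inside the lemma (with details deferred to Cho's thesis). Your direct approach buys self-containedness; the paper's buys modularity, since the same lemma is reused to prove that $\sEMod[M]^{\opp}$ is a $\sigma$-effectus. One point you should make explicit in the direct route: closure of your summability notion under post-composition (needed for $\sigma$-biadditivity of $\circ$) requires the fact that in a $\sigma$-effect algebra a family dominated termwise by a summable family is itself summable --- you invoke this domination argument for the coproduct cotuple, but it is equally needed for the enrichment, where your proposal passes over it with ``immediate from additivity of the maps.''
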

\begin{proof}
See \cite[Proposition~3.5.9]{Cho2019PhD}
for the case of effect monoids.
We prove the case of $\sigma$-effect monoids
in Proposition~\ref{prop:sWMod-sigma-effectus}
in Appendix~\ref{appendix:proofs-sigma-effectus}.
\end{proof}

\begin{proposition}
Let $\catC$ be a ($\sigma$-)effectus
with scalars $M=\catC(I,I)$.
The assignment $A\mapsto \sSt(A)$
induces a morphism of effectuses $\sSt\colon \catC\to
\WMod[M^{\opp}]$
(resp.\ morphism of $\sigma$-effectuses $\sSt\colon \catC\to
    \sWMod[M^{\opp}]$.)
\end{proposition}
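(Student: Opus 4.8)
The plan is to show that the assignment $A \mapsto \sSt(A) = \catC(I,A)$ extends to a functor into $\sWMod[M^{\opp}]$ and that this functor preserves countable coproducts and the unit, where $M = \catC(I,I)$ is the scalar $\sigma$-effect monoid. The object part is already handled by the earlier example identifying $\sSt(A)$ as a $\sigma$-weight $M^{\opp}$-module. So the first task is to define the action on morphisms: for $f \colon A \to B$ in $\catC$, I would set $\sSt(f) = f \after (-) \colon \catC(I,A) \to \catC(I,B)$, i.e.\ postcomposition $\omega \mapsto f \after \omega$. I then need to verify this is a morphism in $\sWMod[M^{\opp}]$, which requires checking three things: that it is $\sigma$-additive, that it preserves the $M^{\opp}$-action, and that it is weight-decreasing.

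The $\sigma$-additivity of $f \after (-)$ is immediate from the fact that $\catC$ is enriched over $\sigma$-PAMs, so composition is $\sigma$-biadditive and in particular postcomposition by a fixed $f$ is $\sigma$-additive. Preservation of the $M^{\opp}$-action is the computation $f \after (\omega \after r) = (f \after \omega) \after r$, which is just associativity of composition (recall the $M^{\opp}$-action is $r \cdot \omega = \omega \after r$, precomposition by the scalar $r \colon I \to I$). The weight-decreasing condition is the one genuinely using the effectus axioms: I must show $\weight{f \after \omega} \le \weight{\omega}$, i.e.\ $\truth_B \after f \after \omega \le \truth_A \after \omega$ in $M = \catC(I,I)$. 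This follows because $\truth_B \after f \le \truth_A$ as predicates on $A$ (every morphism is subunital, so $\truth_B \after f$ is a predicate bounded by the top predicate $\truth_A$), and then precomposing with $\omega$ preserves this inequality by monotonicity of composition, which holds in any PCM-enriched category since $a \le b$ means $b = a \ovee c$ and composition is additive.

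Functoriality (preservation of identities and composition) is then routine from associativity and unitality of composition in $\catC$. The remaining and most substantial part is verifying that $\sSt$ is a morphism of $\sigma$-effectuses, namely that it preserves countable coproducts and the unit. For the unit I would use that $\sSt(I) = \catC(I,I) = M$, which is exactly the unit object of $\sWMod[M^{\opp}]$ described in Proposition~\ref{prop:wmod-effectus}, and exhibit the requisite unit isomorphism. For coproduct preservation, given a countable coproduct $\coprod_{j} A_j$ in $\catC$ with coprojections $\kappa_j$ and partial projections $\pproj_j$, I would show that the canonical comparison map $\catC(I, \coprod_j A_j) \to \coprod_j \catC(I, A_j)$ sending $\omega$ to the family $(\pproj_j \after \omega)_j$ is an isomorphism of $\sigma$-weight modules. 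Recalling that the coproduct in $\sWMod[M^{\opp}]$ is the subset of the product consisting of families whose weights are summable, the key verifications are that $(\pproj_j \after \omega)_j$ indeed lands in this coproduct (the weights $\truth \after \pproj_j \after \omega$ must be summable, which should follow from the $\sigma$-additive structure and the characterization of $\truth$ on a coproduct as the sum of $\truth \after \pproj_j$), and that the map is bijective. I expect the main obstacle to be this coproduct-preservation step: establishing that the tupling $\omega \mapsto (\pproj_j \after \omega)_j$ is both well-defined into the summable families and a bijection will require careful use of the compatible sum and untying axioms together with the defining property $\bigovee_j \kappa_j \after \pproj_j = \id$ of coproducts in a $\sigma$-PAC. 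Since the corresponding statement for ordinary effectuses is already established in \cite[Lemma~4.2.11]{Cho2019PhD}, the proof proper can defer these details to the appendix, following the same pattern as the preceding predicate-functor proposition.
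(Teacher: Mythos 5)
Your proposal is correct and follows essentially the same route as the paper: define $\sSt$ on morphisms by postcomposition, note the enrichment handles $\sigma$-additivity and the $M^{\opp}$-action, check weight-decrease via $\truth_B \after f \le \truth_A$, and reduce coproduct preservation to the bijection $\catC(I,\coprod_j A_j) \cong \{(\omega_j)_j : (\weight{\omega_j})_j \text{ summable}\}$ given by $\omega \mapsto (\pproj_j \after \omega)_j$. The paper packages this last bijection as a standalone lemma (the countable analogue of the finite decomposition lemma, proved the same way) and, like you, defers its verification; the only minor imprecision in your sketch is that the summability of $(\kappa_j \after \omega_j)_j$ rests chiefly on effectus axiom (iii) combined with the limit axiom, rather than on the compatible-sum and untying axioms you name.
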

\begin{proof}
See \cite[Lemma~4.2.11]{Cho2019PhD}
for the case of effectuses.
We prove the result for $\sigma$-effectuses
in Proposition \ref{prop:sst-functor-sigma-effectus}
in Appendix \ref{appendix:proofs-sigma-effectus}.
\end{proof}

\begin{remark}\label{remark-convex}
Similar results to the previous two hold
for $M$-convex sets and states in an effectus
under certain additional assumptions
on the effect monoid $M$ and on the effectus;
see \cite[Corollary 4.4.15 and Proposition 4.5.11]{Cho2019PhD}
and \cite[\S\,3.2.4]{basthesis}.
However, it is an open question
whether the results hold in general.
\end{remark}

\section{Separation properties and normalization}\label{sec:separation}

The definition of a ($\sigma$-)effectus is quite weak. It will
therefore be useful to consider some additional structure that an
effectus might have.
The first structure we consider is based on the notion of `operational
equivalence' used in GPTs
(cf.\ \cite[\S\,2.2]{ChiribellaDP2015}).
This basically says that if two
transformations act the same on all effects or substates that they
must be the same transformations, since they are operationally
indistinguishable.

\begin{definition}
A ($\sigma$-)effectus is \Define{predicate-separated}
when any pair of morphisms $f,g\colon A\to B$
satisfy $f=g$ whenever $p\circ f=p\circ g$
for all $p\in\Pred(B)$.
It is \Define{substate-separated}
when any pair of morphisms $f,g\colon A\to B$
satisfy $f=g$ whenever $f\circ\omega=g\circ \omega$
for all substates $\omega\in\sSt(A)$.
\end{definition}

The following is an immediate consequence
from the definition,
which will be used in
Section \ref{sec:classification}.

\begin{proposition}\label{prop:separation-faithful}
A $\sigma$-effectus $\catC$ is predicate-separated if and only if
the morphism of $\sigma$-effectuses
$\Pred\colon \catC\to \sEMod[M]^{\opp}$
is faithful (as a functor).
It is substate-separated if and only if
the morphism of $\sigma$-effectuses
$\sSt\colon \catC\to \sWMod[M^{\opp}]$
is faithful.\qed
\end{proposition}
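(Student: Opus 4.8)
The plan is to observe that the statement is a direct unwinding of the definition of faithfulness against the explicit action of the functors $\Pred$ and $\sSt$ on morphisms. Recall that a functor is faithful precisely when it is injective on each hom-set. So I first need to spell out what $\Pred$ and $\sSt$ do to a morphism, and then compare the resulting injectivity condition with the two separation properties.

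For the predicate case, I would record that the functor $\Pred\colon\catC\to\sEMod[M]^{\opp}$ sends an object $A$ to $\Pred(A)=\catC(A,I)$ and a morphism $f\colon A\to B$ to the precomposition map $p\mapsto p\circ f$, which is a morphism $\Pred(B)\to\Pred(A)$ in $\sEMod[M]$, i.e.\ a morphism $\Pred(A)\to\Pred(B)$ in $\sEMod[M]^{\opp}$ (this is exactly the action used in the construction of $\Pred$ as a morphism of $\sigma$-effectuses). Since equality of morphisms in the opposite category is equality of the underlying maps, $\Pred(f)=\Pred(g)$ holds if and only if $p\circ f=p\circ g$ for every $p\in\Pred(B)$. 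Therefore $\Pred$ is injective on $\catC(A,B)$ if and only if, for all $f,g\colon A\to B$, the condition $p\circ f=p\circ g$ for all $p\in\Pred(B)$ forces $f=g$. Quantifying over all $A,B$, this is verbatim the definition of predicate-separation, giving the first equivalence.

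The substate case is entirely analogous but covariant: $\sSt\colon\catC\to\sWMod[M^{\opp}]$ sends $f\colon A\to B$ to the postcomposition map $\omega\mapsto f\circ\omega$ from $\sSt(A)=\catC(I,A)$ to $\sSt(B)=\catC(I,B)$. Hence $\sSt(f)=\sSt(g)$ if and only if $f\circ\omega=g\circ\omega$ for all $\omega\in\sSt(A)$, and injectivity of $\sSt$ on each hom-set is exactly substate-separation. The only point requiring care is the bookkeeping of the variance conventions---$\Pred$ is contravariant and lands in an opposite category while $\sSt$ is covariant---so that the direction of the separating family (predicates on the codomain $B$ versus substates on the domain $A$) lines up correctly with the respective separation property; once this is tracked, no further argument is needed, which is why the result is an immediate consequence of the definitions.
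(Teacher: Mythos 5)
Your proposal is correct and matches the paper's treatment: the paper marks this proposition as an immediate consequence of the definitions (it carries a \qed with no written proof), and your argument is precisely that unwinding---faithfulness as hom-set injectivity, $\Pred(f)$ as precomposition, $\sSt(f)$ as postcomposition, with the variance bookkeeping handled correctly.
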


Hence, a $\sigma$-effectus satisfying one of
the separation properties can be seen as a `sub-$\sigma$-effectus'
of the $\sigma$-effectus of $\sigma$-effect modules or of
$\sigma$-weight modules.
One could argue that it would be more natural to 
assume state separation, instead of substate separation.
An effectus is \Define{state-separated}
if for any pair of morphisms $f,g\colon A\to B$
we have $f=g$ whenever $f\circ\omega=g\circ \omega$
for all states $\omega\in\St(A)$.
This however turns out to be equivalent to 
substate separation when the next condition 
we introduce is satisfied.

A second property that is usually assumed
(often implicitly) in a GPT is the
possibility of normalizing states
(cf.\ \cite[\S\,4.1.4]{ChiribellaDP2015},
\cite[\S\,5.4.1]{DArianoCP2017}). A `normalized' state $\omega$
is one that has unit probability when the deterministic effect (`always true') is
tested against it: $\truth\circ \omega = 1$. An `unnormalized'
substate can then be interpreted as one that has a probability of
failure at being prepared: $\truth\circ \omega < 1$. Being able to
normalize a state recognizes the possibility of deterministically 
preparing any state that can be probabilistically prepared.

\begin{definition}
A ($\sigma$-)effectus admits \Define{normalization} if
for each nonzero substate $\omega\colon I\to A$,
there exists a unique state
$\bar{\omega}\colon I\to A$ such that
$\omega=\bar{\omega}\circ
(\truth \circ \omega)$.
\end{definition}

\begin{proposition}
\label{prop:state-sep-equiv-substate-sep}
A ($\sigma$-)effectus with normalization
is state-separated if and only if it is substate-separated.
\end{proposition}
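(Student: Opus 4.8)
The plan is to prove the two implications separately, noting that one is immediate while the other is exactly where normalization is used. Throughout, fix two morphisms $f,g\colon A\to B$ and recall that every state is in particular a substate.

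First I would dispatch the direction `state-separated $\Rightarrow$ substate-separated', which needs no normalization whatsoever. Suppose the effectus is state-separated and that $f\circ\omega=g\circ\omega$ holds for all substates $\omega\in\sSt(A)$. Since $\St(A)\subseteq\sSt(A)$, this equation holds a fortiori for all states $\omega\in\St(A)$, and state separation then yields $f=g$. Hence the effectus is substate-separated.

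The substance is the converse, `substate-separated $\Rightarrow$ state-separated'. Assume substate separation and suppose $f\circ\omega=g\circ\omega$ for every state $\omega\in\St(A)$. By substate separation it suffices to upgrade this to agreement on \emph{all} substates. So fix an arbitrary $\omega\in\sSt(A)$. If $\omega=0$ then $f\circ\omega=0=g\circ\omega$, because composition is biadditive and hence preserves $0$. If $\omega\neq 0$, normalization supplies a state $\bar\omega\colon I\to A$ together with the scalar $s=\truth\circ\omega\colon I\to I$ satisfying $\omega=\bar\omega\circ s$. As $\bar\omega$ is a state, the hypothesis gives $f\circ\bar\omega=g\circ\bar\omega$, and associativity of composition yields
\[
f\circ\omega \;=\; (f\circ\bar\omega)\circ s \;=\; (g\circ\bar\omega)\circ s \;=\; g\circ\omega .
\]
Thus $f\circ\omega=g\circ\omega$ for every substate $\omega$, and substate separation delivers $f=g$, as required.

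I expect no serious obstacle: the heart of the argument is simply that, under normalization, every substate factors as a state post-composed with its weight $\truth\circ\omega$, so agreement on states propagates to agreement on substates. The only point needing care is the degenerate case $\omega=0$, which normalization does not cover and must be treated separately using $f\circ 0 = 0 = g\circ 0$.
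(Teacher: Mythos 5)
Your proposal is correct and follows essentially the same route as the paper's own proof: the easy direction via $\St(A)\subseteq\sSt(A)$, and for the converse, splitting into the case $\omega=0$ (handled by additivity of composition) and $\omega\neq 0$ (handled by factoring $\omega=\bar\omega\circ(\truth\circ\omega)$ through its normalization and using agreement on the state $\bar\omega$). No gaps; the treatment of the degenerate zero substate, which normalization does not cover, is exactly the care the paper also takes.
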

\begin{proof}
    See Appendix~\ref{app:proofs-separation-section}.
\end{proof}

In \cite[Proposition~6.4]{Cho2015},
it was shown that if an effectus admits normalization,
the scalars admit a type of division.
In a $\sigma$-effectus, the converse holds,
together with several other equivalent conditions.

\begin{theorem}
\label{thm:normalisation-equiv}
Let $\catC$ be a $\sigma$-effectus.
The following are equivalent.
\begin{myenum}
\item
$\catC$ admits normalization.
\item
The effect monoid $\catC(I,I)$ admits \Define{division}:
for any $s,t\in\catC(I,I)$ with $s\leq t$ and $t\neq 0$,
there is a unique $s/t\in \catC(I,I)$ satisfying $(s/t)\cdot t  = s$.
\item
The effect monoid $\catC(I,I)$ has no nontrivial zero divisors,
i.e.\ $s\cdot t=0$ implies $s=0$ or $t=0$.
\item
Every nonzero scalar $s\colon I\to I$ in $\catC$ is an epi.
\end{myenum}
\end{theorem}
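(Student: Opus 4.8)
The plan is to establish the four conditions equivalent through the implications (i)~$\Rightarrow$~(ii)~$\Rightarrow$~(iii), the short implication (iv)~$\Rightarrow$~(iii), and finally (iii)~$\Rightarrow$~(i) together with (iii)~$\Rightarrow$~(iv), which jointly close the loop. Throughout I would use two background facts about the scalar monoid $M=\catC(I,I)$: that it is an $\omega$-complete effect monoid and hence \emph{commutative} and well-behaved by the structure theory of~\cite{effectmonoids}; and that a $\sigma$-effectus is in particular an effectus, so that (i)~$\Rightarrow$~(ii) is nothing but~\cite[Proposition~6.4]{Cho2015}. The genuinely new content is therefore the recovery of normalization and of the epi property from purely algebraic hypotheses on $M$.

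For (ii)~$\Rightarrow$~(iii) I would exploit the \emph{uniqueness} half of division. Since $s\le 1$ gives $t=(s\ovee s^\bot)\cdot t = s\cdot t\ovee s^\bot\cdot t$, every product satisfies $s\cdot t\le t$ and is thus a legitimate numerator over $t$; the unique quotient $(s\cdot t)/t$ must then be $s$ itself, so $s\cdot t=s'\cdot t$ forces $s=s'$. In other words multiplication by any $t\neq 0$ is injective, and applying this to $s\cdot t=0=0\cdot t$ yields $s=0$, i.e.\ no zero divisors. The step (iv)~$\Rightarrow$~(iii) is equally quick once commutativity is used: if $s\neq 0$ and $s\cdot t=0$, then $t\circ s=s\circ t=0=0\circ s$, so epimorphy of $s$ gives $t=0$.

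The heart of the argument, and where I expect the real obstacle, is (iii)~$\Rightarrow$~(i) and (iii)~$\Rightarrow$~(iv). I would first show that the absence of zero divisors collapses the idempotents of $M$: if $e\cdot e=e$ then $e=e\cdot(e\ovee e^\bot)=e\ovee(e\cdot e^\bot)$, so by cancellation $e\cdot e^\bot=0$, whence $e=0$ or $e^\bot=0$, i.e.\ $e\in\{0,1\}$. Feeding this into the classification of $\omega$-complete effect monoids of~\cite{effectmonoids} (the embedding into a direct sum of a Boolean algebra and $[0,1]_{C(X)}$, where a second point of $X$ or a nontrivial Boolean summand would manufacture a zero divisor) leaves only $M\in\{\{0\},\{0,1\},[0,1]\}$. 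In each case the remaining conditions become explicit: for $\{0\}$ the effectus is trivial; for $\{0,1\}$ every nonzero substate already has weight $1$, hence is a state, and the only nonzero scalar is $\id$.

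The substantive case is $M=[0,1]$, where I would use the $\sigma$-structure to build normalizations by hand. Given a nonzero substate $\omega\colon I\to A$ of weight $t=\truth\circ\omega\neq 0$, I would choose scalars $(a_n)_n$ in $[0,1]$ with $\bigovee_n a_n\cdot t=1$ (a summable realization of $1/t$) and set $\bar\omega=\bigovee_n \omega\circ a_n$ in the $\sigma$-weight module $\sSt(A)$. This family is summable because its weights $(t\cdot a_n)_n$ are summable; its total weight is $t\cdot\bigovee_n a_n=1$, so $\bar\omega$ is a state; and $\sigma$-biadditivity of composition gives $\bar\omega\circ t=\omega\circ\bigovee_n(a_n\cdot t)=\omega$. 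Uniqueness of $\bar\omega$, and dually the epi property (iii)~$\Rightarrow$~(iv), both reduce to injectivity of the $t$-action on substates, which the same reciprocal-sum trick supplies. The main obstacle is thus twofold: pinning $M$ down to the three concrete monoids genuinely leans on the external structure theorem of~\cite{effectmonoids} rather than on an elementary cancellation argument (the order on a general $\omega$-complete effect monoid need not be a lattice, so incomparable witnesses block any naive ``subtract and cancel'' approach), and the $[0,1]$ construction must be performed inside the substate $\sigma$-weight modules, invoking their summability axiom, rather than merely among scalars.
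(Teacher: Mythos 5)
Your proposal is logically sound and the cycle of implications closes correctly; the steps (i)$\Rightarrow$(ii), (ii)$\Rightarrow$(iii) and (iv)$\Rightarrow$(iii) essentially coincide with the paper's (the last one modulo your use of commutativity, which the paper avoids by cancelling the scalar assumed nonzero). But for the crucial direction (iii)$\Rightarrow$(i) (and your replacement of the paper's (i)$\Rightarrow$(iv) by (iii)$\Rightarrow$(iv)) you take a genuinely different route. The paper's proof is uniform and classification-free: given a nonzero substate $\omega$ it sets $s=(\truth\omega)^\bot$ and forms the geometric series $\tilde\omega=\bigovee_{n=0}^\infty\omega\circ s^n$ (summable by the Arbib--Manes iteration theorem \cite[Theorem~3.2.24]{ManesA1986}), then uses effect-algebra cancellation together with the no-zero-divisor hypothesis to show $\truth\circ\tilde\omega=1$ and that $\tilde\omega$ is the unique normalization; the epi property (iv) is then deduced from (i)--(iii) rather than from (iii) alone. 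You instead first pin the scalars down to $\{0\}$, $\{0,1\}$ or $[0,1]$ and then build the normalization concretely by a reciprocal sum $\bigovee_n\omega\circ a_n$ with $\bigovee_n a_n\cdot t=1$, justified by the summability axiom of the $\sigma$-weight module $\sSt(A)$; that construction, and the resulting injectivity of precomposition with a nonzero scalar, are correct (and are in spirit a special case of the paper's series, since $\sum_n(1-t)^n=1/t$). The trade-off: your route makes Theorem~\ref{thm:normalisation-equiv} depend on the heavy classification machinery of \cite{effectmonoids}, whereas the paper's elementary argument keeps the theorem self-contained and only afterwards combines it with Theorem~\ref{thm:no-zero-divisors} to obtain the trichotomy of scalars; your argument, in exchange, is shorter once that classification is granted and makes the $[0,1]$ case completely explicit.

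One caveat: the parenthetical argument by which you derive the trichotomy from the embedding theorem does not work as stated. Theorem~\ref{thm:effect-monoid-char} only gives an \emph{embedding} $M\rightarrowtail M_1\oplus M_2$, and zero divisors of the ambient direct sum need not lie in the image of $M$: for instance $[0,1]$ embeds diagonally into $[0,1]\oplus[0,1]$, which has many zero divisors, while $[0,1]$ has none. So ``a nontrivial Boolean summand or a second point of $X$ would manufacture a zero divisor'' is false for submonoids of $M_1\oplus M_2$. Your idempotent-collapsing observation is correct but by itself only shows that $M$ has no nontrivial idempotents; getting from there to $M\in\{\{0\},\{0,1\},[0,1]\}$ is precisely the content of \cite[Theorem~71]{effectmonoids}, i.e.\ Theorem~\ref{thm:no-zero-divisors} of the paper, which you should simply cite rather than re-derive. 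With that citation in place of the sketch, your proof is complete.
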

\begin{proof}
    See Appendix~\ref{app:proofs-separation-section}.
\end{proof}

\section{Classification of
\texorpdfstring{$\sigma$}{sigma}-effectuses with normalization}
\label{sec:classification}

In this section, we combine the theory of $\sigma$-effectuses
with the classification result of $\omega$-complete effect monoids
obtained in \cite{effectmonoids}.
It leads to the classification of $\sigma$-effectuses
with normalization:
these $\sigma$-effectuses are either the trivial category,
$\sigma$-effectuses with Boolean scalars $\set{0,1}$,
or $\sigma$-effectuses with probabilistic scalars $[0,1]$.
We then investigate the latter two cases in more detail,
assuming the separation properties.

In Examples~\ref{ex:booleanalgebra} and \ref{ex:CX}
we presented two examples
of $\omega$-complete effect monoids:
$\omega$-complete Boolean algebras
and $[0,1]_{C(X)}$
for basically disconnected compact Hausdorff spaces $X$.
One of the main results of \cite{effectmonoids} shows that
these examples are basically the only possible $\omega$-complete effect monoids.

\begin{theorem}[{\cite[Theorem 54]{effectmonoids}}]\label{thm:effect-monoid-char}
    Let $M$ be an $\omega$-complete effect monoid.
    Then $M$ embeds into $M_1\oplus M_2$, where
    $M_1$ is an $\omega$-complete Boolean algebra,
    and $M_2 =[0,1]_{C(X)}$,
    where $X$ is a basically disconnected compact Hausdorff space. \qed
\end{theorem}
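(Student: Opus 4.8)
The plan is to replace the global structure of $M$ by a ``pointwise'' analysis over the Boolean algebra of its idempotents, classify the resulting local pieces, and then reassemble. The first step is to study the idempotents $\Idem(M)=\{p\in M : p\cdot p=p\}$. These should form a Boolean algebra $B$ under $p\wedge q = p\cdot q$ and complement $p^{\perp}$, and $\omega$-completeness of $M$ ought to transfer so that $B$ is an $\omega$-complete Boolean algebra; this is the candidate for $M_1$. A crucial preliminary tool is that $\omega$-completeness lets one extract an idempotent from \emph{any} element: since $a\le 1$ forces the powers to decrease, $a\ge a^2\ge\cdots$, the infimum $a^{\infty}:=\bigwedge_n a^n$ exists, and $\omega$-continuity of the multiplication (Lemma~\ref{lem:sigma-additive-equiv-omega-conti}) gives $(a^{\infty})^2=a^{\infty}$, so $a^{\infty}\in B$.

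The heart of the argument is a local classification: an $\omega$-complete effect monoid whose only idempotents are $0$ and $1$ is isomorphic to either $\{0,1\}$ or $[0,1]$. Here the $a^{\infty}$ construction pays off, since for $0<a<1$ one has $a^{\infty}=0$, i.e.\ $a^n\to 0$, and this contractivity together with divisibility-type consequences of $\omega$-completeness should force the monoid to be totally ordered and, after identifying it with a directly indecomposable $\sigma$-complete MV-algebra equipped with a product, to embed densely into $[0,1]$; $\omega$-completeness then closes the gap to all of $[0,1]$. This step is also where commutativity appears for free: an indecomposable $\omega$-complete effect monoid is commutative, and commutativity of $M$ itself then follows because $M$ embeds into a product of its local pieces (so the general non-commutative examples of effect monoids cannot be $\omega$-complete).

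Next I would set up a subdirect representation of $M$ over the Stone space of $B$: each ultrafilter $U$ yields a quotient $M_U$ that is a directly indecomposable $\omega$-complete effect monoid, hence $\{0,1\}$ or $[0,1]$ by the local lemma, and one obtains an embedding $M\hookrightarrow \prod_U M_U$. Finally I would reassemble by separating the ``Boolean points'' $U$ (where $M_U=\{0,1\}$) from the ``continuous points'' $U$ (where $M_U=[0,1]$): the former contribute a product of copies of $\{0,1\}$, which is a complete, hence $\omega$-complete, Boolean algebra $M_1$, while the latter contribute a product $[0,1]^{S}$ of copies of $[0,1]$, which is exactly $[0,1]_{C(X)}$ for $X$ the Stone--\v Cech compactification of the discrete set $S$ of continuous points. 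Such an $X$ is extremally, hence basically, disconnected (consistently with the fact that Stone spaces of $\omega$-complete Boolean algebras are basically disconnected, cf.\ Example~\ref{ex:CX}), giving the desired embedding $M\hookrightarrow M_1\oplus M_2$.

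The main obstacle is the local classification lemma, that is, proving that a directly indecomposable $\omega$-complete effect monoid is $\{0,1\}$ or $[0,1]$. Extracting total order and divisibility from indecomposability plus $\omega$-completeness, and in particular deriving commutativity in this setting, is the delicate part and is where the real work lies. By contrast, organizing the idempotents into an $\omega$-complete Boolean algebra, the subdirect representation over its Stone space, and the final repackaging into the two-summand form $M_1\oplus M_2$ with a compact basically disconnected $X$ are comparatively routine Stone-duality bookkeeping.
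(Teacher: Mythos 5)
This paper does not prove Theorem~\ref{thm:effect-monoid-char} itself; it imports it from \cite{effectmonoids}, so your attempt must be measured against the proof given there. Your preliminary steps are sound and do match that proof's toolkit: the idempotents form an $\omega$-complete Boolean algebra, and $a^{\infty}=\bigwedge_n a^n$ is an idempotent (indeed the largest one below $a$). Your local classification lemma is also a true statement --- it is essentially Theorem~\ref{thm:no-zero-divisors}, since in an $\omega$-complete effect monoid ``only trivial idempotents'' is equivalent to ``no nontrivial zero divisors'' (if $a\cdot b=0$ then $\lceil a\rceil\cdot\lceil b\rceil=0$ for the least idempotents above $a,b$). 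The fatal gap is precisely in the step you dismiss as ``routine Stone-duality bookkeeping''. The quotient $M_U$ at an ultrafilter $U$ of the idempotent algebra (the colimit of the corners $p\cdot M$ over $p\in U$, i.e.\ $a\sim b$ iff $p\cdot a=p\cdot b$ for some $p\in U$) is an effect monoid with only trivial idempotents, but it is \emph{not} $\omega$-complete unless $U$ is countably complete, which free ultrafilters essentially never are. Concretely, take $M=[0,1]^{\mathbb{N}}$, so the idempotents are $\mathcal{P}(\mathbb{N})$, and let $U$ be a free ultrafilter: then $M_U$ is the ultrapower $[0,1]^{\mathbb{N}}/U$, and the class of $(1,\tfrac12,\tfrac13,\dots)$ is a nonzero element lying below every constant $\tfrac1n$. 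So $M_U$ is a totally ordered effect monoid with infinitesimals, hence neither $\{0,1\}$ nor $[0,1]$. The stalks genuinely fail your classification (this is not merely a missing hypothesis-check), the partition into ``Boolean points'' and ``continuous points'' does not exist, and the reassembly into $M_1\oplus M_2$ collapses; so does your derivation of commutativity.

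The proof in \cite{effectmonoids} avoids exactly this trap by localizing with \emph{corners} rather than quotients: for an idempotent $p$, the corner $p\cdot M\cong[0,p]_M$ \emph{does} inherit $\omega$-completeness. One proves a dichotomy lemma: below any nonzero element there is a nonzero idempotent $p$ that is either \emph{halvable} (in which case $p\cdot M$ carries a $[0,1]$-action, i.e.\ is convex) or has Boolean corner $p\cdot M$. By Zorn's lemma one chooses a maximal orthogonal family of such idempotents; maximality plus the dichotomy makes the induced map into the product of the corresponding corners injective. The product of the Boolean corners is an $\omega$-complete Boolean algebra $M_1$; the product of the convex corners is a convex $\omega$-complete effect monoid, which a Gelfand/Yosida-type representation for monotone $\sigma$-complete order-unit spaces identifies with $[0,1]_{C(X)}$ for $X$ basically disconnected. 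This also explains why the theorem only asserts an \emph{embedding}: under mere $\omega$-completeness the (possibly uncountable) family of halvable or Boolean idempotents need not have a supremum, so $M$ need not split as a direct sum of its own corners. Finally, note that the logical order in \cite{effectmonoids} is the reverse of yours: as this paper itself remarks, Theorem~\ref{thm:no-zero-divisors} is obtained as a \emph{consequence} of Theorem~\ref{thm:effect-monoid-char}; if, as in your plan, one instead proves the indecomposable case first, one still has to replace your ultrafilter step by a corner-based (or otherwise $\omega$-completeness-preserving) assembly argument.
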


It immediately
follows that any $\omega$-complete effect monoid is commutative,
since both $M_1$ and $M_2$ above are commutative.
Hence we obtain the following result.

\begin{corollary}
    The scalars of a $\sigma$-effectus are commutative.
    \qed
\end{corollary}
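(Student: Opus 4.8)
The plan is to read the corollary off Theorem~\ref{thm:effect-monoid-char} directly, the only extra ingredient being a one-line lemma to the effect that commutativity is reflected along an embedding.

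First I would pin down the algebraic structure carried by the scalars. For a $\sigma$-effectus $\catC$ the scalars $\catC(I,I)=\Pred(I)$ form a $\sigma$-effect monoid: by the corollary following Proposition~\ref{prop:effect-algebra-PAM-iff-complete} the predicate set $\Pred(I)$ is a $\sigma$-effect algebra, and composition of scalars supplies the associative, unital, ($\sigma$-)biadditive multiplication making it a $\sigma$-effect monoid. Since every $\sigma$-effect algebra is by definition $\omega$-complete, the scalars are in particular an $\omega$-complete effect monoid, so Theorem~\ref{thm:effect-monoid-char} applies and yields an embedding $\catC(I,I)\hookrightarrow M_1\oplus M_2$ with $M_1$ an $\omega$-complete Boolean algebra and $M_2=[0,1]_{C(X)}$ for some basically disconnected compact Hausdorff space $X$.

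Next I would note that the target is commutative. By Example~\ref{ex:booleanalgebra} the multiplication on $M_1$ is $a\cdot b = a\wedge b$, which is commutative; by Example~\ref{ex:CX} the multiplication on $M_2$ is pointwise multiplication of $[0,1]$-valued continuous functions, again commutative. As the multiplication on the direct sum is computed coordinatewise, $M_1\oplus M_2$ is commutative as well.

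Finally I would reflect commutativity along the embedding. Writing $\iota\colon\catC(I,I)\hookrightarrow M_1\oplus M_2$ for the (injective, multiplication-preserving) embedding, for any scalars $s,t$ we have $\iota(s\cdot t)=\iota(s)\cdot\iota(t)=\iota(t)\cdot\iota(s)=\iota(t\cdot s)$, and injectivity of $\iota$ gives $s\cdot t = t\cdot s$; hence the scalars commute. The sole point requiring care is that ``embeds'' in Theorem~\ref{thm:effect-monoid-char} be understood as an injective \emph{effect-monoid} morphism, i.e.\ one preserving the multiplication and not merely the additive/order structure; this is exactly what the cited theorem supplies, so beyond invoking it there is no genuine obstacle, the result being a formal consequence.
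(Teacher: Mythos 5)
Your proposal is correct and follows exactly the paper's own route: the paper likewise observes that the scalars form an $\omega$-complete effect monoid, invokes Theorem~\ref{thm:effect-monoid-char} to embed them into $M_1\oplus M_2$ with both summands commutative, and concludes commutativity is inherited along the embedding. The only difference is that you spell out the details the paper leaves implicit (that the embedding is a multiplication-preserving injection and that commutativity is reflected along it), which is a fair and harmless elaboration.
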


Theorem~\ref{thm:effect-monoid-char}
has the following consequence, also shown in \cite{effectmonoids}.

\begin{theorem}[{\cite[Theorem 71]{effectmonoids}}]\label{thm:no-zero-divisors}
    Let $M$ be an $\omega$-complete effect monoid with no non-trivial
    zero divisors.
    Then either $M=\{0\}$, $M=\{0,1\}$ or $M=[0,1]$. \qed
\end{theorem}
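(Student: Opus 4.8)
The plan is to feed the no-zero-divisor hypothesis into the structure theorem (Theorem~\ref{thm:effect-monoid-char}) and show that it collapses the embedding $M \hookrightarrow M_1 \oplus M_2$ onto a single summand, which is then forced to be $\{0,1\}$ or $[0,1]$. The whole argument rests on one observation: \emph{$M$ has no nontrivial idempotents}. Indeed, if $e \cdot e = e$, then multiplying $e \ovee e^\perp = 1$ by $e$ and using biadditivity gives $e \ovee (e \cdot e^\perp) = e \cdot 1 = e$, so $e \cdot e^\perp = 0$ by cancellation in the effect algebra $M$; the no-zero-divisor hypothesis then yields $e = 0$ or $e^\perp = 0$, i.e.\ $e \in \{0,1\}$.

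Next I would exploit $\omega$-completeness to manufacture idempotents. For $a \in M$ the power sequence $a \ge a^2 \ge \cdots$ is decreasing (since $a \le 1$), so $e_a := \bigwedge_n a^n$ exists. Writing $e_a^\perp = \bigvee_n (a^n)^\perp$ and using that multiplication is $\omega$-continuous (Lemma~\ref{lem:sigma-additive-equiv-omega-conti}, as $M$ is a $\sigma$-effect monoid) one computes $a \cdot e_a^\perp = \bigvee_n (a \ominus a^{n+1}) = a \ominus e_a$, whence $a \cdot e_a = e_a$; a second application of $\omega$-continuity gives $e_a \cdot e_a^\perp = \bigvee_n (e_a \ominus e_a a^n) = 0$, so $e_a$ is idempotent. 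By the observation $e_a \in \{0,1\}$, and $e_a = 1$ forces $a = 1$; therefore \emph{every $a \neq 1$ satisfies $\bigwedge_n a^n = 0$}.

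Now embed $M \hookrightarrow M_1 \oplus M_2$ with $M_1$ an $\omega$-complete Boolean algebra and $M_2 = [0,1]_{C(X)}$. Since every element of a Boolean algebra is idempotent, the first component of $a^n$ is constant in $n$, so for $a = (a_1,a_2) \neq 1$ the identity $\bigwedge_n a^n = 0$, computed componentwise, forces $a_1 = 0$. Thus $a_1 \in \{0,1\}$ for all $a$, with $a_1 = 1$ only when $a = 1$; consequently the projection to the second factor is injective and identifies $M$ with an $\omega$-complete, no-zero-divisor sub-effect-monoid $N \subseteq [0,1]_{C(X)}$, the Boolean part having been absorbed. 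If $N = \{0,1\}$ we are in the deterministic case and $M \cong \{0,1\}$ (or $M \cong \{0\}$ when $M$ is trivial).

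The hard part is the remaining case, where $N$ contains an element that is not $\{0,1\}$-valued and one must show $N \cong [0,1]$. Here I would invoke Theorem~\ref{thm:normalisation-equiv}: realizing $M$ as the scalars of the $\sigma$-effectus $\sEMod[M]^{\opp}$ (Proposition~\ref{prop:emod-effectus}), the no-zero-divisor hypothesis is equivalent to $M$ admitting division. A divisible, $\omega$-complete effect monoid with no nontrivial zero divisors and a non-idempotent element should be totally ordered and Archimedean, and divisibility together with $\omega$-completeness then pin it down to the full unit interval $[0,1]$; note that $N$ need not force $X$ to be a point, since $N$ may sit inside $[0,1]_{C(X)}$ as the constant functions. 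Establishing this last identification rigorously---proving total order and ruling out proper divisible $\omega$-complete subobjects---is the main obstacle, and is exactly the technical core carried out in the analysis of $\omega$-complete effect monoids in \cite{effectmonoids}.
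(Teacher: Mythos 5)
Your proposal has two genuine gaps, and the second one is fatal. First, a point of context: the paper offers no proof of this statement to compare against --- it is quoted, with a \verb|\qed|, from \cite{effectmonoids} (Theorem 71 there) --- so your argument can only be judged as a would-be self-contained proof, and it is not one. In your final paragraph you explicitly hand ``the technical core'' back to the analysis of $\omega$-complete effect monoids in \cite{effectmonoids}; but that analysis \emph{is} the proof of the very theorem you are proving, so the appeal is circular. Concretely, after your reductions you hold an $\omega$-complete effect monoid $N$ with no nontrivial zero divisors, no nontrivial idempotents, and at least one non-idempotent element, and you must show $N\cong[0,1]$. The sketch you offer (``totally ordered and Archimedean, then divisibility and $\omega$-completeness pin it down'') is not an argument. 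In particular, division in the sense of Theorem~\ref{thm:normalisation-equiv}(ii) does not by itself yield halving or any $[0,1]$-action --- the Boolean algebra $\set{0,1}$ admits division --- so manufacturing convex structure out of the non-idempotent element is exactly the hard step, and it is missing.

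The earlier gap is in the step that kills the Boolean coordinate. Theorem~\ref{thm:effect-monoid-char} provides only an \emph{embedding} of effect monoids $\iota\colon M\to M_1\oplus M_2$; nothing in its statement says $\iota$ preserves countable infima. An additive injection is merely order-preserving, so from $\bigwedge_n a^n=0$ in $M$ you only get $0=\iota(\bigwedge_n a^n)\le \bigwedge_n\iota(a^n)$, which says nothing about the componentwise infimum $(a_1,\bigwedge_n a_2^n)$; the conclusion $a_1=0$ does not follow. This is not a pedantic worry: for a free ultrafilter $U$ on $\N$, the map $A\mapsto(\chi_U(A),A)$ is an effect-monoid embedding of $\Pow(\N)$ into $\set{0,1}\oplus\Pow(\N)$ that fails to preserve the infimum of the decreasing sequence $A_n=\set{n,n+1,\dotsc}$ (each $A_n$ lies in $U$, yet $\bigcap_n A_n=\emptyset$), so componentwise infimum computations through a mere embedding are invalid. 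To repair this step you would need the embedding of \cite{effectmonoids} to be $\omega$-continuous, or the stronger direct-sum decomposition of $M$ into a Boolean summand and a convex summand --- again results established in \cite{effectmonoids} but not stated in this paper. By contrast, your preliminary lemmas are correct and well executed: no-zero-divisors does rule out nontrivial idempotents, and $e_a=\bigwedge_n a^n$ is idempotent by $\omega$-continuity of the product (legitimately available here, since the paper records that every $\omega$-complete effect monoid is a $\sigma$-effect monoid), whence $\bigwedge_n a^n=0$ for all $a\neq 1$. These are real ingredients of the kind used in \cite{effectmonoids}, but they do not reach the theorem.
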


Combining Theorems~\ref{thm:no-zero-divisors} and~\ref{thm:normalisation-equiv} we immediately get the following result characterizing the possible scalars in a $\sigma$-effectus with normalization.

\begin{theorem}
A $\sigma$-effectus $\catC$ admits normalization
if and only if the effect monoid $\catC(I,I)$ of scalars is isomorphic
to $\set{0}$, $\set{0,1}$, or $[0,1]$.
\qed
\end{theorem}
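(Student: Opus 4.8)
The plan is to derive the statement directly by combining Theorem~\ref{thm:normalisation-equiv} with Theorem~\ref{thm:no-zero-divisors}, the key preliminary observation being that the scalars of a $\sigma$-effectus form an $\omega$-complete effect monoid. Indeed, for the unit object $I$ the hom-set $M \coloneqq \catC(I,I)$ is at once an effect monoid (with multiplication given by composition of scalars) and, by the Corollary following Proposition~\ref{prop:effect-algebra-PAM-iff-complete}, a $\sigma$-effect algebra; since a $\sigma$-effect algebra is by definition $\omega$-complete, $M$ is an $\omega$-complete effect monoid. This is exactly the hypothesis required to invoke the structural classification results, so I would record it first.

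For the forward direction I would assume that $\catC$ admits normalization. By the implication (i)$\Rightarrow$(iii) of Theorem~\ref{thm:normalisation-equiv}, the effect monoid $M$ then has no nontrivial zero divisors. Applying Theorem~\ref{thm:no-zero-divisors} to the $\omega$-complete effect monoid $M$ immediately yields that $M$ is isomorphic to $\set{0}$, $\set{0,1}$, or $[0,1]$.

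For the converse I would assume $M$ is isomorphic to one of $\set{0}$, $\set{0,1}$, $[0,1]$ and check in each case that it has no nontrivial zero divisors: in $\set{0}$ this holds vacuously, in $\set{0,1}$ the product $s\cdot t = s\wedge t = 0$ forces $s=0$ or $t=0$, and in $[0,1]$ ordinary real multiplication has no zero divisors. By the implication (iii)$\Rightarrow$(i) of Theorem~\ref{thm:normalisation-equiv}, it then follows that $\catC$ admits normalization, closing the equivalence.

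I expect essentially no obstacle here, since all the real content has been front-loaded into Theorems~\ref{thm:normalisation-equiv} and~\ref{thm:no-zero-divisors}; the only hand computation is the (trivial) verification that the three concrete monoids lack zero divisors. The single point genuinely worth making explicit is the passage from \emph{$\sigma$-effectus} to \emph{$\omega$-complete effect monoid of scalars}, as this is what licenses the application of Theorem~\ref{thm:no-zero-divisors} rather than an argument about arbitrary effect monoids.
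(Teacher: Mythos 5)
Your proposal is correct and follows exactly the paper's intended argument: the theorem is stated there as an immediate consequence of combining Theorem~\ref{thm:normalisation-equiv} (normalization $\Leftrightarrow$ no nontrivial zero divisors) with Theorem~\ref{thm:no-zero-divisors}, using the fact that the scalars of a $\sigma$-effectus form an $\omega$-complete effect monoid. Your explicit verification that $\set{0}$, $\set{0,1}$ and $[0,1]$ lack zero divisors, and your care in justifying the $\omega$-completeness of $\catC(I,I)$, simply spell out the steps the paper leaves to the reader.
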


Of these three options, the first always leads to a trivial effectus.

\begin{proposition}
  Let $\catC$ be an effectus where the scalars $\catC(I,I)$ are
  isomorphic to $\set{0}$. Then $\catC$ is equivalent to the trivial
  category with a single object and a single morphism.
\end{proposition}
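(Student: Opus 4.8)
The plan is to show that the collapse of the scalars to $\set{0}$ forces every homset of $\catC$ to be a singleton, from which equivalence with the terminal category follows formally. The crucial first step is to observe that in any effectus the top predicate on the unit object coincides with the identity, i.e.\ $\truth_I=\id_I$. This is because $\catC(I,I)$ is an effect monoid whose multiplication is composition and whose multiplicative unit is the top element $\truth_I=1$; instantiating the unit law $1\cdot a=a$ at the scalar $a=\id_I$ gives $\truth_I\circ\id_I=\id_I$, while the left-hand side is just $\truth_I$. Now if $\catC(I,I)\cong\set{0}$ then its top and bottom elements agree, $\truth_I=\falsity_I=0_{II}$, and hence $\id_I=0_{II}$.

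Next I would propagate this degeneracy to arbitrary objects. For any object $A$ and any predicate $p\colon A\to I$, biadditivity of composition gives $p=\id_I\circ p=0_{II}\circ p=0_{AI}=\falsity_A$, so $\Pred(A)$ is the trivial effect algebra and in particular $\truth_A=\falsity_A=0_{AI}$. Applying this with $B$ in place of $A$, for every morphism $f\colon A\to B$ we get $\truth_B\circ f=0_{BI}\circ f=0_{AI}=\falsity_A$, again by biadditivity of composition. Effectus axiom~(ii) then forces $f=0_{AB}$. Hence every homset $\catC(A,B)$ contains exactly the single morphism $0_{AB}$.

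Finally I would conclude the equivalence. Since $\catC$ is nonempty (it contains the unit object $I$) and every homset is a singleton, $\catC$ is the codiscrete category on its class of objects: between any two objects there is exactly one morphism, each object's unique endomorphism is necessarily its identity, and any two objects are uniquely isomorphic. The unique functor $\catC\to\mathbf{1}$ into the terminal category is therefore essentially surjective and induces bijections on homsets (singleton to singleton), hence it is an equivalence. I expect no serious obstacle; the only point requiring genuine care is the identity $\truth_I=\id_I$, after which everything cascades from the two biadditivity facts and axiom~(ii).
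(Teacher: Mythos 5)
Your proof is correct and takes essentially the same route as the paper: establish $\id_I=0_{II}$, deduce that every truth map $\truth_A$ vanishes by additivity of composition, invoke effectus axiom~(ii) to kill every morphism, and conclude that all homsets are singletons, whence $\catC$ is equivalent to the trivial category. The only difference is cosmetic: since $\catC(I,I)\cong\set{0}$ is a one-element set, $\id_I=0_{II}$ holds immediately, so your opening detour through the general identity $\truth_I=\id_I$ (via the effect-monoid structure of the scalars) is sound but unnecessary.
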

\begin{proof}
Because $\id=0\colon I\to I$, any truth map $\truth\colon A\to I$
satisfies $\truth=\id\circ\truth=0\circ \truth=0$.
Thus for any morphism $f\colon A\to B$
we have $\truth\circ f=0\circ f=0$.
By an axiom of effectuses, we obtain $f=0$.
Therefore for any objects $A,B\in\catC$,
the homset $\catC(A,B)$ is a singleton.
We conclude that
$\catC$ is equivalent to the trivial category.
\end{proof}


\subsection{\texorpdfstring{$\sigma$}{sigma}-Effectus with Boolean scalars}



If a $\sigma$-effectus $\catC$ has Boolean scalars $\{0,1\}$,
the operational theory described by $\catC$ is deterministic:
every predicate either holds with certainty on each state,
or does not hold at all.
Therefore such an effectus is fundamentally classical,
as it is well-known that quantum theory
cannot be described as a deterministic theory.

\begin{example}
Let $\sEA$ be the category of $\sigma$-effect algebras
and $\sigma$-additive maps.
We have $\sEA\cong \sEMod[\set{0,1}]$,
and hence $\sEA^{\opp}$ is an $\sigma$-effectus
with scalars $\set{0,1}$.
Therefore $\sEA^{\opp}$ is deterministic
and `classical'.
It may seem to contradict the fact that
$\sigma$-effect algebras also include spaces of quantum effects.
This paradoxical situation can be explained as follows.

  Let $H$ be a Hilbert space with $\dim(H)>2$,
  and let $E=[0,1]_{B(H)}$ be the set of effects on $H$
  (see Example~\ref{ex:standard-effect-algebra}).
  Then $E$ is a $\sigma$-effect algebra.
  The subset of projections
  $P(H)\sse E$ is then an $\sigma$-effect subalgebra
  and hence is an object in the
  effectus $\sEA^\opp$.
  By the Kochen--Specker theorem~\cite{KochenS1967}, we have
  $\St(P(H))\equiv \Tot(\sEA^\opp)(\set{0,1},P(H)) = \emptyset$,
  that is, there exists
  no unital $\sigma$-additive map $P(H)\to\set{0,1}$.
  This implies $\St(E)=\emptyset$ too.
  Operationally speaking,
  therefore,
  one cannot prepare a system of
  type $P(H)$ or $E$ in $\sEA^\opp$.
  In other words, both $P(H)$ and $E$
  are operationally equivalent to the empty system $0$.
  \begin{Auxproof}
  More technically this means the following.
  Suppose that there are no states on $A$.
  Then $id_A$ and $0_{AA}$ are operationally equivalent.
  From this it follows that
  $A$ is isomorphic to $0$ in the category
  quotiented by the operational equivalence.

  (Note that one can make a similar observation
  for the effectus $\EA^{\opp}$ of effect algebras and additive maps.
  Then $\St(P(H))\equiv
  \Tot(\EA^{\opp})(\set{0,1},P(H)) = \emptyset$
  (for $\dim(H)>2$)
  is precisely the Kochen-Specker theorem,
  see e.g.~\cite{StatonU2018}.)
  \end{Auxproof}
\end{example}

This observation motivates us to restrict ourselves to
$\sigma$-effectuses with scalars $\set{0,1}$
that are substate-separated (or equivalently,
state-separated,
by Proposition~\ref{prop:state-sep-equiv-substate-sep}),
in order to take
operational equivalence into account.
We will show that these $\sigma$-effectuses
always embed into
the $\sigma$-effectus $\Pfn$ of sets and partial functions
via faithful morphisms of $\sigma$-effectuses,
and hence they are `sub-$\sigma$-effectuses' of $\Pfn$.
We also show that they embed into
the $\sigma$-effectus of $\omega$-complete Boolean algebras.
These results make it more precise what we mean by
`$\sigma$-effectuses with scalars $\set{0,1}$ are classical'.

\begin{proposition}\label{prop:equiv-pfn-wmod}
  We have an equivalence of categories
  $\sWMod[\{0,1\}]\overset\simeq\to\Pfn$.
  The functor is also a morphism of $\sigma$-effectuses.
\end{proposition}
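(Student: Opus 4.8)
The plan is to exhibit an explicit functor $F\colon \sWMod[\{0,1\}]\to\Pfn$ and check that it is an equivalence of categories and a morphism of $\sigma$-effectuses. By Example~\ref{ex:weight-module-bool}, a $\sigma$-weight $\{0,1\}$-module is exactly a pointed set $(X,0)$, so the domain is (isomorphic to) $\pSet$. Given a pointed set $(X,0)$, I would send it to the set $X\setminus\{0\}$ of nonzero elements. A weight-decreasing $\sigma$-additive $M$-action-preserving map $f\colon(X,0)\to(Y,0)$ (i.e.\ a pointed map) then induces the partial function $F(f)\colon X\setminus\{0\}\rightharpoonup Y\setminus\{0\}$ defined by $F(f)(x)=f(x)$ when $f(x)\neq 0$ and undefined when $f(x)=0$. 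The key observation, drawn straight from Example~\ref{ex:weight-module-bool}, is that in a $\sigma$-weight $\{0,1\}$-module all nonzero elements have weight $1$ and no two nonzero elements are ever summable; so the ``$0$'' element genuinely behaves like ``undefined'', and the pointed maps are precisely the data of partial functions.

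\textbf{First} I would verify functoriality: $F(\id)=\id$ is clear, and for composition one checks that $F(g)\circ F(f)$ and $F(g\circ f)$ agree on domains of definition, which holds because $(g\circ f)(x)$ is defined and nonzero iff $f(x)\neq 0$ and $g(f(x))\neq 0$, matching exactly how partial functions compose. \textbf{Next}, to show $F$ is an equivalence I would argue it is both essentially surjective and fully faithful. Essential surjectivity is immediate: any set $S\in\Pfn$ is $F$ of the pointed set $(S\sqcup\{0\},0)$ obtained by freely adjoining a basepoint. For fully faithfulness, I would exhibit the inverse construction on homsets: a partial function $h\colon S\rightharpoonup T$ corresponds to the pointed map $\tilde h\colon S\sqcup\{0\}\to T\sqcup\{0\}$ sending $x\mapsto h(x)$ where defined, and $x\mapsto 0$ otherwise (and $0\mapsto 0$); the assignments $f\mapsto F(f)$ and $h\mapsto\tilde h$ are mutually inverse bijections between $\pSet((X,0),(Y,0))$ and $\Pfn(X\setminus\{0\},Y\setminus\{0\})$.

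\textbf{Finally} I would check the $\sigma$-effectus structure is preserved. This requires matching the three pieces of data: the unit object, countable coproducts, and the truth maps. The unit of $\sWMod[\{0,1\}]$ is $M=\{0,1\}$ (Proposition~\ref{prop:wmod-effectus}), which as a pointed set is $\{0,1\}$ with basepoint $0$, and $F$ sends it to the singleton $\{1\}\cong\{*\}=I$ in $\Pfn$, as required. For coproducts, the coproduct formula of Proposition~\ref{prop:wmod-effectus} together with the ``no two nonzero elements summable'' property forces the coproduct of pointed sets to be the wedge/smash-type union identifying basepoints; I would check $F$ carries this to the disjoint-union coproduct of $\Pfn$. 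And the truth map $\truth_X$ sends each element to its weight $\weight{x}\in\{0,1\}$, which under $F$ becomes the partial function $X\setminus\{0\}\rightharpoonup\{*\}$ defined exactly on the nonzero elements — i.e.\ the total map, as it should be.

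\textbf{The main obstacle} I anticipate is not the categorical equivalence, which is essentially bookkeeping once Example~\ref{ex:weight-module-bool} is in hand, but the careful verification that $F$ preserves \emph{countable} coproducts and the summability structure. One must confirm that the summability condition defining coproducts in $\sWMod[\{0,1\}]$ (that $(\weight{x_\lambda})_\lambda$ be summable in $\{0,1\}$, forcing at most one nonzero coordinate) corresponds precisely to the disjoint-domain summability of partial functions in $\Pfn$, and that this correspondence is natural. Everything else — functoriality, essential surjectivity, bijectivity on homsets, and unit preservation — reduces to the single structural fact that in the $\{0,1\}$ case ``nonzero'' is an all-or-nothing weight-$1$ condition with no nontrivial additions, so I expect those to be routine.
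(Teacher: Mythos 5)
Your proposal is correct and takes essentially the same route as the paper: identify $\sWMod[\{0,1\}]$ with pointed sets via Example~\ref{ex:weight-module-bool}, use the standard equivalence $\pSet\simeq\Pfn$ given by deleting (resp.\ adjoining) the basepoint, and then check that this composite preserves countable coproducts and the unit object (including the truth maps). The paper states these verifications as well known or easily checked, whereas you spell them out — including the key observation that summability in $\{0,1\}$ forces at most one nonzero coordinate, so the wedge-style coproduct of Proposition~\ref{prop:wmod-effectus} maps to disjoint union in $\Pfn$ — but the underlying argument is identical.
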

\begin{proof}
  As we observed in Example \ref{ex:weight-module-bool},
  $\sigma$-weight $\set{0,1}$-modules are merely pointed sets:
  $\sWMod[\{0,1\}]\cong \pSet$.
  Then the equivalence of the categories
  $\pSet\simeq \Pfn$ is well-known ---
  it sends $f\colon (X,x_0)\to (Y,y_0)$ in $\pSet$
  to $\overline{f}\colon X\setminus\set{x_0}\to Y\setminus\set{y_0}$
  in $\Pfn$ where $\overline{f}(x)$ is defined iff $f(x)\neq y_0$
  and in that case $\overline{f}(x)=f(x)$.
  The equivalence $\sWMod[\{0,1\}]\cong\pSet\simeq \Pfn$
  preserves all coproducts, and it is easily checked that
  it preserves the unit object. Hence it is also
  a morphism of $\sigma$-effectuses.
  \begin{Auxproof}
  Send a $\sigma$-weight $\{0,1\}$-module $X$ to the set
  $X\backslash \{0\}$ in $\Pfn$. A morphism $f:X\rightarrow Y$ in
  $\sWMod[\{0,1\}]$ is sent to
  $\overline{f}:X\backslash \{0\}\rightarrow Y\backslash \{0\}$ where
  $\overline{f}(x) = f(x)$ when $f(x)\neq 0$ and otherwise it is
  undefined.
  In the other direction send a set $A$ in $\Pfn$ to $A+\{0\}$,
  with weight $\weight{a}=1$ for $a\in A$ and $\weight{0}=0$.
  The set $A+\{0\}$ is trivially a $\sigma$-PAM because no family
  that contain more than one element is summable.
  A partial function $f\colon A\rightarrow B$ is sent to
  $\overline{f}:A+\{0\}\rightarrow B+\{0\}$ with $\overline{f}(0) = 0$
  and $\overline{f}(a) = f(a)$ when $f(a)$ is defined and
  $\overline{f}(a)=0$ otherwise for all $a\in A$.
  \end{Auxproof}
\end{proof}

Combining it with Proposition~\ref{prop:separation-faithful},
and with straightforward calculation,
we obtain the following theorem.

\begin{theorem}
Let $\catC$ be a substate-separated $\sigma$-effectus
with $\catC(I,I)\cong\set{0,1}$.
Then there is a faithful morphism of $\sigma$-effectuses
$F\colon\catC\to \Pfn$.
Moreover, we have $\St(A)\cong FA$ for all $A\in\catC$.
\qed
\end{theorem}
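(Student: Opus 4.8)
The plan is to assemble this theorem almost entirely from results already established in the excerpt, since the heavy lifting has been delegated to the preceding propositions. The strategy is to factor the desired faithful morphism $F$ as the composite of the substate functor with the equivalence $\sWMod[\{0,1\}]\simeq\Pfn$, and then verify the state-identification claim $\St(A)\cong FA$ separately. Since $\catC(I,I)\cong\set{0,1}$, we have $M^{\opp}\cong\set{0,1}$ as well (the Booleans are commutative), so the substate functor has target $\sWMod[M^{\opp}]\cong\sWMod[\set{0,1}]$.

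First I would invoke the proposition that $\sSt\colon\catC\to\sWMod[M^{\opp}]$ is a morphism of $\sigma$-effectuses, which applies to any $\sigma$-effectus. Since $\catC$ is substate-separated and $M^{\opp}\cong\set{0,1}$, Proposition~\ref{prop:separation-faithful} tells us this functor is \emph{faithful}. Next I would compose with the equivalence $\sWMod[\{0,1\}]\overset\simeq\to\Pfn$ from Proposition~\ref{prop:equiv-pfn-wmod}, which is itself a morphism of $\sigma$-effectuses. Setting $F$ to be this composite, I note that morphisms of $\sigma$-effectuses compose (the defining conditions---preservation of countable coproducts and of the unit---are manifestly closed under composition), and that faithfulness is preserved under postcomposition with an equivalence (which is in particular faithful). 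Hence $F\colon\catC\to\Pfn$ is a faithful morphism of $\sigma$-effectuses, settling the first claim.

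For the final claim $\St(A)\cong FA$, I would trace through what $F$ does to an object. The substate functor sends $A$ to the $\sigma$-weight $\set{0,1}$-module $\sSt(A)$, and the equivalence $\sWMod[\{0,1\}]\simeq\Pfn$ sends a pointed set $(X,x_0)$ to $X\setminus\set{x_0}$. Thus $FA\cong\sSt(A)\setminus\set{0}$, the set of nonzero substates on $A$. The point is that when the scalars are $\set{0,1}$, every nonzero substate already has weight $1$ and is therefore a state: indeed, for $\omega\in\sSt(A)$ the weight $\weight{\omega}=\truth\circ\omega\in\set{0,1}$, and $\weight{\omega}=0$ forces $\omega=0$ by the weight-module axioms. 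So the nonzero substates are exactly the states, giving $FA\cong\sSt(A)\setminus\set{0}=\St(A)$.

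I expect the only genuinely substantive point---the ``easy calculation'' the excerpt alludes to---to be this last identification of nonzero substates with states, which hinges on the dichotomy of weights in the Boolean case already observed in Example~\ref{ex:weight-module-bool}. Everything else is bookkeeping: checking that the composite of two morphisms of $\sigma$-effectuses is one, and that faithful composed with an equivalence stays faithful. The main conceptual obstacle, if any, is simply confirming that the two cited propositions genuinely apply here (i.e.\ that $M^{\opp}\cong\set{0,1}$ so that the separation equivalence and the $\Pfn$-equivalence line up on the same category $\sWMod[\set{0,1}]$), which is immediate from commutativity of $\set{0,1}$.
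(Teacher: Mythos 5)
Your proposal is correct and follows essentially the same route as the paper: the paper obtains this theorem by composing the (faithful, by Proposition~\ref{prop:separation-faithful}) substate functor $\sSt\colon\catC\to\sWMod[M^{\opp}]$ with the equivalence of Proposition~\ref{prop:equiv-pfn-wmod}, the remaining ``easy calculation'' being exactly your identification of nonzero substates (all of weight $1$ in the Boolean case) with states, giving $\St(A)\cong FA$.
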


We write $\omegaBA$ for the category of
$\omega$-complete\footnote{%
For a Boolean algebra, $\omega$-completeness
is equivalent to
existence of all countable joins (and meets).}
Boolean algebras
and functions that preserves countable joins
and \emph{nonempty} countable meets.
Then one can show that $\omegaBA^{\opp}$
is a $\sigma$-effectus
--- in fact, $\omegaBA$ is a full subcategory of $\sEA$
(the fullness is proved similarly to \cite[Lemma 6.5.18]{Cho2019PhD}).
The following result can be easily verified.



\begin{proposition}\label{prop:powerset-functor}
  The contravariant powerset functor is a faithful morphism of $\sigma$-effectuses $\Pow:\Pfn\rightarrow \omegaBA^\opp$,
where
$\Pow(f)(S)= \set{x\in X | f(x)
\text{ is defined and } f(x)\in S}$
for partial functions $f\colon X\rightharpoonup Y$
and $S\in \Pow(Y)$.\qed
\end{proposition}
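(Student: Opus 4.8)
The plan is to verify the three things packed into the statement: that $\Pow$ is a well-defined functor $\Pfn \to \omegaBA^{\opp}$, that it preserves countable coproducts and the unit (so that it is a morphism of $\sigma$-effectuses), and that it is faithful.

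First I would check well-definedness and functoriality. The powerset $\Pow(X)$ of any set is a complete, hence $\omega$-complete, Boolean algebra. For a partial function $f\colon X\rightharpoonup Y$ the preimage map $\Pow(f)$ preserves arbitrary unions, and in particular countable joins; and it preserves arbitrary \emph{nonempty} intersections, and in particular nonempty countable meets. The restriction to nonempty meets is essential, and is exactly the reason $\omegaBA$-morphisms are required to preserve only nonempty countable meets: the empty meet is the top $Y\in\Pow(Y)$, but $\Pow(f)(Y)=\Dom(f)$, which in general is a proper subset of $X$ and hence not the top of $\Pow(X)$. Thus each $\Pow(f)$ really is a morphism in $\omegaBA$. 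Since $\Pow(\id)=\id$ and $\Pow(g\circ f)=\Pow(f)\circ\Pow(g)$, and passing to the opposite reverses the direction a second time, $\Pow$ is a (covariant) functor into $\omegaBA^{\opp}$.

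Next I would check the $\sigma$-effectus axioms. For countable coproducts, the powerset of a countable disjoint union $\coprod_{j} X_j$ is isomorphic to the Cartesian product $\prod_j \Pow(X_j)$ via $S\mapsto (S\cap X_j)_j$; this is an isomorphism of $\omega$-complete Boolean algebras compatible with the coprojections, and by Proposition~\ref{prop:emod-effectus} (applied through $\sEA\cong\sEMod[\set{0,1}]$) the products in $\omegaBA$ are the coproducts in $\omegaBA^{\opp}$, so coproducts are preserved. For the unit, $\Pow(I_{\Pfn})=\Pow(\set{*})$ is the two-element Boolean algebra, which is the unit object of $\omegaBA^{\opp}$; take $u$ to be this canonical isomorphism. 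The truth map $\truth_X\colon X\to I$ in $\Pfn$ is the total map collapsing $X$ onto $\set{*}$, so $\Pow(\truth_X)$ sends $\emptyset\mapsto\emptyset$ and $\set{*}\mapsto X$; being a total map, its preimage does preserve the top, and under $u$ this is precisely the truth map $\truth_{\Pow(X)}$ of $\omegaBA^{\opp}$. Hence $\Pow\,\truth_X=u\circ\truth_{\Pow(X)}$ and $\Pow$ is a morphism of $\sigma$-effectuses.

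Finally, faithfulness is immediate: if $\Pow(f)=\Pow(g)$ for $f,g\colon X\rightharpoonup Y$, then taking preimages of singletons yields $\set{x | f(x)=y}=\set{x | g(x)=y}$ for every $y\in Y$, so $f$ and $g$ have the same domain of definition and agree there, i.e.\ $f=g$. There is no deep obstacle in this proposition; the only point demanding care is the nonempty-meet subtlety noted above, which is simultaneously what lets $\Pow(f)$ land in $\omegaBA$ for arbitrary partial $f$ and what is consistent with the totality (hence top-preservation) of the truth maps needed for the unit coherence to hold.
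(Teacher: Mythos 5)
Your proposal is correct and matches the paper's intent: the paper omits the proof entirely ("The following result can be easily verified"), and your argument is precisely that direct verification. In particular, your observation that $\Pow(f)$ preserves only \emph{nonempty} countable meets because $\Pow(f)(Y)=\Dom(f)$ need not be the top of $\Pow(X)$ is exactly the subtlety that motivates the paper's definition of morphisms in $\omegaBA$, and the remaining checks (coproducts via $\Pow(\coprod_j X_j)\cong\prod_j\Pow(X_j)$, unit coherence via totality of the truth maps, and faithfulness via singleton preimages) are all sound.
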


Composition of these last two faithful morphisms of $\sigma$-effectuses
yields the following result.

\begin{theorem}
    Let $\catC$ be a substate-separated $\sigma$-effectus with scalars $\{0,1\}$.
    Then there is a faithful morphism of $\sigma$-effectuses $G\colon\catC\rightarrow \omegaBA^\opp$.\qed
\end{theorem}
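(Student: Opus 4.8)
The plan is to obtain $G$ simply as the composite of the two faithful morphisms of $\sigma$-effectuses that have just been constructed. First I would invoke the theorem established immediately above, which says that a substate-separated $\sigma$-effectus with scalars $\catC(I,I)\cong\set{0,1}$ admits a faithful morphism of $\sigma$-effectuses $F\colon\catC\to\Pfn$; our hypotheses are exactly what that theorem requires. Then I would invoke Proposition~\ref{prop:powerset-functor}, which supplies the contravariant powerset functor as a faithful morphism of $\sigma$-effectuses $\Pow\colon\Pfn\to\omegaBA^\opp$. Setting $G\coloneqq\Pow\circ F$ gives the desired functor, and all that remains is to see that faithfulness and the morphism-of-$\sigma$-effectuses structure are preserved under composition.

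Faithfulness is immediate and formal: if $Gf=Gg$ then $\Pow(Ff)=\Pow(Fg)$, so $Ff=Fg$ by faithfulness of $\Pow$, and hence $f=g$ by faithfulness of $F$. For the $\sigma$-effectus structure I would check the two defining clauses of Definition~\ref{def:effectus} in turn. Preservation of countable coproducts is purely functorial, since $F$ and $\Pow$ each carry a coproduct cocone to a coproduct cocone, so their composite does as well. For unit preservation, writing $u\colon I_{\Pfn}\to FI_{\catC}$ and $v\colon I_{\omegaBA^\opp}\to\Pow\, I_{\Pfn}$ for the two structural isomorphisms, the composite $\Pow(u)\circ v\colon I_{\omegaBA^\opp}\to \Pow F I_{\catC}=G I_{\catC}$ is again an isomorphism, and the triangle $G\truth_A = (\Pow(u)\circ v)\circ\truth_{GA}$ follows by applying $\Pow$ to the triangle witnessing unit preservation for $F$ and then pasting in the triangle for $\Pow$.

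I do not expect a genuine obstacle here: both feeder results carry the substantive content --- the first through substate-separation via Proposition~\ref{prop:separation-faithful} and the equivalence $\sWMod[\set{0,1}]\simeq\Pfn$ of Proposition~\ref{prop:equiv-pfn-wmod}, the second through the order-theoretic behaviour of $\Pow$ --- so the only real task is the bookkeeping observation that morphisms of $\sigma$-effectuses are closed under composition. The one place to be slightly careful is chasing the two unit isomorphisms through in the correct direction, given that the arrows in $\omegaBA^\opp$ are reversed; once that is checked, the statement follows.
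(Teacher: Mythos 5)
Your proposal is correct and is exactly the paper's own proof: the paper obtains $G$ by composing the faithful morphism $F\colon\catC\to\Pfn$ from the preceding theorem with the faithful morphism $\Pow\colon\Pfn\to\omegaBA^\opp$ of Proposition~\ref{prop:powerset-functor}, treating the closure of faithful morphisms of $\sigma$-effectuses under composition as immediate. Your additional bookkeeping (faithfulness of the composite, coproduct preservation, and pasting the two unit isomorphisms) just makes explicit what the paper leaves to the reader.
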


\noindent
This does \emph{not} mean that
the predicates $\Pred(A)$ form a Boolean algebra,
but rather there is an injection
\[
\Pred(A)
\ \equiv\ 
\catC(A,I)
\ \rightarrowtail \
\omegaBA^\opp(GA,GI)
\ \cong\ 
\omegaBA(\set{0,1},GA)
\ \cong\ 
GA
\mpunct,
\]
so that predicates form a subset of the Boolean algebra $GA$.
In fact, we can prove that
the injection $\Pred(A)\rightarrowtail GA$ is a $\sigma$-additive map.
From this it follows that $\Pred(A)$ is an \emph{orthoalgebra}, i.e.~that
it has the property that $p\perp p$ implies $p=0$.

\begin{Auxproof}
\begin{example}\label{ex:notBoolean}
    Let $E = \{\emptyset, \{1,2\}, \{3,4\}, \{1,3\}, \{2,4\}, \{1,2,3,4\}\}$ be an effect algebra (and hence a $\{0,1\}$-module) with the operations inherited from $P(\{1,2,3,4\})$. Obviously $E$ embeds (as an effect algebra) in the Boolean algebra $P(\{1,2,3,4\})$. Yet $E$ itself is not a Boolean algebra: we see that $\{1,2\} \wedge \{1,3\} = \{1,2\} \wedge \{2,4\} = \emptyset$, and hence $E$'s lattice structure is not distributive: \\$\{1,2\} = \{1,2\} \wedge \{1,2,3,4\} = \{1,2\} \wedge (\{1,3\} \vee \{2,4\}) \neq (\{1,2\} \wedge \{1,3\}) \vee (\{1,2\} \wedge \{2,4\}) = \emptyset$.
\end{example}
\end{Auxproof}

\subsection{\texorpdfstring{$\sigma$}{sigma}-Effectus with probabilistic scalars}
\label{sec:OUS}

In this section we will show that
a $\sigma$-effectus with scalars
$[0,1]$ can be embedded into
the categories of certain ordered vector spaces,
under the assumption of the separation properties.
These ordered vector spaces are
order-unit spaces and (pre-)base-norm spaces,
which serve as abstract spaces of effects and of states,
respectively.
They have long been used in
GPT-style approaches to quantum theory
(also known as `convex operational' approaches);
see e.g.\ \cite{Ludwig1983,Ludwig1985,
DaviesL1970,Edwards1970}
and recent work
\cite{BarnumW2016,CassinelliL2016,Furber2017,Furber2018QPL}.

The embedding results are obtained as consequences of
representation results of
$\sigma$-effect $[0,1]$-modules and (cancellative)
$\sigma$-weight $[0,1]$-modules
into suitable order-unit spaces and (pre-)base-norm spaces.
The proofs of
Propositions~\ref{prop:sBOUS-equiv-sEMod},
\ref{prop:OVSt-equiv-CWMod}, and \ref{prop:sBBNS-equiv-sCWMod}
are deferred to Appendix~\ref{app:convex-proofs}.

We start by recalling the known representation result of
effect $[0,1]$-modules.
\begin{definition}
Let $A$ be an ordered vector space
(with positive cone $A_+$).
An \Define{order unit} of $A$
is a positive element $u\in A_{+}$
such that for all $x\in A$
there exists $n\in\N$ with $-n u \le x \le nu$.
A map $f\colon A\to B$ between
ordered vector spaces with order unit
(say $u_A\in A$ and $u_B\in B$)
is \Define{subunital} if $f(u_A)\le u_B$.
We write $\OVSu$ for the category of
ordered vector spaces with order unit
and subunital positive linear maps.
(A map $f\colon A\to B$ is \Define{positive} if $f(A_+)\subseteq B_+$.)
\end{definition}

Note that for each $(A,u)\in\OVSu$,
the unit interval $[0,u]_A=\set{a\in A| 0\le a \le u}$
is an effect $[0,1]$-module.
Conversely,
for each effect $[0,1]$-module $E$,
one can construct $(A,u)\in\OVSu$
such that $[0,u]_A\cong E$
\cite{gudder1998representation,JacobsMF2016}.
These constructions yield an equivalence of categories.

\begin{proposition}[{\cite[Theorem 14]{JacobsMF2016}}]
\label{prop:OVSu-equiv-EMod}
The functor $\OVSu\to\EMod[[0,1]]$
that sends $(A,u)$ to $[0,u]_A$ is an equivalence of categories.
\qed
\end{proposition}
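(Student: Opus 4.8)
The plan is to show that the functor $F\colon \OVSu\to\EMod[[0,1]]$, $F(A,u)=[0,u]_A$, is well-defined, fully faithful, and essentially surjective, from which the equivalence follows. On objects, well-definedness is exactly the fact recalled just before the statement, namely that each unit interval $[0,u]_A$ is an effect $[0,1]$-module. On morphisms, a subunital positive linear map $f\colon(A,u_A)\to(B,u_B)$ restricts to a map $[0,u_A]_A\to[0,u_B]_B$: for $a\in[0,u_A]$ positivity gives $0\le f(a)$ and monotonicity together with subunitality gives $f(a)\le f(u_A)\le u_B$. This restriction preserves the partial sum $\ovee$ (if $a\perp b$, i.e.\ $a+b\le u_A$, then $f(a+b)=f(a)+f(b)$) and the $[0,1]$-action ($f(\lambda a)=\lambda f(a)$ for $\lambda\in[0,1]$ by linearity), so it is a morphism in $\EMod[[0,1]]$. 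Functoriality is immediate. It then remains to establish the two defining properties of an equivalence.

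Essential surjectivity is precisely the representation result quoted in the excerpt: for every effect $[0,1]$-module $E$ there is an $(A,u)\in\OVSu$ with $[0,u]_A\cong E$, so every object of $\EMod[[0,1]]$ lies in the essential image of $F$. For faithfulness, I would exploit the order-unit axiom: given $x\in A$ choose $n\in\N$ with $-nu_A\le x\le nu_A$, so that $\tfrac{1}{2n}(x+nu_A)\in[0,u_A]$ and hence $x=2n\cdot\tfrac{1}{2n}(x+nu_A)-nu_A$ is an $\R$-linear combination of elements of $[0,u_A]$. Thus two subunital positive linear maps agreeing on $[0,u_A]$ agree on all of $A$ by linearity, giving faithfulness.

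Fullness is the main obstacle and carries the representation-theoretic content. Given a morphism $g\colon[0,u_A]_A\to[0,u_B]_B$ in $\EMod[[0,1]]$, I want a subunital positive linear $f\colon A\to B$ restricting to $g$. Since $[0,u_A]$ spans $A$ (by the decomposition above), such an $f$ is forced to be the $\R$-linear extension of $g$; the work is to show this extension is \emph{well-defined}, i.e.\ that every $\R$-linear relation $\sum_i \lambda_i a_i = 0$ among elements $a_i\in[0,u_A]$ is sent by $g$ to $\sum_i \lambda_i\, g(a_i)=0$. I expect this to reduce, using the order-unit structure to clear denominators and split signs, to the two generating identities that $g$ already satisfies by hypothesis: additivity $g(a\ovee b)=g(a)\ovee g(b)$ for $a\perp b$, and scalar compatibility $g(\lambda a)=\lambda\, g(a)$ for $\lambda\in[0,1]$. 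Carrying out this reduction carefully — and checking it against the explicit vector space built in the cited representation theorem — is the technical heart; once it is in place, positivity of $f$ follows because each $x\ge 0$ can be written as $\lambda a$ with $\lambda\ge 0$ and $a\in[0,u_A]$, whence $f(x)=\lambda\, g(a)\ge 0$, and subunitality follows from $f(u_A)=g(u_A)\le u_B$, using that morphisms of $\EMod$ are themselves subunital. Combining essential surjectivity with full faithfulness yields that $F$ is an equivalence of categories.
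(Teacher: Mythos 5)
The paper offers no proof of this proposition: it is imported verbatim from \cite[Theorem~14]{JacobsMF2016}, and the proof there (mirrored by this paper's own proof of the analogous Proposition~\ref{prop:OVSt-equiv-CWMod} in Appendix~\ref{app:convex-proofs}) constructs an explicit quasi-inverse $\EMod[[0,1]]\to\OVSu$ by a totalization construction: one forms the free commutative monoid on the effect module, quotients by the congruence making $\ovee$ into $+$, group-completes, and equips the result with a positive cone and scalar action. Your route is genuinely different: you verify the abstract criterion of fully faithful plus essentially surjective, outsourcing essential surjectivity to the representation theorem quoted just before the statement, and handling faithfulness by the (correct) observation that $[0,u_A]$ spans $A$. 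What this buys is that you never have to construct an inverse functor or check functoriality and naturality for it; what it costs is that all the substantive content gets concentrated into fullness, which is exactly the step you leave as a sketch rather than carry out.

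That step is a genuine piece of work, and it closes more cleanly than by analysing arbitrary $\R$-linear relations among elements of $[0,u_A]$, as you propose. Extend $g\colon[0,u_A]_A\to[0,u_B]_B$ in two stages. On the cone: for $x\in A_+$ pick $n\in\N$ with $x\le nu_A$ and set $f(x)\coloneqq n\,g(x/n)$; this is independent of the choice of $n$ (if $x\le nu_A$ and $x\le mu_A$ with $n\le m$, then $g(x/m)=g\bigl((n/m)(x/n)\bigr)=(n/m)\,g(x/n)$, so $m\,g(x/m)=n\,g(x/n)$), additive (if $x+y\le nu_A$ then $x/n\perp y/n$ in $[0,u_A]_A$, so $f(x+y)=n\,g(x/n\ovee y/n)=n\,g(x/n)+n\,g(y/n)=f(x)+f(y)$), and $\Rpos$-homogeneous (check $\lambda\in(0,1]$ using the $[0,1]$-action, integer $\lambda$ using additivity, and compose). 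Then extend to $A=A_+-A_+$ by $f(x_1-x_2)\coloneqq f(x_1)-f(x_2)$; this is well defined because $x_1-x_2=y_1-y_2$ gives $x_1+y_2=y_1+x_2$, hence $f(x_1)+f(y_2)=f(y_1)+f(x_2)$ by cone additivity. Positivity, subunitality, and the fact that $f$ restricts to $g$ are then immediate, as you say. With this inserted your argument is complete; without it, fullness --- the real heart of the theorem --- remains unproven.
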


\begin{definition}
An \Define{order-unit space}
is an ordered vector space
$A$ with order unit $u$
satisfying the Archimedean property:
$nx\le u$ for all $n\in\N$ implies $x\le 0$.
Each order-unit space $(A,u)$
is equipped with the intrinsic
\Define{order-unit norm} given by
$
\norm{a}=\inf\set{r>0 | -ru\le a\le ru}
\mpunct.
$
A \Define{Banach order-unit space} is an
order-unit space that is complete with respect to
the order-unit norm.
\end{definition}

\begin{definition}
An ordered vector space $A$ is
\Define{monotone $\sigma$-complete}
if every ascending sequence $a_0\le a_1\le \dotsb$
in $A$ that is bounded above
has a supremum $\bigvee_{n=0}^{\infty} a_n$.
A map between
monotone $\sigma$-complete ordered vector spaces
is \Define{$\sigma$-normal}
if it preserves suprema of
ascending sequences that are bounded above.
We write $\sBOUS$ for the category of
monotone $\sigma$-complete Banach order-unit spaces
and $\sigma$-normal subunital positive linear maps.
\end{definition}

The equivalence
of Proposition~\ref{prop:OVSu-equiv-EMod}
can be restricted to the following one.

\begin{proposition}
\label{prop:sBOUS-equiv-sEMod}
There is an equivalence of categories $\sBOUS\simeq\sEMod[[0,1]]$.
\end{proposition}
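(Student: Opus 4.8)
The plan is to leverage the already-established equivalence $\OVSu \simeq \EMod[[0,1]]$ from Proposition~\ref{prop:OVSu-equiv-EMod} and restrict it to the $\sigma$-complete settings on both sides. The strategy is to show that under this equivalence, a $\sigma$-effect $[0,1]$-module corresponds precisely to a monotone $\sigma$-complete Banach order-unit space, and that $\sigma$-additive module maps correspond to $\sigma$-normal subunital positive maps. So the bulk of the work is checking that the existing functor $(A,u)\mapsto [0,u]_A$ and its (known) inverse both respect the extra $\sigma$-structure in both directions.

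\medskip
\noindent
First I would recall the inverse construction: given an effect $[0,1]$-module $E$, one builds an order-unit space $(A,u)$ with $[0,u]_A\cong E$ (via \cite{gudder1998representation,JacobsMF2016}). I would then verify the forward direction: if $(A,u)$ is a monotone $\sigma$-complete Banach order-unit space, then $[0,u]_A$ is $\omega$-complete as an effect algebra (an increasing bounded sequence in $[0,u]_A$ has a supremum in $A$ that lands back in $[0,u]_A$), hence a $\sigma$-effect algebra, and the scalar action of $[0,1]$ is $\sigma$-biadditive. By Lemma~\ref{lem:sigma-additive-equiv-omega-conti}, $\sigma$-additivity of a module map is the same as $\omega$-continuity, so a $\sigma$-normal map between the ambient spaces restricts to an $\omega$-continuous, hence $\sigma$-additive, map on unit intervals.

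\medskip
\noindent
The harder direction, which I expect to be the main obstacle, is the converse: starting from a $\sigma$-effect $[0,1]$-module $E$, I must show that the order-unit space $A$ reconstructed from it is automatically \emph{both} monotone $\sigma$-complete \emph{and} Banach (norm-complete). The $\omega$-completeness of $E$ should transfer to monotone $\sigma$-completeness of $A$ by exploiting that any ascending norm-bounded sequence in $A$ can, after affine rescaling, be brought into a shifted unit interval where suprema exist and then pulled back; the Archimedean property guarantees the supremum in $A$ is genuine. Norm-completeness is the more delicate point: I would argue that monotone $\sigma$-completeness of an Archimedean order-unit space forces completeness in the order-unit norm, using that a norm-Cauchy sequence can be dominated by a monotone sequence whose supremum provides the limit (a standard but nontrivial order-theoretic argument). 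I would also need to confirm that $\sigma$-additive module maps extend to $\sigma$-normal maps on the ambient spaces, which follows once the correspondence between $\omega$-suprema in $E$ and order-suprema in $A$ is pinned down.

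\medskip
\noindent
Finally I would assemble these observations into a statement that the equivalence of Proposition~\ref{prop:OVSu-equiv-EMod} restricts to a functor $\sBOUS\to\sEMod[[0,1]]$ that is full, faithful, and essentially surjective, invoking the general fact that an equivalence of categories restricts to an equivalence between full subcategories that correspond to one another under the functor. Since the real content is the mutual characterization of objects (and the matching of morphism classes via Lemma~\ref{lem:sigma-additive-equiv-omega-conti}), once both containments of object-classes are verified, essential surjectivity and fully-faithfulness are inherited from Proposition~\ref{prop:OVSu-equiv-EMod}. The detailed verifications are routine enough to defer to the appendix, as the statement indicates.
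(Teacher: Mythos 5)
Your proposal is correct and takes essentially the same route as the paper: restrict the equivalence $\OVSu\simeq\EMod[[0,1]]$ of Proposition~\ref{prop:OVSu-equiv-EMod}, matching objects by showing that monotone $\sigma$-completeness of the ambient space corresponds to $\omega$-completeness of the unit interval (via the rescaling/translation argument you sketch) and that the $[0,1]$-action is then automatically $\sigma$-biadditive, and matching morphisms via Lemma~\ref{lem:sigma-additive-equiv-omega-conti}. The one step you propose to prove by hand --- that the space reconstructed from a $\sigma$-effect $[0,1]$-module is Archimedean and norm-complete --- is exactly what the paper outsources to Wright's result that every monotone $\sigma$-complete ordered vector space with order unit is a Banach order-unit space, so your ``delicate point'' is real but already known.
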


This proves that $\sBOUS^\opp$ is a $\sigma$-effectus.
By Proposition~\ref{prop:separation-faithful},
we obtain the following result.

\begin{theorem}\label{thm:convex-effectus-embedding}
Let $\catC$ be a predicate-separated $\sigma$-effectus with scalars $\catC(I,I)\cong [0,1]$.
Then there is a faithful morphism of $\sigma$-effectuses $F\colon\catC\to \sBOUS^\opp$.
Furthermore, $\Pred(A)\cong [0,u]_{FA}$ for all $A\in \catC$. \qed
\end{theorem}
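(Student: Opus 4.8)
The plan is to assemble Theorem~\ref{thm:convex-effectus-embedding} from the three ingredients the excerpt has already put in place, treating the result as essentially a corollary. First I would invoke Proposition~\ref{prop:sBOUS-equiv-sEMod}, which gives an equivalence of categories $\sBOUS\simeq\sEMod[[0,1]]$. Passing to opposite categories, this yields $\sBOUS^\opp\simeq\sEMod[[0,1]]^\opp$. Since $\catC$ has scalars $M=\catC(I,I)\cong[0,1]$, the predicate morphism of $\sigma$-effectuses from Proposition in the effect-module section gives $\Pred\colon\catC\to\sEMod[[0,1]]^\opp$. Because an equivalence of categories preserves the relevant coproduct and unit structure (as already used to conclude $\sBOUS^\opp$ is a $\sigma$-effectus), composing $\Pred$ with the equivalence $\sEMod[[0,1]]^\opp\simeq\sBOUS^\opp$ produces a morphism of $\sigma$-effectuses $F\colon\catC\to\sBOUS^\opp$.

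The faithfulness is where the hypotheses come in. By Proposition~\ref{prop:separation-faithful}, predicate-separation of $\catC$ is precisely equivalent to the functor $\Pred\colon\catC\to\sEMod[[0,1]]^\opp$ being faithful. Since every equivalence of categories is faithful (indeed fully faithful), the composite $F$ is again faithful as a composite of faithful functors. So the separation hypothesis is used exactly once, to transfer faithfulness along Proposition~\ref{prop:separation-faithful}, and the equivalence supplies the rest for free.

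For the final sentence $\Pred(A)\cong[0,u]_{FA}$, I would trace an object $A$ through the two functors. Under $\Pred$, the object $A$ goes to the $\sigma$-effect $[0,1]$-module $\Pred(A)$, and under the equivalence of Proposition~\ref{prop:sBOUS-equiv-sEMod}, the image $FA\in\sBOUS^\opp$ is the monotone $\sigma$-complete Banach order-unit space whose unit interval recovers $\Pred(A)$. This is just the object-level content of the equivalence (the same correspondence $(A,u)\mapsto[0,u]_A$ underlying Proposition~\ref{prop:OVSu-equiv-EMod}, now restricted), so the isomorphism $\Pred(A)\cong[0,u]_{FA}$ of $\sigma$-effect $[0,1]$-modules is immediate from how the equivalence acts on objects.

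I do not expect any serious obstacle, since the heavy lifting has been relegated to Proposition~\ref{prop:sBOUS-equiv-sEMod} (whose proof is deferred to the appendix) and to the general machinery of Proposition~\ref{prop:separation-faithful}. The only point requiring mild care is the bookkeeping of opposite categories: one must check that the predicate functor genuinely lands in $\sEMod[[0,1]]^\opp$ and that composing with an equivalence of opposite categories still yields a \emph{morphism of $\sigma$-effectuses}, i.e.\ that countable-coproduct-preservation and unit-preservation survive the composition. Both hold because equivalences preserve all the structure in question and morphisms of $\sigma$-effectuses compose, so the theorem follows without further computation.
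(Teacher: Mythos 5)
Your proposal is correct and follows essentially the same route as the paper: the paper likewise treats this theorem as an immediate corollary of Proposition~\ref{prop:sBOUS-equiv-sEMod} (giving that $\sBOUS^\opp$ is a $\sigma$-effectus equivalent to $\sEMod[[0,1]]^\opp$) together with Proposition~\ref{prop:separation-faithful} applied to the predicate functor $\Pred\colon\catC\to\sEMod[[0,1]]^\opp$. Your additional bookkeeping remarks (equivalences are faithful and preserve coproducts and the unit, so the composite is a faithful morphism of $\sigma$-effectuses, and $\Pred(A)\cong[0,u]_{FA}$ is just the object part of the equivalence) are exactly the implicit content behind the paper's one-line deduction.
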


While this representation onto vector spaces uses the structure of the predicates in the effectus, we can dually find a representation using the structure of the states.
For this we will need a representation of ($\sigma$-)weight $[0,1]$-modules.

\begin{definition}
An \Define{ordered vector space with trace}\footnote{%
It is called a \emph{base ordered linear space}
in \cite{Pumpluen2002}
and a \emph{semi-base-norm space} in \cite{Cho2019PhD}.}
is an ordered vector space $V$
that is positively generated (i.e.\ $V=V_+-V_+$)
and equipped with
a linear functional $\tau\colon V\to \R$ 
called the \Define{trace}
that is \Define{strictly positive}
in the sense that $x>0$ implies $\tau(x)>0$.
A map $f\colon V\to W$ between
ordered vector spaces with trace
is \Define{trace-decreasing} if
$\tau_W(f(x))\le \tau_V(x)$ for all $x\in V_+$.
We write $\OVSt$ for the category of
ordered vector spaces with trace
and trace-decreasing positive linear maps.
\end{definition}

Each $(V,\tau)\in \OVSt$
defines a weight $[0,1]$-module
via its \Define{subbase}
$\sBase(V) =\set{x\in V_+ | \tau(x)\le 1}$, 
with weight $\weight{x}=\tau(x)$.
Clearly, $\sBase(V)$ is \Define{cancellative}
in the sense that $x\ovee y=x\ovee z$ implies $y=z$.
Writing $\CWMod[[0,1]]\hookrightarrow\WMod[[0,1]]$
for the full subcategory of cancellative weight $[0,1]$-modules,
we obtain a functor $\sBase\colon \OVSt\to\CWMod[[0,1]]$.
Conversely, for any cancellative weight $[0,1]$-module $X$
we can construct $V\in\OVSt$ such that $\sBase(V)\cong X$,
giving rise to an equivalence of categories.

\begin{proposition}
\label{prop:OVSt-equiv-CWMod}
The functor $\sBase\colon \OVSt\to\CWMod[[0,1]]$
is an equivalence of categories.
\end{proposition}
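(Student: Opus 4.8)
The plan is to exhibit an inverse functor up to natural isomorphism, mirroring the effect-module representation of Proposition~\ref{prop:OVSu-equiv-EMod}. Since $\sBase$ is already functorial and lands in $\CWMod[[0,1]]$, it suffices to construct a \emph{linearization} functor $L\colon\CWMod[[0,1]]\to\OVSt$ together with natural isomorphisms $\sBase\circ L\cong\id$ and $L\circ\sBase\cong\id$. Concretely, $L(X)$ will be the ordered vector space generated by $X$ subject to the weight-module relations, built as the Grothendieck group of the cone $\Rpos\cdot X$.

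First I would build the cone. Given a cancellative weight $[0,1]$-module $X$, set $C(X)=(\Rpos\times X)/\!\sim$, where $\sim$ is generated by the scaling relation $(\lambda r,x)\sim(r,\lambda\cdot x)$ for $\lambda\in[0,1]$, so that multiplying the scalar by $\lambda$ agrees with acting by $\lambda$ on the $X$-part. Writing $[r,x]$ for classes, define addition by rescaling to a summable pair,
\[
[r,x]+[s,y]=\bigl[\,r+s,\ \tfrac{r}{r+s}\cdot x\ \ovee\ \tfrac{s}{r+s}\cdot y\,\bigr],
\]
which is legitimate because $\weight{\tfrac{r}{r+s}x}\ovee\weight{\tfrac{s}{r+s}y}\le 1$, so the summability axiom for weight modules guarantees the $\ovee$ is defined; scalar multiplication is $t\cdot[r,x]=[tr,x]$. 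Using biadditivity of the action and of the weight one checks that $\sim$ is a congruence, that $+$ is independent of the chosen common denominator and is associative and commutative with unit $[0,x]$, and that $C(X)$ is an $\Rpos$-semimodule. Taking the Grothendieck group $V(X)$ of $C(X)$ yields a real vector space, positively generated by the image $V(X)_+$ of $C(X)$; the functional $\tau[r,x]=r\weight{x}$ is well defined and linear, and it is strictly positive since $\weight{x}=0$ forces $x=0$. This makes $(V(X),\tau)\in\OVSt$, and the assignment is functorial by extending a weight-module map $\Rpos$-homogeneously and additively.

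Next I would produce the unit $\eta_X\colon X\to\sBase(V(X))$, $x\mapsto[1,x]$, and show it is an isomorphism of weight modules. It lands in the subbase because $\tau[1,x]=\weight{x}\le 1$, and the addition formula together with the scaling relation show that it preserves $\ovee$, the action, and the weight. Surjectivity holds because any cone element with $\tau\le 1$ can be folded down to denominator $1$: using $(r\ovee s)\cdot x=r\cdot x\ovee s\cdot x$ one realises an $r$-fold multiple of $x$ inside $X$ whenever $r\weight{x}\le 1$. The one genuinely substantial point, and the expected main obstacle, is \emph{injectivity} of $\eta_X$, which is equivalent to \emph{cancellativity of the cone $C(X)$}; this is precisely where the hypothesis that $X$ is cancellative is used. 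I would prove it by reducing two equivalent classes to a common denominator $N$ and showing that an equality $[N,m]=[N,n]$ forces $m=n$, by tracking a chain of scaling relations and cancelling the common summands in $X$. Well-definedness of the cone addition and this cancellativity are the only substantive steps; everything else is routine diagram-checking.

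Finally I would verify essential surjectivity and full faithfulness, completing the equivalence. For $(V,\tau)\in\OVSt$, the counit $\varepsilon_V\colon V(\sBase(V))\to V$ induced by the inclusion $\sBase(V)\hookrightarrow V$ is an isomorphism: since $V$ is positively generated and $V_+=\Rpos\cdot\sBase(V)$, the cone of $V$ is exactly $C(\sBase(V))$, so $\varepsilon_V$ identifies their Grothendieck groups. Full faithfulness then follows formally from $\eta$ and $\varepsilon$ being natural isomorphisms; equivalently, one checks directly that a positive trace-decreasing linear map $V\to W$ is determined by, and freely determined by, its restriction to $\sBase(V)$ (which generates $V_+$ under $\Rpos$-scaling, hence generates $V$), with trace-decreasing corresponding to weight-decreasing. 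This yields the asserted equivalence $\sBase\colon\OVSt\simeq\CWMod[[0,1]]$.
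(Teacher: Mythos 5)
Your proposal is correct, and its overall strategy is the same as the paper's: both construct an inverse ``linearization'' functor by building the enveloping positive cone of $X$, embedding that cone into its Grothendieck group to obtain $V(X)\in\OVSt$ with trace induced by the weight, and both identify cancellativity of $X$ as the crucial hypothesis making this work. The difference lies in how the cone is presented. The paper, mirroring the effect-module case of Proposition~\ref{prop:OVSu-equiv-EMod}, first takes the totalization $\Tz(X)=\Mlt(X)/{\sim}$ of the underlying PCM \cite{jacobs2012coreflections} --- the free commutative monoid of formal sums $\sum_i n_i\cdot x_i$ modulo $1\cdot(x\ovee y)\sim 1\cdot x+1\cdot y$ and $1\cdot 0\sim 0$ --- and only afterwards extends the $[0,1]$-action to an $\Rpos$-action and extends the weight. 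You instead build the cone directly as $(\Rpos\times X)/{\sim}$ with the scaling relation, defining addition by renormalization to a common denominator; this exploits from the outset the weight-module axiom that $\weight{x}\perp\weight{y}$ implies $x\perp y$, which is exactly what makes your sum formula well defined. The two cones are isomorphic (in the totalization, too, every class can be folded down to a scalar multiple of a single element of $X$), so this is a repackaging rather than a new idea, but it has a payoff: surjectivity of the unit $\eta_X$ and the identification $V_+=\Rpos\cdot\sBase(V)$ used for the counit become nearly definitional, at the price of verifying by hand that the scaling relation is a congruence and that your addition is representative-independent and associative --- checks the paper outsources to the known totalization construction. One imprecision to fix: injectivity of $\eta_X$ is not literally equivalent to cancellativity of the cone $C(X)$. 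Injectivity of $X\to C(X)$ holds for \emph{any} weight $[0,1]$-module (if $\lambda a=\lambda b$ with $\lambda>0$, pick $n$ with $1/n\le\lambda$; then $\tfrac1n a=\tfrac1n b$, hence $a=b$ by summing $n$ copies), so what cancellativity of $X$ genuinely delivers is cancellativity of $C(X)$ --- needed so that $C(X)$ embeds into the Grothendieck group, that $V_+\cap(-V_+)=\{0\}$, and that $\tau$ is strictly positive. Your proof method for that step (reduce to a common denominator, cancel in $X$) is exactly right; just aim it at the cone rather than at $\eta_X$.
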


Each $(V,\tau)\in\OVSt$
is equipped with an intrinsic seminorm given by:
\[
\norm{x}\ =\ 
\inf\set{\tau(x_1)+\tau(x_2) |
x_1,x_2\in V_{+}
\;\;\textnormal{such that}\;\;
x=x_1-x_2}
\mpunct.
\]
Following Furber \cite{Furber2017},
we call $(V,\tau)$ a \Define{pre-base-norm space}
if the seminorm $\norm{-}$ is a norm
(i.e.\ $\norm{x}=0$ implies $x=0$).
It is a \Define{Banach pre-base-norm space}
if $V$ is complete with respect to the base norm.
To formulate the results below,
we introduce additional (non-standard) terminology.
A Banach pre-base-norm space
has a \Define{$\sigma$-closed subbase}
if for each countable family $(x_n)_{n\in\N}$
in $\sBase(V)$ with $\sum_{n\in\N}\tau(x_n)\le 1$,
the series $\sum_{n=0}^{\infty} x_n$
converges to an element in $\sBase(V)$.\footnote{%
This property is equivalent to
the assumption of the theorem of
Edwards and Gerzon~\cite{EdwardsG1970}.}

We write $\sBBNS\hookrightarrow \OVSt$
for the full subcategory of
Banach pre-base-norm spaces
with a $\sigma$-closed subbase,
and $\sCWMod[[0,1]]\hookrightarrow\sWMod[[0,1]]$
for the full subcategory of cancellative
$\sigma$-weight $[0,1]$-modules.
The equivalence of Proposition~\ref{prop:OVSt-equiv-CWMod}
can be restricted to these categories.

\begin{proposition}
\label{prop:sBBNS-equiv-sCWMod}
There is an equivalence of categories $\sBBNS\simeq\sCWMod[[0,1]]$.
\end{proposition}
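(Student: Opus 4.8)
The plan is to restrict the equivalence $\sBase\colon\OVSt\to\CWMod[[0,1]]$ of Proposition~\ref{prop:OVSt-equiv-CWMod} to the full subcategories $\sBBNS\hookrightarrow\OVSt$ and $\sCWMod[[0,1]]\hookrightarrow\sWMod[[0,1]]$. Fixing a quasi-inverse $G$ of $\sBase$, there are three things to verify: that $\sBase$ sends $\sBBNS$-objects into $\sCWMod[[0,1]]$, that $G$ sends $\sCWMod[[0,1]]$-objects into $\sBBNS$, and that the hom-set bijection of Proposition~\ref{prop:OVSt-equiv-CWMod} then restricts correctly. Once these hold, the restricted functor is fully faithful and essentially surjective, hence an equivalence.

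For the forward object direction, given $V\in\sBBNS$ I would equip $\sBase(V)$ with a $\sigma$-PAM structure by declaring a countable family $(x_n)$ in $\sBase(V)$ summable exactly when $\sum_n\tau(x_n)\le 1$ and setting its sum to be the norm-convergent series $\sum_n x_n$, which lands back in $\sBase(V)$ precisely by the $\sigma$-closed-subbase hypothesis. Since summability in $[0,1]$ means bounded finite partial sums and $\weight{x}=\tau(x)$, this matches the $\sigma$-weight-module axioms; the partition-associativity, unary-sum and limit axioms are then routine consequences of grouping and rearrangement of norm-convergent series of cone elements, and $\sigma$-additivity of the weight follows from continuity and linearity of $\tau$. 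Cancellativity is inherited from the ambient vector space, so $\sBase(V)\in\sCWMod[[0,1]]$, and this $\sigma$-structure visibly extends the finitary one supplied by Proposition~\ref{prop:OVSt-equiv-CWMod}.

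The converse object direction is where I expect the real work. Starting from $X\in\sCWMod[[0,1]]$ I form $V=G(X)\in\OVSt$ with $\sBase(V)\cong X$ and must upgrade $V$ to an object of $\sBBNS$. The main obstacle is \emph{completeness}: I would reduce norm-completeness to convergence of norm-absolutely-convergent series, decompose each term as $v_n=a_n-b_n$ with $a_n,b_n\in V_+$ and $\tau(a_n)+\tau(b_n)$ close to $\norm{v_n}$, rescale so that $(a_n)$ and $(b_n)$ become summable families in $X$, and invoke the $\sigma$-additive structure of $X$ to obtain $\bigovee_n a_n$ and $\bigovee_n b_n$, whence $\sum_n v_n=\bigovee_n a_n-\bigovee_n b_n$ converges. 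The same device shows the subbase is $\sigma$-closed, while strict positivity of $\tau$ together with the $\sigma$-structure forces the base seminorm to be a genuine norm. As the footnote signals that the $\sigma$-closed-subbase condition matches the hypothesis of the Edwards--Gerzon theorem, I would either cite that result or reproduce its completion argument here.

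Finally, the morphisms match automatically. For $V,W\in\sBBNS$ and a trace-decreasing positive linear $f\colon V\to W$ one has $\norm{x}=\tau(x)$ for $x\in V_+$, so $\norm{f(x)}=\tau_W(f(x))\le\tau_V(x)=\norm{x}$ on the positive cone and hence $\norm{f}\le 1$; a contraction is continuous and therefore preserves norm-convergent series, so $\sBase(f)$ preserves the $\sigma$-sums, i.e.\ is $\sigma$-additive. Thus every $\OVSt$-morphism between $\sBBNS$-objects restricts to a morphism of $\sCWMod[[0,1]]$, and conversely every $\sigma$-additive weight-decreasing map is in particular additive and so comes from a unique trace-decreasing positive linear map by Proposition~\ref{prop:OVSt-equiv-CWMod}. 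The hom-bijection of that proposition therefore restricts to a bijection on the relevant hom-sets, completing the equivalence $\sBBNS\simeq\sCWMod[[0,1]]$.
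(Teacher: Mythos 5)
Your overall route coincides with the paper's: restrict the finitary equivalence of Proposition~\ref{prop:OVSt-equiv-CWMod}; in the forward direction equip $\sBase(V)$ with the series-based $\sigma$-structure (legitimate by the $\sigma$-closed-subbase hypothesis); in the converse direction prove norm-completeness by splitting an absolutely convergent series into positive parts and summing them in the $\sigma$-weight module; and handle morphisms via norm-continuity of trace-decreasing positive maps. All of this is exactly the content of the paper's Lemma~\ref{lem:bbns-if-subbase-sigma-wmod} and the surrounding restriction argument. However, there is one genuine gap, in the converse object direction: your assertion that ``strict positivity of $\tau$ together with the $\sigma$-structure forces the base seminorm to be a genuine norm.''

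Strict positivity cannot be what drives this step: \emph{every} object of $\OVSt$ has a strictly positive trace by definition, yet pre-base-norm spaces form a proper subclass of $\OVSt$, so the norm property requires a separate argument, and it is the subtlest part of the paper's Lemma~\ref{lem:bbns-if-subbase-sigma-wmod}. The ingredient actually doing the work is \emph{cancellativity} of the $\sigma$-weight module together with partition-associativity of countable sums: given $x,y\in\sBase(V)$ with $\norm{x-y}=0$, one picks decompositions $x-y=w_n-z_n$ with $w_n,z_n\in V_+$ and $\tau(z_n)+\tau(w_n)\le 2^{-(n+1)}$, observes that $z_n\ovee w_{n+1}=z_{n+1}\ovee w_n$, and then rearranges the single countable sum $\bigovee_{n}(z_n\ovee w_{n+1})=\bigovee_n (z_{n+1}\ovee w_n)$ in two ways to get $z_0\ovee c = w_0 \ovee c$ where $c=(\bigovee_{n\ge 1}z_n)\ovee(\bigovee_{n\ge 1}w_n)$; cancelling $c$ yields $z_0=w_0$, hence $x=y$. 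Without this telescoping argument (or an appeal to the corresponding results of Furber or Edwards--Gerzon), the space $V=G(X)$ you construct is only an ordered vector space with trace whose seminorm might be degenerate, so it need not land in $\sBBNS$ at all. A secondary point you use implicitly and should make explicit: for essential surjectivity you need the series-based $\sigma$-structure on $\sBase(G(X))$ to agree with the one transported from $X$, which is the ``moreover'' clause of the paper's lemma (every $\sigma$-sum is the norm-limit of its partial sums, so the $\sigma$-extension of a cancellative weight $[0,1]$-module is unique); this is what turns the weight-module isomorphism $\sBase(G(X))\cong X$ into an isomorphism in $\sCWMod[[0,1]]$.
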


As $\sCWMod[[0,1]]$ is a full subcategory of $\sWMod[[0,1]]$, it
is a $\sigma$-effectus, and hence so is $\sBBNS$.
Combining Propositions~\ref{prop:sBBNS-equiv-sCWMod} and~\ref{prop:separation-faithful} we have the following result.

\begin{theorem}\label{thm:convex-effectus-embedding-2}
Let $\catC$ be a state-separated $\sigma$-effectus with scalars $[0,1]$
such that substates $\sSt(A)$ are cancellative.
Then there is a faithful morphism of $\sigma$-effectuses $G\colon\catC\to \sBBNS$.
Furthermore, $\sSt(A)\cong \sBase(GA)$ for all $A\in \catC$.
\qed
\end{theorem}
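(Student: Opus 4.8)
The plan is to obtain Theorem~\ref{thm:convex-effectus-embedding-2} as an essentially immediate consequence of the machinery already assembled, precisely mirroring the derivation of Theorem~\ref{thm:convex-effectus-embedding} from Proposition~\ref{prop:sBOUS-equiv-sEMod}, but now on the state side rather than the predicate side. First I would invoke the morphism of $\sigma$-effectuses $\sSt\colon\catC\to\sWMod[M^\opp]$ from the earlier proposition. Since the scalars are $M\cong[0,1]$, which is commutative, we have $M^\opp\cong M=[0,1]$, so $\sSt$ lands in $\sWMod[[0,1]]$. Because $\catC$ is state-separated and admits normalization (normalization holds since the scalars are $[0,1]$, by Theorem~\ref{thm:normalisation-equiv}), Proposition~\ref{prop:state-sep-equiv-substate-sep} tells us $\catC$ is also substate-separated, and then Proposition~\ref{prop:separation-faithful} gives that $\sSt\colon\catC\to\sWMod[[0,1]]$ is faithful.

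Next I would use the cancellativity hypothesis on the substates. By assumption each $\sSt(A)$ is cancellative, so the faithful functor $\sSt$ actually factors through the full subcategory $\sCWMod[[0,1]]\hookrightarrow\sWMod[[0,1]]$ of cancellative $\sigma$-weight $[0,1]$-modules. A faithful functor corestricted to a full subcategory containing its image remains faithful, so we get a faithful morphism of $\sigma$-effectuses $\sSt\colon\catC\to\sCWMod[[0,1]]$. I should check here that this corestriction still preserves the $\sigma$-effectus structure (countable coproducts and the unit), but this is inherited from the fullness of the embedding $\sCWMod[[0,1]]\hookrightarrow\sWMod[[0,1]]$ together with the fact noted in the text that $\sCWMod[[0,1]]$ is itself a $\sigma$-effectus as a full subcategory.

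Finally I would transport along the equivalence $\sBBNS\simeq\sCWMod[[0,1]]$ of Proposition~\ref{prop:sBBNS-equiv-sCWMod}. Composing $\sSt$ with (a quasi-inverse of) this equivalence yields a faithful morphism of $\sigma$-effectuses $G\colon\catC\to\sBBNS$, faithfulness being preserved because equivalences are faithful. For the last claim, the equivalence of Proposition~\ref{prop:OVSt-equiv-CWMod} is implemented on objects by $\sBase$, so unwinding the identifications gives $\sSt(A)\cong\sBase(GA)$ for each $A\in\catC$, exactly as in the corresponding clause of Theorem~\ref{thm:convex-effectus-embedding}.

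The genuinely substantive content here has all been discharged upstream: the representation/equivalence $\sBBNS\simeq\sCWMod[[0,1]]$ (Proposition~\ref{prop:sBBNS-equiv-sCWMod}), whose proof is deferred to the appendix, is where the analytic work on base norms, completeness, and the $\sigma$-closed subbase condition lives. The only point in the present argument that requires a moment's care — and which I expect to be the main (if minor) obstacle — is verifying that cancellativity of each $\sSt(A)$ really does let us corestrict $\sSt$ into $\sCWMod$ compatibly with all the preserved structure, in particular that the $\sigma$-weight-module coproducts computed in $\sWMod[[0,1]]$ of cancellative objects are again cancellative so that the corestricted functor genuinely preserves countable coproducts. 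Granting that, the theorem follows purely formally, and no further calculation is needed. \qed
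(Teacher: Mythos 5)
Your proposal is correct and follows essentially the same route as the paper, which derives the theorem immediately by combining the faithfulness of $\sSt$ (via Proposition~\ref{prop:separation-faithful}, using normalization to pass from state- to substate-separation), the factorization through $\sCWMod[[0,1]]$ given by the cancellativity hypothesis, and the equivalence $\sBBNS\simeq\sCWMod[[0,1]]$ of Proposition~\ref{prop:sBBNS-equiv-sCWMod}. Your extra care about the corestriction preserving countable coproducts (cancellativity of coproducts of cancellative modules, checked pointwise) is a point the paper leaves implicit, and it does go through.
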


\begin{remark}
Cancellativity of the substates follows when the effectus is predicate-separated, and hence any state- and predicate-separated $\sigma$-effectus with scalars $[0,1]$ embeds into both $\sBBNS$ and $\sBOUS^\opp$.
\end{remark} 

\section{Conclusion}

We introduced the notion of a $\sigma$-effectus and showed that when they allow normalization of states,
the scalars must be equal to $\{0\}$, $\{0,1\}$, or $[0,1]$. 
The first case was shown to lead to a trivial effectus. 
In the latter two cases we found that when 
operationally motivated state- and/or predicate-separation properties 
are satisfied, in the $\{0,1\}$ case the effectus embeds into 
the category of sets and partial functions, and thus is classical and 
deterministic, while in the $[0,1]$ case $\sigma$-effectuses embed 
into either a category of Banach order-unit spaces, 
or of Banach pre-base-norm spaces. 
We hence have found a dichotomy between deterministic and probabilistic models of physical theories from abstract categorical considerations.

For future work it might be interesting to consider what can be said about $\sigma$-effectuses when the normalization condition is dropped, which would allow for more complex scalars that can also represent `spatial' systems as in \cite{moliner2017space}.

A further open problem that needs to be addressed is whether the nice
categorical definition of an effectus in total form can be modified to
give a notion of an `$\sigma$-effectus in total form'
(see Remark~\ref{partial-total-remark}).
If this is the
case, then our results imply a natural categorical characterization of
Banach order-unit and pre-base-norm spaces.

\paragraph{Acknowledgements}

KC is supported by
ERATO HASUO Metamathematics for Systems Design Project
(No.~JPMJER1603), JST\@.

\bibliographystyle{eptcs}
\bibliography{main}

\appendix



\section{Proofs in Section \ref{sec:effectus-sigma-effectus}}
\label{appendix:proofs-sigma-effectus}

\begin{proof}[Proof of Proposition~\ref{prop:effect-algebra-PAM-iff-complete}]
\newcommand{\tbigovee}{\mathop{\tilde{\bigovee}}}
We write $\tbigovee$
for the given $\sigma$-PAM operation on $E$.
Let $a_0\leq a_1\leq \dotsb$ be an increasing sequence in $E$.
Let $b_0=a_0$ and
$b_{n+1}= a_{n+1}\ominus a_n$ for each $n\in\N$.
Then we have $a_{n}=\bigovee_{k\le n} b_k$,
and in particular, every finite subfamily
of $(b_{n})_{n\in\N}$ is summable.
Therefore the sum
$\tbigovee_{n\in\N} b_{n}$ exists.
We will prove that $\tbigovee_{n\in\N} b_{n}$
is a supremum of $(a_n)_n$.
We have
\[
\tbigovee_{n\in\N} b_{n}
\ =\ \qty\Big(\bigovee_{k\le n} b_n)\ovee \Big(\tbigovee_{k>n} b_k\Big)
\ =\ a_n\ovee \qty\Big(\tbigovee_{k>n} b_k)
\mpunct,
\]
so that $\tbigovee_{n\in\N} b_{n}$
is an upper bound of $(a_n)_n$.
Suppose that $c$ is an upper bound of $(a_n)_n$.
Then $\bigovee_{k\le n} b_k\le c$
for any $n\in\N$, and hence
the sequence $c^{\bot},b_0,\dotsc,b_n$
is summable for any $n\in \N$.
This implies that the sum
$c^{\bot}\ovee(\tbigovee_{n\in \N} b_n)$ exists.
Hence $\tbigovee_{n\in \N} b_n\le c$,
as desired.
Therefore $E$ is $\omega$-complete.
To verify that $\tbigovee$ coincides with the
canonical $\sigma$-PAM structure,
let $(x_j)_{j\in J}$ be a summable countable family.
If $J$ is finite,
it is clear that
$\tbigovee_j x_j=\bigovee_j x_j$.
If $J$ is infinite,
then we may assume $J=\N$
without loss of generality.
Then the same argument as above
proves
$\tbigovee_{n\in\N} x_{n}=\bigvee_{n\in\N}\bigovee_{k\le n} x_k$,
and the right-hand side coincides with
canonical $\bigovee_{n\in\N} x_n$.
\end{proof}

\begin{Auxproof}
\begin{lemma}
\label{lem:pam-equiv}
Let $M$ be a PCM.
Let $\bigovee$ be a
partial function that sends
countable families in $M$ to elements in $M$
such that the restriction of $\bigovee$
to finite families coincides with
the finite partial sum in the PCM $M$.
Then $(M,\bigovee)$ is a $\sigma$-PAM
if and only if
it satisfies the limit axiom
and the following `weak partition-associativity axiom':
\begin{itemize}
\item
For each countable family $(x_j)_{j\in J}$
and each countable partition $J=\biguplus_{k\in K} J_k$,
if $(x_j)_{j\in J}$ is summable, then
the family $(x_j)_{j\in J_k}$ is summable for each $k\in K$;
$(\bigovee_{j\in J_k} x_j)_{k\in K}$ is summable;
and
\[
\bigovee_{j\in J} x_j
=
\bigovee_{k\in K}
\bigovee_{j\in J_k} x_j
\mpunct.
\]
\end{itemize}
\end{lemma}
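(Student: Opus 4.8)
The plan is to prove the two directions separately, after first observing that the hypothesis --- that $\bigovee$ agrees with the finite partial sum on finite families --- already delivers the unary sum axiom for free, since a singleton is a finite family whose partial sum is its unique entry, and it also guarantees that the PCM structure induced by $\bigovee$ (via $x_1\ovee x_2=\bigovee_{i\in\{1,2\}}x_i$ and $0=\bigovee\emptyset$) coincides with the given one. Consequently I may freely use the standard facts about finite sums in a PCM: order-independence, that any subfamily of a summable finite family is summable, and that finite summability is downward closed under $\le$, i.e.\ if $s_i\le t_i$ and $\bigovee_i t_i$ is defined then $\bigovee_i s_i$ is defined.

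For the forward direction, assume $(M,\bigovee)$ is a $\sigma$-PAM. The limit axiom is one of the three defining axioms, so it holds verbatim. The weak partition-associativity axiom is exactly the ``only if'' half of the full partition-associativity axiom together with its sum equation, so it holds as well. This direction is immediate and requires no work.

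The backward direction is the substance. Assuming the limit axiom and weak partition-associativity, I must verify the full partition-associativity axiom, the other two $\sigma$-PAM axioms being already in hand. Its forward implication and sum equation are literally the weak axiom, so only the converse implication remains: given a partition $J=\biguplus_{k\in K}J_k$ with each $(x_j)_{j\in J_k}$ summable and with $(t_k)_{k\in K}$ summable, where $t_k\coloneqq\bigovee_{j\in J_k}x_j$, I must show $(x_j)_{j\in J}$ is summable. Here I would invoke the limit axiom, reducing the goal to showing $(x_j)_{j\in F}$ is summable for every finite $F\sse J$. Such an $F$ meets only finitely many blocks, say $J_{k_1},\dots,J_{k_m}$, giving $F=\biguplus_{i=1}^m(F\cap J_{k_i})$. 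Applying weak partition-associativity to the partition $J_{k_i}=(F\cap J_{k_i})\uplus(J_{k_i}\setminus F)$ shows each $s_i\coloneqq\bigovee_{j\in F\cap J_{k_i}}x_j$ is defined with $s_i\le t_{k_i}$, and applying it to $K=\{k_1,\dots,k_m\}\uplus(K\setminus\{k_1,\dots,k_m\})$ shows $(t_{k_1},\dots,t_{k_m})$ is summable. Downward closure under $\le$ then makes $\bigovee_{i=1}^m s_i$ defined, and finite partition-associativity gives $\bigovee_{j\in F}x_j=\bigovee_{i=1}^m s_i$, so $(x_j)_{j\in F}$ is summable. The limit axiom now yields summability of $(x_j)_{j\in J}$, and once summability is known the required sum equation follows from the (already assumed) weak partition-associativity.

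The main obstacle is precisely this converse implication. Its two key ingredients are the reduction to finite subfamilies via the limit axiom and the downward-closure of finite PCM summability under $\le$, which is what lets one descend from summability of the block-sums $t_{k_i}$ to summability of the partial block-sums $s_i$; everything else is a routine bookkeeping of partitions.
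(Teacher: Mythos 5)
Your proof is correct and takes essentially the same approach as the paper's: both reduce to the converse implication of partition-associativity, invoke the limit axiom to pass to a finite subset $F\subseteq J$, split each block $J_k$ along $F$, and use finite PCM associativity to conclude that $(x_j)_{j\in F}$ is summable. Your packaged ``downward closure under $\le$'' fact is exactly the paper's inline flattening step via $y_k\ovee z_k=\bigovee_{j\in J_k}x_j$, so the two arguments coincide in substance.
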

\begin{proof}
The unary sum axiom holds by
the assumption that $\bigovee$ is compatible
with the PCM structure.
We only need to show that
for each countable family
$(x_j)_{j\in J}$
and each countable partition $J=\biguplus_{k\in K} J_k$,
if the family $(x_j)_{j\in J_k}$ is summable for each $k\in K$
and $(\bigovee_{j\in J_k} x_j)_{k\in K}$ is summable,
then $(x_j)_{j\in J}$ is summable.
By the limit axiom,
it suffices to show that
$(x_j)_{j\in F}$
is summable for each finite subset $F\subseteq J$.
By the weak partition-associativity,
the sums $y_k\coloneqq \bigovee_{j\in J_k\cap F}
x_j$
and $z_k\coloneqq \bigovee_{j\in J_k\setminus F}
x_j$ are defined, and
$y_k\ovee z_k=\bigovee_{j\in J_k}x_j$.
Let $G=\set{k\in K| J_k\cap F\neq \emptyset}$.
Then also
\[
\bigovee_{k\in G} \bigovee_{j\in J_k}x_j
=\bigovee_{k\in G} y_k\ovee z_k
\]
is defined.
Since $G$ is finite,
the associativity of the PCM
implies that the sums
\[
\bigovee_{k\in G} y_k=
\bigovee_{k\in G}
\bigovee_{j\in J_k\cap F} x_j
=\bigovee_{j\in F} x_j
\]
are all defined, and hence
$(x_j)_{j\in F}$ is summable.
\end{proof}

\begin{lemma}
\label{lem:charact-eff-sigma}
An effectus $\catC$
is a $\sigma$-PAC, hence a $\sigma$-effectus,
if and only if the following conditions hold.
\begin{myenum}
\item\label{lem:charact-eff-sigma.coprod}
$\catC$ has countable coproducts.
\item\label{lem:charact-eff-sigma.jm}
For each object $A$
and each countable set $J$,
the partial projections
$\pproj_j\colon J \cdot A\to A$
from the copower $J \copow A$ of $A$ by $J$
(i.e.\ the $J$-fold coproduct) are jointly monic.
\item\label{lem:charact-eff-sigma.compat}
Let $(f_j\colon A\to B)_{j\in J}$
be a countable family of parallel morphisms.
If the family $(f_j\colon A\to B)_{j\in F}$
is compatible for each finite subset $F\subseteq J$,
then $(f_j\colon A\to B)_{j\in J}$ is compatible.
\end{myenum}
\end{lemma}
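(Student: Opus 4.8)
The plan is to prove the two implications separately, with essentially all of the work lying in the reverse direction, where one must manufacture the $\sigma$-additive enrichment out of the finitary structure already present in an effectus. Throughout I write $\nabla\colon J\copow B\to B$ for the codiagonal (the cotuple of identities on the $J$-fold copower). For the forward direction, suppose $\catC$ is a $\sigma$-PAC. Then (i) holds by definition. For (ii) I would invoke the standard identity $\id_{J\copow A}=\bigovee_{j\in J}\kappa_j\after\pproj_j$ valid in any $\sigma$-PAC \cite{ArbibM1980}: if $u,v\colon X\to J\copow A$ agree after every $\pproj_j$, then $u=\bigovee_j\kappa_j\pproj_j\after u=\bigovee_j\kappa_j\pproj_j\after v=v$ by $\sigma$-biadditivity of composition, so the partial projections are jointly monic. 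For (iii) I would record the correspondence that $(f_j\colon A\to B)_{j\in J}$ is compatible exactly when $(\kappa_j\after f_j)_{j\in J}$ is summable in $\catC(A,J\copow B)$, the witness being $\bigovee_j\kappa_j f_j$ and, conversely, $\pproj_i\after\bigovee_j\kappa_j f_j=f_i$; given finite compatibility, each finite subfamily of $(\kappa_j f_j)$ is summable, so the limit axiom makes the whole family summable and hence $(f_j)_{j\in J}$ compatible.

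For the reverse direction, assume $\catC$ is an effectus, hence a finPAC, satisfying (i)--(iii). I would declare a countable family $(f_j\colon A\to B)_{j\in J}$ \emph{summable} precisely when every finite subfamily is summable in the PCM $\catC(A,B)$. Using the finPAC axioms one first checks that for finite families summability and compatibility coincide (compatible implies summable by the compatible-sum axiom; summable implies compatible since, by untying, $\kappa_1 f\ovee\kappa_2 g$ is a witness). Combined with (iii) this shows finite summability of $(f_j)_{j\in J}$ is equivalent to its compatibility (the reverse using that a witness restricts along the partial projection $J\copow B\to F\copow B$ onto a finite block), producing a witness $h\colon A\to J\copow B$ that is \emph{unique} by the joint monicity (ii). I then define $\bigovee_{j\in J}f_j\coloneqq\nabla\after h$. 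That this extends the PCM structure is immediate: the pairwise witness is $\kappa_1 f\ovee\kappa_2 g$, and $\nabla\after(\kappa_1 f\ovee\kappa_2 g)=f\ovee g$.

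It then remains to verify the $\sigma$-PAM and $\sigma$-PAC axioms for this operation. The unary sum axiom is trivial, and the limit axiom holds by the very definition of summability. For $\sigma$-biadditivity of composition, precomposition by a fixed morphism $g$ sends the witness $h$ of $(f_j)$ to $h\after g$, giving one slot directly; postcomposition by $g$ uses the coproduct functor, with $(J\copow g)\after h$ witnessing $(g\after f_j)$ and the naturality $\nabla\after(J\copow g)=g\after\nabla$ yielding $\bigovee_j g\after f_j=g\after\bigovee_j f_j$. The compatible-sum axiom is immediate, since a compatible family is in particular finitely summable, and the untying axiom is inherited unchanged from the finPAC. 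Finally, once $\catC$ is shown to be a $\sigma$-PAC, the effectus conditions on the unit object $I$ (that $\catC(A,I)$ is an effect algebra and the two implications concerning $\truth$) are already in force, so $\catC$ is a $\sigma$-effectus with the same unit.

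I expect the partition-associativity axiom to be the main obstacle. Here I would exploit the canonical isomorphism $J\copow B\cong\coprod_{k\in K}(J_k\copow B)$ induced by a countable partition $J=\biguplus_{k\in K}J_k$. Given a witness $h$ for $(f_j)_{j\in J}$, the composite $\pproj_k\after h$ witnesses the block $(f_j)_{j\in J_k}$, so each block is summable with sum $\nabla_{J_k}\after\pproj_k\after h$; applying $\coprod_k\nabla_{J_k}$ to $h$ then witnesses the family of block-sums, and the identity $\nabla_J=\nabla_K\after\coprod_k\nabla_{J_k}$ (under the above iso) delivers the equality $\bigovee_j f_j=\bigovee_k\bigovee_{j\in J_k}f_j$. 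The converse direction reassembles a global witness from the block witnesses along the same iso. The delicate points, and precisely where (ii) and (iii) are needed, are confirming that the partial projections are compatible with the coproduct decomposition and that restriction to a block both preserves and reflects summability.
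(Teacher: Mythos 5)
Your overall construction is the same as the paper's: define the countable sum of a compatible family as $\nabla\after h$ where $h\colon A\to J\copow B$ is the (by (ii)) unique witness, observe that (iii) gives the limit axiom, check biadditivity by composing witnesses, and prove the forward half of partition-associativity exactly as you do, via $\pproj_{J_k}\after h$ for the blocks and $(\coprod_k\nabla)\after h$ for the family of block-sums. Up to that point your proposal and the paper's proof coincide.

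The gap is in the \emph{converse} direction of partition-associativity, which is precisely the hard part, and your one-sentence plan for it --- ``reassemble a global witness from the block witnesses along the same iso'' --- does not work as stated. The block witnesses $h_k\colon A\to J_k\copow B$ all share the \emph{domain} $A$, while the universal property of the coproduct $\coprod_k (J_k\copow B)$ only classifies maps \emph{out} of it; there is no cotupling available. The natural candidate for a global witness is a sum such as $\bigovee_k \kappa_{J_k}\after h_k$ in $\catC(A,J\copow B)$, but the summability of that countable family is exactly the kind of structure you are in the middle of constructing, so the argument is circular (and condition (iii) cannot be applied to the family $(h_k)_k$ directly, since it is stated only for parallel families with a common codomain). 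The paper sidesteps this entirely with a purely order/PCM-theoretic reduction (its Lemma~\ref{lem:pam-equiv}): for an operation $\bigovee$ extending the PCM structure and satisfying the limit axiom, the weak (forward) half of partition-associativity already implies the full axiom. Its proof is the real content you are missing: given summable blocks and summable block-sums, one shows every \emph{finite} subfamily $(f_j)_{j\in F}$ is summable by splitting each block as $J_k=(J_k\cap F)\uplus(J_k\setminus F)$, using the forward direction to write $\bigovee_{j\in J_k}f_j = y_k\ovee z_k$ with $y_k=\bigovee_{j\in J_k\cap F}f_j$, and then invoking finite PCM associativity to extract summability of $(y_k)_{k}$ over the finitely many blocks meeting $F$; the limit axiom then finishes. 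To repair your proof you should either import this lemma (and note that your definition of summability makes the limit axiom automatic), or replace the ``reassembly'' by a reduction to finite subfamilies via (iii) followed by an argument in the effect algebra $\Pred(A)$ using the effectus axiom that summability of $(\truth\after f_j)_{j\in F}$ implies summability of $(f_j)_{j\in F}$. As it stands, the step is not merely unproved but rests on a construction that cannot be carried out.
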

\begin{proof}
If an effectus $\catC$ satisfies
\ref{lem:charact-eff-sigma.coprod} and \ref{lem:charact-eff-sigma.jm},
then we can define a partial sum operation
on countable families $(f_j\colon A\to B)_{j\in J}$
by:
sum $\bigovee_{j\in J} f_j$
is defined when $(f_j)_j$ is compatible;
and in that case
$\bigovee_{j\in J} f_j=\nabla\circ f$,
where $f\colon A\to J\copow B$
is a unique morphism that witnesses compatibility
of $(f_j)_j$.
Note that composition
preserves the partial sums defined in this way,
in each argument.
Thus to show that $\catC$ is a $\sigma$-PAC,
we only need to check that each homset $\catC(A,B)$
is a $\sigma$-PAM.
Note that \ref{lem:charact-eff-sigma.compat}
says that the limit axiom holds.
Since the countable partial sum $\bigovee$
extends the PCM structure of $\catC$,
it suffices to show that it the
`weak partition-associativity axiom'
in Lemma~\ref{lem:pam-equiv} holds.
Let $(f_j\colon A\to B)_{j\in J}$ be a summable family,
compatible via $f\colon A\to J\cdot B$.
Let $\bigoplus_{k\in K} J_k=J$ be a partition.
Then for each $k\in K$,
$(f_j\colon A\to B)_{j\in J_k}$ is compatible
via
\[
A\xrightarrow{f} J\copow B
\xrightarrow{\pproj_{J_k}}
J_k\copow B
\]
where $\pproj_{J_k}$ is the obvious projection map.
Moreover, the family
$(\bigovee_{j\in J_k} f_j\colon A\to B)_{k\in K}$
is compatible via
\[
A\xrightarrow{f} J\copow B
\cong \coprod_{k\in K} J_k\copow B
\xrightarrow{\coprod_k \nabla}
\coprod_{k\in K} B
= K\copow B
\mpunct.
\]
It is then straightforward to verify
$\bigovee_{j\in J} f_j
=
\bigovee_{k\in K}
\bigovee_{j\in J_k} f_j$.
\end{proof}
\end{Auxproof}

To prove that $\sEMod[M]^{\opp}$
and $\sWMod[M]$ are $\sigma$-effectuses,
we use the following characterization of $\sigma$-effectuses
(cf.\ a characterization of $\sigma$-PACs given in \cite[\S\,5]{ArbibM1980}).

\begin{lemma}
\label{lem:charact-sigma-effectus}
Let $\catC$ be a category with
a distinguished object $I$ and
a family of maps $\truth_A\colon A\to I$.
Then $(\catC,I)$
forms an $\sigma$-effectus with truth maps $\truth_A\colon A\to I$
if and only if
the following hold.
\begin{myenum}
\item
$\catC$ has countable coproducts.
\item
$\catC$ has zero morphisms.
\item\label{lem:charact-sigma-effectus.jm}
For each object $A$
and each countable set $J$,
the partial projections
$\pproj_j\colon J \copow A\to A$
from the copower of $A$ by $J$
(i.e.\ the $J$-fold coproduct) are jointly monic.
\item\label{lem:charact-sigma-effectus.compat}
Let $(f_j\colon A\to B)_{j\in J}$
be a countable family of parallel morphisms.
If the family $(f_j\colon A\to B)_{j\in F}$
is compatible for each finite subset $F\subseteq J$,
then $(f_j\colon A\to B)_{j\in J}$ is compatible.
\item
$\truth_{A+B}=\cotup{\truth_A,\truth_B}\colon A+B\to I$
for all $A,B$.
\item
$\truth_B\circ f=0_{A I}$ implies $f=0_{A B}$
for all $f\colon A\to B$.
\item
For all $f,g\colon A\to B$,
if $\truth_B\circ f, \truth_B\circ g\colon A\to I$
are compatible,
then $f, g$ are compatible too.
\item
For each $p\colon A\to I$,
there exists a unique $p^{\bot}\colon A\to I$
such that $p,p^{\bot}$ are compatible and
$\nabla_I\circ\ptup{p,p^{\bot}}=\truth_A$,
where $\nabla_I\colon I+I\to I$ is the codiagonal
and $\ptup{p,p^{\bot}}\colon A\to I+I$ is a unique
(by \textup{\ref{lem:charact-sigma-effectus.jm}})
map satisfying $\pproj_1\circ \ptup{p,p^{\bot}}=p$
and $\pproj_2\circ \ptup{p,p^{\bot}}=p^{\bot}$.
\end{myenum}
\end{lemma}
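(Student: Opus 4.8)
The plan is to prove the two implications separately, with the bulk of the work in the `if' direction, where one must manufacture both the $\sigma$-PAM enrichment and the effect-algebra structure out of the bare categorical data of coproducts and truth maps.

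For the `only if' direction, suppose $(\catC,I)$ is already a $\sigma$-effectus. Conditions~(i) and~(ii) are immediate from Definition~\ref{def:sigma-pac}: a $\sigma$-PAC has countable coproducts and has zero maps coming from the $\sigma$-PAM enrichment. For~\ref{lem:charact-sigma-effectus.jm} I would use the standard $\sigma$-PAC identity $f=\bigovee_{j}\kappa_j\after\pproj_j\after f$ for any $f$ into a coproduct; then $\pproj_j\after f=\pproj_j\after g$ for all $j$ forces $f=g$, giving joint monicity. Condition~\ref{lem:charact-sigma-effectus.compat} is just the limit axiom of the homset $\sigma$-PAM together with the compatible-sum axiom, once one notes that for parallel families `compatible' and `summable' coincide. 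Condition~(v) holds because coprojections are total in an effectus, so $\truth_{A+B}\after\kappa_1=\truth_A$ and $\truth_{A+B}\after\kappa_2=\truth_B$, i.e.\ $\truth_{A+B}=\cotup{\truth_A,\truth_B}$; conditions~(vi) and~(vii) are verbatim axioms~(ii) and~(iii) of Definition~\ref{def:effectus}; and~(viii) is the existence and uniqueness of orthosupplements in the effect algebra $\Pred(A)$, rephrased through the codiagonal $\nabla_I$.

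For the `if' direction, the first step is to define the candidate structure: declare a countable family $(f_j\colon A\to B)_{j\in J}$ summable exactly when it is compatible, and set $\bigovee_j f_j=\nabla\after f$ for the (by~\ref{lem:charact-sigma-effectus.jm}, unique) mediating map $f\colon A\to J\copow B$; the zero maps are supplied by~(ii). I would then verify that each homset is a $\sigma$-PAM. The unary-sum axiom is trivial and the limit axiom is exactly~\ref{lem:charact-sigma-effectus.compat}. The crux is partition-associativity: given a partition $J=\biguplus_{k}J_k$ and a summable $(f_j)_{j\in J}$ witnessed by $f$, each subfamily over $J_k$ is witnessed by the composite $A\xrightarrow{f}J\copow B\to J_k\copow B$, and the family of partial sums is witnessed by $A\xrightarrow{f}J\copow B\cong\coprod_k(J_k\copow B)\xrightarrow{\coprod_k\nabla}\coprod_k B$, the two descriptions of $\bigovee_{j\in J}f_j$ agreeing by a diagram chase through the canonical iso $J\copow B\cong\coprod_k(J_k\copow B)$. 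The converse inclusion (summability of the whole family from that of the pieces) I would then deduce from the limit axiom plus finite PCM-associativity, so that only this one-sided form of the axiom needs the coproduct bookkeeping. Biadditivity of composition is easy: $(g\after f_j)_j$ is compatible via $(\coprod_j g)\after f$ and $(f_j\after h)_j$ via $f\after h$, with the codiagonal commuting past $g$ and $h$.

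It then remains to recover the $\sigma$-PAC and effectus axioms. The compatible-sum axiom holds by construction; untying follows since, from a witness $h$ of $f\ovee g$, the map $A\xrightarrow{h}B+B\xrightarrow{\kappa_1+\kappa_2}(B+B)+(B+B)$ witnesses compatibility of $\kappa_1\after f$ and $\kappa_2\after g$. Finally, conditions~(v) and~(viii) upgrade $\Pred(A)=\catC(A,I)$ to an effect algebra with top $\truth_A$ and orthosupplement $(\ )^\bot$ by the standard finitary effectus computation, while~(vi) and~(vii) are precisely the two remaining axioms of Definition~\ref{def:effectus}. I expect the main obstacle to be the partition-associativity step: marshalling the coherence of the isomorphisms $J\copow B\cong\coprod_k(J_k\copow B)$ with the codiagonals so that the two expressions for $\bigovee_{j\in J}f_j$ coincide on the nose rather than merely up to the canonical iso.
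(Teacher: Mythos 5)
Your proposal is correct and takes essentially the same approach as the paper: the paper's proof likewise declares a countable family summable iff it is compatible, defines $\bigovee_{j} f_j \coloneqq \nabla\circ f$ for the mediating map $f$ (unique by the joint-monicity condition~(iii)), and then defers the remaining routine verifications to Cho's thesis. The details you sketch---partition-associativity via the witnesses $\pproj_{J_k}\circ f$ and $(\coprod_k\nabla)\circ f$ with the converse reduced to the limit axiom, untying via cotupling, and the effect-algebra structure on $\catC(A,I)$ from (v), (vi) and (viii)---are precisely the checks the paper omits, and they match its auxiliary arguments for the analogous finitary characterization.
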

\begin{proof}
It is straightforward to verify the `only if' direction.
Conversely, when $\catC$ satisfies (i)--(viii), we define
addition on morphisms as follows.
A countable family of morphisms
$(f_j\colon A\to B)$ is summable iff it is compatible.
In that case, by the joint monicity
condition~\ref{lem:charact-sigma-effectus.jm},
there is a unique morphism $f\colon A\to J\cdot B$
such that $f_j=\pproj_j\circ f$ for all $j\in J$.
Then we define the sum by $\bigovee_{j\in J} f_j\coloneqq\nabla\circ f$,
where $\nabla\colon J\cdot B\to B$ is the codiagonal.
It is not hard to verify
that this addition on each homset satisfies the axioms of $\sigma$-PAMs,
$\sigma$-PACs, and $\sigma$-effectuses.
The details can be found in
\cite[Proposition~3.8.6 and Lemma~7.3.38]{Cho2019PhD}.
\end{proof}

\begin{proposition}
\label{prop:sEMod-sigma-effectus}
Let $M$ be a $\sigma$-effect monoid.
Then the opposite category $\sEMod[M]^{\opp}$ is a $\sigma$-effectus.
\end{proposition}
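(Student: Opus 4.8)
The plan is to verify the eight conditions (i)--(viii) of Lemma~\ref{lem:charact-sigma-effectus} for the category $\catC = \sEMod[M]^{\opp}$ with distinguished object $I = M$. The essential preliminary is that $\sEMod[M]$ has countable products computed as Cartesian products $\prod_{j\in J} A_j$ with pointwise $\sigma$-PAM structure and pointwise $M$-action. One checks that such a product is again a $\sigma$-effect $M$-module (the $\sigma$-PAM axioms and $\sigma$-biadditivity of the action hold coordinatewise, and $\omega$-completeness is inherited since suprema are taken pointwise) and that it satisfies the universal property of the product. Dually these furnish the countable coproducts of $\catC$ required in condition (i), with copower $J\copow A = \prod_{j\in J} A$; the zero module $\set{0}$ and the evident zero maps supply the zero morphisms of condition (ii).

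Throughout I would exploit the identification of predicates. A morphism $M\to A$ in $\sEMod[M]$ is determined by the image of $1\in M$, since $\phi(r) = \phi(r\cdot 1) = r\cdot\phi(1)$, and conversely every element of $A$ arises this way; hence $\Pred(A) = \catC(A,I) = \sEMod[M](M,A) \cong A$ as effect algebras, and under this identification $\truth_A$ corresponds to the top element $1_A\in A$. Conditions (v) and (viii) then reduce to statements about the effect-algebra structure of the products: $\truth_{A\times B}$ corresponds to $(1_A,1_B)$, matching $\cotup{\truth_A,\truth_B}$, while $p^{\bot}$ is read off as the orthosupplement of the element of $A$ corresponding to $p$. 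Condition (vi) follows because $\truth_B\after f = \falsity$ translates, for the underlying $\sEMod[M]$-map $g\colon B\to A$, to $g(1_B)=0$, whence $g(b)\le g(1_B)=0$ for every $b\le 1_B$ by monotonicity, so $g=0$.

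The heart of the argument lies in conditions (iii), (iv) and (vii), where the countable structure is genuinely used. The key translation is that a family $(f_j\colon A\to B)_{j\in J}$ in $\catC$, corresponding to $\sEMod[M]$-maps $(g_j\colon B\to A)_{j\in J}$, is compatible precisely when for every tuple $(b_j)_j\in\prod_J B$ the family $(g_j(b_j))_j$ is summable in $A$, the witnessing map being $(b_j)_j\mapsto\bigovee_j g_j(b_j)$. For condition (iv) I would note that finite compatibility of every $(f_j)_{j\in F}$ gives summability of every finite subfamily $(g_j(b_j))_{j\in F}$; the limit axiom of the $\sigma$-PAM $A$ then upgrades this to summability of the whole family, so the family is compatible. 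Joint monicity (iii) dualizes to joint epicity in $\sEMod[M]$ of the partial coprojections $\iota_j\colon A\to J\copow A$, which holds because any $(a_j)_j$ equals $\bigovee_j \iota_j(a_j)$ (a coordinatewise identity, summable by boundedness and the limit axiom), so two $\sigma$-additive maps agreeing on all $\iota_j$-images agree everywhere. For (vii) I would use that compatibility of the predicates $\truth_B\after f$ and $\truth_B\after g$ means $g_f(1_B)\perp g_g(1_B)$ in $A$; since $g_f(b)\le g_f(1_B)$ and $g_g(b')\le g_g(1_B)$, the standard effect-algebra fact that $a\le a'$, $b\le b'$, $a'\perp b'$ imply $a\perp b$ yields summability of $g_f(b)\ovee g_g(b')$ for all $b,b'$, hence compatibility of $f,g$.

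I expect the main obstacle to be bookkeeping rather than conceptual: carefully transporting the structure across the opposition $(-)^{\opp}$ and verifying that the naive pointwise product really is the categorical product in $\sEMod[M]$ and carries a genuine $\sigma$-effect module structure. The one place where the $\sigma$-additive (as opposed to merely finitely additive) hypotheses are indispensable is in (iii) and (iv), where the limit axiom is what allows countable compatibility to be reconstructed from finite compatibility; this is precisely the feature distinguishing the present statement from its finitary analogue in \cite[Proposition~3.4.10]{Cho2019PhD}.
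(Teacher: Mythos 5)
Your proposal is correct and takes essentially the same approach as the paper's proof: both invoke Lemma~\ref{lem:charact-sigma-effectus} with $I=M$ and verify its eight conditions using pointwise Cartesian products as the coproducts, the identification $\Pred(A)\cong A$ via evaluation at $1$, and a summability characterization of compatibility combined with the limit axiom to handle conditions (iii), (iv) and (vii). The only cosmetic difference is that you characterize compatibility of $(f_j)_j$ by summability of $(g_j(b_j))_j$ for \emph{every} tuple $(b_j)_j$, whereas the paper checks summability only at the unit tuple $(1)_j$; the two criteria are equivalent by monotonicity of additive maps and the limit axiom, and both proofs defer the same routine verification that the witnessing map $(b_j)_j\mapsto\bigovee_j g_j(b_j)$ is a morphism in $\sEMod[M]$.
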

\begin{proof}
We invoke Lemma~\ref{lem:charact-sigma-effectus}.
We take $I=M$ and $\truth_E\colon E\to M$
in $\sEMod[M]^{\opp}$ to be
the map $\truth_E\colon M\to E$
in $\sEMod[M]$ given by $\truth_E(s)=s\cdot 1$
\begin{myenum}
\item
$\sEMod[M]$ has all products
given by Cartesian products $\prod_j E_j$
with operations defined pointwise.
Thus $\sEMod[M]^{\opp}$ has all coproducts.
\begin{Auxproof}
Let $(E_{\lambda})_{\lambda\in\Lambda}$
be a family of $\sigma$-effect $M$-modules.
We claim that the Cartesian product
$\prod_{\lambda}E_{\lambda}$ with pointwise operations
is a product in $\sEMod[M]$.
Note that the $M$-action on $\prod_{\lambda} E_{\lambda}$
indeed preserves countable sums:
\begin{talign*}
(\bigovee_j r_j)\smul (a_{\lambda})_{\lambda}
&=((\bigovee_j r_j)\smul a_{\lambda})_{\lambda}
&&\text{by def.\ of $\smul$}
\\
&=(\bigovee_j r_j\smul a_{\lambda})_{\lambda}
\\
&=\bigovee_j (r_j\smul a_{\lambda})_{\lambda}
&&\text{by def.\ of $\bigovee$}
\\
&=\bigovee_j r_j\smul(a_{\lambda})_{\lambda}
&&\text{by def.\ of $\smul$}
\\
r\smul \bigovee_j (a_{\lambda j})_{\lambda}
&=r\smul (\bigovee_j a_{\lambda j})_{\lambda}
&&\text{by def.\ of $\bigovee$}
\\
&=(r\smul \bigovee_j a_{\lambda j})_{\lambda}
&&\text{by def.\ of $\smul$}
\\
&=(\bigovee_j r\smul a_{\lambda j})_{\lambda}
\\
&=\bigovee_j (r\smul a_{\lambda j})_{\lambda}
&&\text{by def.\ of $\bigovee$}
\\
&=\bigovee_j r\smul (a_{\lambda j})_{\lambda}
&&\text{by def.\ of $\smul$}
\end{talign*}
Let $(f_{\lambda}\colon B\to
E_{\lambda})_{\lambda\in \Lambda}$
be a family of morphisms in $\sEMod[M]$.
Then we define $\tup{f_{\lambda}}_{\lambda}\colon
D\to\prod_{\lambda}E_{\lambda}$
by $\tup{f_{\lambda}}_{\lambda}(b)=
(f_{\lambda}(b))_{\lambda}$.
Then $\tup{f_{\lambda}}_{\lambda}$
preserves countable sums:
\[\textstyle
\tup{f_{\lambda}}_{\lambda}
(\bigovee_j b_j)
=
(f_{\lambda}(\bigovee_j b_j))_{\lambda}
=
(\bigovee_j f_{\lambda}(b_j))_{\lambda}
=
\bigovee_j (f_{\lambda}(b_j))_{\lambda}
\mpunct.
\]
\end{Auxproof}
\item
The constant zero functions
are zero morphisms in $\sEMod[M]$,
and hence in $\sEMod[M]^{\opp}$.
\begin{Auxproof}
$0_{ED}\colon E\to D$ is a $\sigma$-additive $M$-module map:
\begin{talign*}
0_{ED} (\bigovee_j a_j)
&= 0_{D}
= \bigovee_j 0_D
= \bigovee_j 0_{ED} ( a_j)
\\
0_{ED} (ra)
&= 0_{D}
= r\smul 0_{D}
= r\smul 0_{ED} (a)
\end{talign*}
\end{Auxproof}
\item
Let $J$ be a countable set.
The partial projections
$\pproj_j\colon J\copow E\to E$
in $\sEMod[M]^{\opp}$
are
morphisms
$\pproj_j\colon E\to E^J$
in $\sEMod[M]$ that send
$x\in E$ to the $J$-tuple that has $0$
at every coordinate except $x$
at the $j$th coordinate.
If $f,g\colon E^J\to D$ in $\sEMod[M]$ satisfy
$f\circ \pproj_j=g\circ \pproj_j$
for all $j\in J$,
then
\[
f((x_j)_j)
=f(\bigovee\nolimits_{j} \pproj_j(x_j))
=\bigovee\nolimits_{j} f (\pproj_j(x_j))
=\bigovee\nolimits_{j} g (\pproj_j(x_j))
=\dotsb
=g((x_j)_j)
\mpunct.
\]
Therefore the maps $\pproj_j$
are jointly epic in $\sEMod[M]$
and hence jointly monic in the opposite.
\item
We prove that
a countable family
$(f_j\colon E\to D)_{j\in J}$
in $\sEMod[M]^{\opp}$
is compatible if and only if
$(f_j(1))_{j\in J}$ is summable in $E$.
By the limit axiom in $E$, this implies
\ref{lem:charact-sigma-effectus.compat}
of Lemma~\ref{lem:charact-sigma-effectus}.
Let $(f_j)_{j\in J}$ be a compatible family.
Then in $\sEMod[M]$,
there exists
a map $f\colon D^J\to E$ such that $f\circ\pproj_j=f_j$.
Since $(1)_{j\in J}\in D^J$ can be written
as $\bigovee_{j\in J} \pproj_j(1)$,
it follows that
the sum $\bigovee_{j\in J} f_j(1)=f(\bigovee_{j\in J} \pproj_j(1))$
is defined.
Conversely, if $(f_j(1))_{j\in J}$ is summable,
define a map $\ptup{f_j}_j\colon D^J\to E$
by $\ptup{f_j}_j((a_j)_j)=\bigovee_j f_j(a_j)$.
We can prove that $\ptup{f_j}_j\colon D^J\to E$
is a morphism in $\sEMod[M]$.
\begin{Auxproof}
\begin{align*}
\ptup{f_j}_j(\bigovee\nolimits_{k} (a_{jk})_j)
&=\ptup{f_j}_j((\bigovee\nolimits_{k} a_{jk})_j)
\\
&=\bigovee\nolimits_j f(\bigovee\nolimits_{k} a_{jk})
\\
&=\bigovee\nolimits_j \bigovee\nolimits_{k}  f(a_{jk})
\\
&=\bigovee\nolimits_{k} \bigovee\nolimits_j  f(a_{jk})
\\
&= \bigovee\nolimits_{k} \ptup{f_j}_j((a_{jk})_j)
\mpunct.
\end{align*}
It preserves the scalar multiplication:
\begin{align*}
\ptup{f_j}_j(r\smul (a_j)_j)
&=\ptup{f_j}_j((r\smul a_j)_j)
\\
&=\bigovee\nolimits_j f_j(r\smul a_j)
\\
&=\bigovee\nolimits_j r\smul f_j(a_j)
\\
&= r\smul \bigovee\nolimits_j f_j(a_j)
\\
&= r\smul\ptup{f_j}_j((a_j)_j)
\mpunct.
\end{align*}
\end{Auxproof}
Then $(f_j)_{j\in J}$ is compatible via $\ptup{f_j}_j$.
\item
$\tup{\truth_E,\truth_D}(s)
=(\truth_E(s),\truth_D(s))
=(s\cdot 1,s\cdot 1)
=s \cdot (1,1)
=\truth_{E\times D}(1) $
in $\sEMod[M]$
and thus $\cotup{\truth_E,\truth_D}=\truth_{E+ D}$
in $\sEMod[M]^{\opp}$.
\item
Let $f\colon E\to D$ be a morphism with
$\truth_D \circ f=0$ in $\sEMod[M]^{\opp}$.
It is a morphism $f\colon D\to E$ in $\sEMod[M]$,
which satisfies $0=0(1)=(f\circ \truth_D)(1)=f(1)$.
Then for any $a\in D$ we have $0\le f(a)\le f(1)=0$
and therefore $f$ is the constant zero function.
\item Let $f,g\colon E\to D$
be morphisms in $\sEMod[M]^{\opp}$
such that $\truth_D\circ f$ and $\truth_D\circ g$ are compatible.
By the characterization of the compatibility in point (iv),
$f(1)=(f\circ \truth_D)(1)$ and $g(1)=(g\circ \truth_D)(1)$
are summable in $E$.
Again by this characterization, $f$ and $g$ are compatible.
\item
This holds because
$p\mapsto p(1)$ defines
a bijection $\sEMod[M](M,E)\cong E$
that sends $\truth_E\colon M\to E$ to $1\in E$
and preserves sums $\ovee$,
where the sums in $\Pred(E)$
are defined by $p\ovee q\coloneqq\nabla_I\circ \ptup{p,q}$.
\qedhere
\end{myenum}
\end{proof}

\begin{proposition}
\label{prop:pred-functor-sigma-effectus}
Let $\catC$ be an effectus with scalars $M=\catC(I,I)$.
Then the assignment $A\mapsto \Pred(A)$
induces a morphism of $\sigma$-effectuses $\Pred\colon \catC\to\sEMod[M]^{\opp}$.
\end{proposition}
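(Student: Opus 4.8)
The plan is to verify the three defining conditions of a morphism of $\sigma$-effectuses for the contravariant assignment $\Pred$, reusing the description of $\sEMod[M]^{\opp}$ from Proposition~\ref{prop:sEMod-sigma-effectus}.

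First I would pin down the functor. Each $\Pred(A)=\catC(A,I)$ is a $\sigma$-effect algebra by the corollary following Proposition~\ref{prop:effect-algebra-PAM-iff-complete}, and the $M$-action $r\cdot p=r\circ p$ is $\sigma$-biadditive because composition in a $\sigma$-PAC is $\sigma$-biadditive; hence $\Pred(A)$ is a $\sigma$-effect $M$-module. For $f\colon A\to B$ the morphism $\Pred(f)$ of $\sEMod[M]^{\opp}$ is the map $(-)\circ f\colon\Pred(B)\to\Pred(A)$ in $\sEMod[M]$, which is $\sigma$-additive again by $\sigma$-biadditivity of composition and preserves the $M$-action since $(r\circ p)\circ f=r\circ(p\circ f)$; functoriality is immediate from associativity.

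Next I would dispatch unit preservation, the easy condition. Here $\Pred(I)=\catC(I,I)=M$ is the unit object of $\sEMod[M]^{\opp}$, so I take $u=\id$. Unwinding $\truth_{\Pred(A)}$ via Proposition~\ref{prop:sEMod-sigma-effectus} as $s\mapsto s\cdot\truth_A=s\circ\truth_A$, and observing that $\Pred(\truth_A)$ is precisely $(-)\circ\truth_A\colon M\to\Pred(A)$, the two coincide, so $\Pred(\truth_A)=u\circ\truth_{\Pred(A)}$.

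The substance is coproduct preservation. Since coproducts in $\sEMod[M]^{\opp}$ are Cartesian products in $\sEMod[M]$ (Proposition~\ref{prop:sEMod-sigma-effectus}), I would show that $\Phi\colon\Pred(\coprod_j A_j)\to\prod_j\Pred(A_j)$, $p\mapsto(p\circ\kappa_j)_j$, is an isomorphism of $\sigma$-effect $M$-modules. It is a bijection by the universal property of the coproduct and equals $\tup{\Pred(\kappa_j)}_j$, so it identifies the two cocones. That $\Phi$ preserves $0$, the $M$-action, and finite sums $\ovee$ is componentwise and immediate; the only real point is that $\Phi$ reflects summability.

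The hard part will be this reflection: if $(p_\lambda\circ\kappa_j)_\lambda$ is summable in $\Pred(A_j)$ for every $j$, then $(p_\lambda)_\lambda$ is summable in $\Pred(\coprod_j A_j)$. I would reduce to finite families by the limit axiom and $\omega$-completeness, and for a finite $F$ exploit that in a $\sigma$-PAC summability coincides with compatibility (the compatible-sum axiom gives one direction, the untying axiom the other). By the universal property of the coproduct, a witness $h\colon\coprod_j A_j\to F\copow I$ with $\pproj_\lambda\circ h=p_\lambda$ corresponds to a family $h_j\colon A_j\to F\copow I$ with $\pproj_\lambda\circ h_j=p_\lambda\circ\kappa_j$, and such $h_j$ exists exactly when $(p_\lambda\circ\kappa_j)_{\lambda\in F}$ is compatible; cotupling recovers $h=\cotup{h_j}_j$. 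Hence every finite subfamily of $(p_\lambda)_\lambda$ is summable, so $(p_\lambda)_\lambda$ is summable, and combined with the forward $\sigma$-additivity of each $(-)\circ\kappa_j$ this shows $\Phi$ preserves all countable sums. Therefore $\Phi$ is an isomorphism, $\Pred$ preserves countable coproducts, and all three conditions hold.
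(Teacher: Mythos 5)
Your proposal is correct and takes essentially the same route as the paper: the paper's proof consists of exactly the same three checks (well-definedness, unit preservation via $\Pred(I)=\catC(I,I)=M$ and $\Pred(\truth_A)=\truth_{\Pred(A)}$, and coproduct preservation via the canonical bijection $\catC(\coprod_\lambda A_\lambda,I)\cong\prod_\lambda\catC(A_\lambda,I)$), with the verification that this bijection is an isomorphism in $\sEMod[M]$ left as ``easy to see.'' Your only addition is to spell out that last step --- reflection of summability via the summability-equals-compatibility fact for $\sigma$-PACs and the limit axiom --- which is a legitimate filling-in of the detail the paper omits, not a different argument.
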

\begin{proof}
The well-definedness of the functor $\Pred\colon \catC\to\sEMod[M]^{\opp}$
is easy. It preserves the unit object:
we have
$\Pred(I)= \catC(I,I)=M$
and $\Pred(\truth_A)=(-)\circ \truth_A=\truth_{\Pred(A)}$.
It sends countable coproducts in $\catC$ to products in $\sEMod[M]$:
\[
\Pred(\coprod\nolimits_{\lambda} E_{\lambda})
\ =\ \catC(\coprod\nolimits_{\lambda} E_{\lambda},I)
\cong
\prod\nolimits_{\lambda}\catC(E_{\lambda},I)
\ =\ 
\prod\nolimits_{\lambda}\Pred(E_{\lambda})
\mpunct.
\]
It is easy to see that the bijection is indeed an isomorphism in $\sEMod[M]$.
\end{proof}

\begin{proposition}
\label{prop:sWMod-sigma-effectus}
Let $M$ be an $\sigma$-effect monoid.
Then the category $\sWMod[M]$ is a $\sigma$-effectus.
\end{proposition}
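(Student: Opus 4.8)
The plan is to verify the eight conditions of Lemma~\ref{lem:charact-sigma-effectus} for the category $\sWMod[M]$, taking the unit object to be $M$ itself (viewed as a $\sigma$-weight $M$-module with action given by multiplication and weight the identity map), and taking the truth map $\truth_X\colon X\to M$ to be the weight $\weight{-}$. Coproducts are the restricted products already appearing in Proposition~\ref{prop:wmod-effectus}, namely $\coprod_\lambda X_\lambda=\{(x_\lambda)_\lambda ; (\weight{x_\lambda})_\lambda \text{ summable}\}$ with pointwise operations, coordinatewise $M$-action, and weight $\weight{(x_\lambda)_\lambda}=\bigovee_\lambda\weight{x_\lambda}$; the coprojections $\kappa_i$ insert in the $i$-th slot (padding with $0$) and the partial projections $\pproj_i$ read off the $i$-th coordinate. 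Since the finitary statement (Proposition~\ref{prop:wmod-effectus}) already fixes the underlying effectus shape, the work lies in the countable conditions, and the whole argument parallels the proof of Proposition~\ref{prop:sEMod-sigma-effectus}, carried out covariantly.

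The technical heart is a characterization of compatibility that I would establish first: a countable family $(f_j\colon Y\to X)_{j\in J}$ is \emph{compatible} if and only if $\bigovee_j\weight{f_j(y)}\le\weight{y}$ for every $y\in Y$; in that case the mediating map is $y\mapsto(f_j(y))_j$, which lands in the coproduct precisely because the bound guarantees summability of the coordinate weights, and which is weight-decreasing by that same bound. Granting this, the easy conditions follow directly from the weight axioms of Definition~\ref{def:wpmod}: zero morphisms (ii) are the constant-$0$ maps; joint monicity (iii) holds because the copower embeds into the plain product, so two maps agreeing in every coordinate coincide; condition (v) is $\weight{(a,b)}=\weight{a}\ovee\weight{b}$; condition (vi) is immediate from ``$\weight{x}=0\Rightarrow x=0$''; and condition (vii) is automatic, since for $f,g\colon A\to B$ the compatibility of $\truth_B\circ f$ and $\truth_B\circ g$ unwinds (using that the weight on $M$ is the identity) to exactly the inequality $\weight{f(a)}\ovee\weight{g(a)}\le\weight{a}$ characterizing compatibility of $f$ and $g$.

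The two conditions with genuine countable content are the limit axiom (iv) and the orthosupplement (viii). For (iv), finite-subfamily compatibility gives $\bigovee_{j\in F}\weight{f_j(y)}\le\weight{y}$ for every finite $F\subseteq J$ and every $y$; taking the supremum over all finite $F$ --- which is exactly the countable sum $\bigovee_{j\in J}\weight{f_j(y)}$ by Definition~\ref{def:sigma-effect-algebra}, and exists since $M$ is $\omega$-complete --- preserves the bound, so the full family is compatible. For (viii) I would define $p^\bot$ pointwise by $p^\bot(a)=\weight{a}\ominus p(a)$, which is meaningful because $p(a)=\weight{p(a)}\le\weight{a}$ as $p$ is weight-decreasing, and which satisfies $p(a)\ovee p^\bot(a)=\weight{a}$ with the required uniqueness by cancellation in the effect algebra $M$; it is weight-decreasing and $M$-linear by the module laws, and its $\sigma$-additivity reduces to the identity $(\bigovee_j u_j)\ominus(\bigovee_j v_j)=\bigovee_j(u_j\ominus v_j)$ for $v_j\le u_j$ in $M$, which follows by applying partition-associativity to the family indexed by $J\times\{1,2\}$.

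I expect the main obstacle to be the repeated verification that the maps produced along the way --- the mediating map for compatibility and the orthosupplement $p^\bot$ --- are honest morphisms of $\sigma$-weight modules, i.e.\ simultaneously $\sigma$-additive, $M$-linear, and weight-decreasing. Each of these checks is the point where the countable structure is used essentially, and all of them rest on the $\sigma$-PAM axioms and $\omega$-completeness of the $\sigma$-effect monoid $M$ (together with Lemma~\ref{lem:sigma-additive-equiv-omega-conti}); everything else is routine bookkeeping mirroring the finitary case.
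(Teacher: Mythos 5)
Your proposal is correct and takes essentially the same route as the paper's proof: both invoke Lemma~\ref{lem:charact-sigma-effectus} with unit $M$ and truth maps $\truth_X=\weight{-}$, construct coproducts as the weight-summable subsets of products, rest the technical core on the same characterization of compatibility of $(f_j)_j$ via $\bigovee_j\weight{f_j(x)}\le\weight{x}$ (which yields the limit-axiom condition by taking suprema over finite subsets), and define the orthosupplement pointwise as $p^\bot(x)=\weight{x}\ominus p(x)$. The verifications you defer (that the mediating map and $p^\bot$ are genuine morphisms of $\sigma$-weight modules) are exactly the routine checks the paper carries out.
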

\begin{proof}
We invoke Lemma~\ref{lem:charact-sigma-effectus}.
We take $I=M$ and
define $\truth_X\colon X\to M$
in $\sWMod[M]$ by $\truth_X(x)=\weight{x}$.
\begin{myenum}
\item
First we show that
$\sWMod$ has countable coproducts.
For a countable family $(X_{\lambda})_{\lambda\in \Lambda}$ of objects,
we define the underlying set by
\[
\coprod_{\lambda\in \Lambda} X_{\lambda}
\ =\ 
\set[\Big]{(x_{\lambda})_{\lambda}\in \prod_{\lambda\in \Lambda} X_{\lambda}
| (\,\weight{x_{\lambda}}\,)_{\lambda\in\Lambda} \text{ is summable in $M$}}
\]
and the weight of $(x_{\lambda})_{\lambda}\in \coprod_{\lambda\in \Lambda} X_{\lambda}$ by
$\weight{(x_{\lambda})_{\lambda}}=
\bigovee_{\lambda\in \Lambda}\weight{x_{\lambda}}$.
This determines summability in $\coprod_{\lambda\in \Lambda} X_{\lambda}$:
a countable family $((x_{\lambda j})_{\lambda})_{j\in J}$
is summable if $(\,\weight{(x_{\lambda j})_{\lambda}}\,)_{j\in J}
=(\bigovee_{\lambda} \weight{x_{\lambda j}})_{j\in J}$
is summable in $M$.
We define the $\sigma$-PAM structure and
and $M$-action pointwise.
It is straightforward to verify
that $\coprod_{\lambda\in \Lambda} X_{\lambda}$
is a $\sigma$-weight module,
and that it is a coproduct with
coprojections $\kappa_{\lambda}\colon X_{\lambda}\to\coprod_{\lambda\in \Lambda} X_{\lambda}$
that sends each element $x\in X_{\lambda}$ to
the $\Lambda$-tuple with $0$ everywhere except $x$ at the $\lambda$th
coordinate.

\begin{Auxproof}
\textit{(The operations are well-defined.)}
Let $((x_{\lambda j})_{\lambda})_j$ be a summable family
of elements in $\coprod_{\lambda\in \Lambda} X_{\lambda}$.
This means that
$(\bigovee_{\lambda} x_{\lambda j})_{j\in J}$ is summable
in $M$.
Then for each $\lambda\in \Lambda$, the family $(x_{\lambda j})_j$
is summable in $X_{\lambda}$, and hence one can define
the sum pointwise: $\bigovee_j (x_{\lambda j})_{\lambda}\coloneqq
(\bigovee_j x_{\lambda j})_{\lambda}$.
The sum is a member of $\coprod_{\lambda\in \Lambda} X_{\lambda}$
because $\bigovee_{\lambda}\weight{\bigovee_j x_{\lambda j}}=
\bigovee_{\lambda}\bigovee_j \weight{x_{\lambda j}}$
is defined in $M$.

For any $(x_{\lambda})_{\lambda}\in \coprod_{\lambda\in \Lambda} X_{\lambda}$
and $r\in M$,
the tuple
$(rx_{\lambda})_{\lambda}$ is a member of $\coprod_{\lambda\in \Lambda} X_{\lambda}$
because $\bigovee_{\lambda}\weight{rx_{\lambda}}=\bigovee_{\lambda} r\weight{x_{\lambda}}=
r\cdot \bigovee_{\lambda} \weight{x_{\lambda}}$ is defined in $M$.

\textit{(The operations satisfy the axioms.)}
For a summable family $((x_{\lambda j})_{\lambda})_j$
in $\coprod_{\lambda\in \Lambda} X_{\lambda}$,
\begin{talign*}
\weight{\bigovee_j (x_{\lambda j})_{\lambda}}
&= \weight{(\bigovee_j x_{\lambda j})_{\lambda}}\\
&= \bigovee_{\lambda} \weight{\bigovee_j x_{\lambda j}}\\
&= \bigovee_{\lambda} \bigovee_j \weight{x_{\lambda j}}\\
&= \bigovee_j \bigovee_{\lambda} \weight{x_{\lambda j}}\\
&=\bigovee_j \weight{(x_{\lambda j})_{\lambda}}
\end{talign*}
and for $(x_{\lambda})_{\lambda}\in\coprod_{\lambda\in \Lambda} X_{\lambda}$
and $r\in M$,
\begin{talign*}
\weight{r\cdot (x_{\lambda})_{\lambda}}
&=\weight{(r\cdot x_{\lambda})_{\lambda}}\\
&=\bigovee_{\lambda}\weight{r\cdot x_{\lambda}}\\
&=\bigovee_{\lambda} r\cdot \weight{x_{\lambda}}\\
&=r\cdot \bigovee_{\lambda} \weight{x_{\lambda}}\\
&=r\cdot \weight{(x_{\lambda})_{\lambda}}
\end{talign*}
Thus weight $\weight{-}$
is a $\sigma$-additive $M$-module map,
which moreover reflect summability by definition.
It reflects zero, too:
if $\weight{(x_{\lambda})_{\lambda}}=\bigovee_{\lambda}\weight{x_{\lambda}}=0$,
then for each $\lambda\in \Lambda$, one has $\weight{x_{\lambda}}=0$
and hence $x_{\lambda}=0$, so $(x_{\lambda})_{\lambda}=0$.
Clearly the Unary Sum Axiom holds.
The Limit Axiom holds since
summability is defined via weight.
To see the Partition-Associativity,
let $((x_{\lambda j})_{\lambda})_{j\in J}$ be a countable family
in $\coprod_{\lambda\in \Lambda} X_{\lambda}$
Let $\biguplus_{k\in K} J_k=J$ be a countable partition.
Then
\begin{talign*}
&((x_{\lambda j})_{\lambda})_{j\in J} \quad\text{summable}
\\
&\iff
(\,\weight{(x_{\lambda j})_{\lambda}}\,)_{j\in J}
\quad\text{summable in $M$}
\\
&\iff
\forall k\in K\ldotp
(\,\weight{(x_{\lambda j})_{\lambda}})_{j\in J_k}
\quad\text{summable in $M$}
\quad\text{\&}\quad
(\,\bigovee_{j\in J_k} \weight{(x_{\lambda j})_{\lambda}}\,)_{k\in K}
\quad\text{summable in $M$}
\\
&\iff
\forall k\in K\ldotp
((x_{\lambda j})_{\lambda})_{j\in J_k}
\quad\text{summable}
\quad\text{\&}\quad
(\,\weight{\bigovee_{j\in J_k} (x_{\lambda j})_{\lambda}}\,)_{k\in K}
\quad\text{summable in $M$}
\\
&\iff
\forall k\in K\ldotp
((x_{\lambda j})_{\lambda})_{j\in J_k}
\quad\text{summable}
\quad\text{\&}\quad
(\bigovee_{j\in J_k} (x_{\lambda j})_{\lambda})_{k\in K}
\quad\text{summable}
\end{talign*}
In that case,
\[
\bigovee_{j\in J}(x_{\lambda j})_{\lambda}
=
\bigovee_{k\in K}
\bigovee_{j\in J_k}(x_{\lambda j})_{\lambda}
\]
since the sum is defined pointwise.

Moreover we have
\begin{talign*}
(\bigovee_{j} r_j)\cdot (x_{\lambda})_{\lambda}
&= ((\bigovee_{j} r_j)\cdot x_{\lambda})_{\lambda}
\\
&= (\bigovee_{j} r_j\cdot x_{\lambda})_{\lambda}
\\
&= \bigovee_{j} (r_j\cdot x_{\lambda})_{\lambda}
\\
&=\bigovee_{j} r_j\cdot (x_{\lambda})_{\lambda}
\\
r\cdot \bigovee_j (x_{\lambda j})_{\lambda}
&=r\cdot (\bigovee_j x_{\lambda j})_{\lambda}
\\
&=(r\cdot \bigovee_j x_{\lambda j})_{\lambda}
\\
&=(\bigovee_j r\cdot x_{\lambda j})_{\lambda}
\\
&=\bigovee_j (r\cdot x_{\lambda j})_{\lambda}
\\
&=\bigovee_{j} r\cdot (x_{\lambda j})_{\lambda}
\end{talign*}
Thus $M$-action is $\sigma$-biadditive.

Finally, we show that $\coprod_{\lambda}X_{\lambda}$ is a coproduct.
It is straightforward to check that
the coprojections
$\kappa_j\colon X_{\lambda}\to \coprod_{\lambda}X_{\lambda}$
are morphisms in $\sWMod[M]$.
In fact, they are weight-preserving.
To show the universality of the coproduct,
let $f_{\lambda}\colon X_{\lambda}\to Y$
be morphisms in $\sWMod$.
We define a cotuple
$\cotup{f_{\lambda}}_{\lambda}\colon
\coprod_{\lambda}X_{\lambda}\to X_{\lambda}$ by
\[
\cotup{f_{\lambda}}_{\lambda}((x_{\lambda})_{\lambda})=
\bigovee\nolimits_{\lambda} f_{\lambda}(x_{\lambda})
\mpunct.
\]
The sum is defined because
$(\,\weight{x_{\lambda}}\,)_{\lambda}$ is summable in $M$
and $\weight{f_{\lambda}(x_{\lambda})}\le
\weight{x_{\lambda}}$ and hence
$(\,\weight{f_\lambda(x_{\lambda})}\,)_{\lambda}$ is summable in $M$.
The cotuple $\cotup{f_{\lambda}}_{\lambda}$
is weight-decreasing:
\[
\weight{\cotup{f_{\lambda}}_{\lambda}((x_{\lambda})_{\lambda})}
\ =\ 
\weight{\bigovee\nolimits_{\lambda} f_{\lambda}(x_{\lambda})}
\ =\ 
\bigovee\nolimits_{\lambda} \weight{f_{\lambda}(x_{\lambda})}
\le
\bigovee\nolimits_{\lambda} \weight{x_{\lambda}}
\ =\ 
\weight{(x_{\lambda})_{\lambda}}
\]
Thus it preserves summability.
It preserves sums:
\begin{talign*}
\cotup{f_{\lambda}}_{\lambda}(
\bigovee_j (x_{\lambda j})_{\lambda})
&=
\cotup{f_{\lambda}}_{\lambda}(
(\bigovee_j x_{\lambda j})_{\lambda})
\\
&=
\bigovee_{\lambda} f_{\lambda}
(\bigovee_j x_{\lambda j})
\\
&=
\bigovee_{\lambda}\bigovee_j
f_{\lambda}(x_{\lambda j})
\\
&=
\bigovee_j
\bigovee_{\lambda}
f_{\lambda}(x_{\lambda j})
\\
&=
\bigovee_j
\cotup{f_{\lambda}}_{\lambda}(
(x_{\lambda j})_{\lambda})
\end{talign*}
It preserves $M$-action:
\begin{talign*}
\cotup{f_{\lambda}}_{\lambda}(
r\cdot  (x_{\lambda})_{\lambda})
&=
\cotup{f_{\lambda}}_{\lambda}(
(r\cdot  x_{\lambda})_{\lambda})
\\
&=
\bigovee_{\lambda}
f_{\lambda}(r\cdot  x_{\lambda})
\\
&=
\bigovee_{\lambda}
r\cdot f_{\lambda} (x_{\lambda})
\\
&=
r\cdot (\bigovee_{\lambda}
f_{\lambda} (x_{\lambda}))
\\
&=
r\cdot \cotup{f_{\lambda}}_{\lambda}(
 (x_{\lambda})_{\lambda})
\end{talign*}
Therefore $\cotup{f_{\lambda}}_{\lambda}$
is a morphism in $\sWMod$.
It is clear that
$\cotup{f_{\lambda}}_{\lambda}\circ\kappa_{\lambda}=
f_{\lambda}$
Suppose that a morphism
$g\colon \coprod_{\lambda} X_{\lambda}\to Y$
in $\sWMod$
satisfies
$g\circ\kappa_{\lambda}= f_{\lambda}$
for all $\lambda$.
Then
\begin{talign*}
g((x_{\lambda})_{\lambda})
&=g(\bigovee_{\lambda} \kappa_{\lambda}(x_{\lambda}))\\
&=\bigovee_{\lambda}
g(\kappa_{\lambda}(x_{\lambda}))\\
&=\bigovee_{\lambda} f_{\lambda}(x_{\lambda})\\
&=\cotup{f_{\lambda}}_{\lambda}((x_{\lambda})_{\lambda})
\end{talign*}
\end{Auxproof}

\item
The constant zero functions $0\colon X\to Y$
form zero morphisms in $\sWMod[M]$.

\item
The partial projections
$\pproj_{k}\colon
J\copow X\to X$
for $k\in J$ are given
by $\pproj_{k}((x_{j})_{j})=x_{k}$.
It is clear that these maps are jointly monic.

\item
Let $(f_{j}\colon X\to Y)_{j\in J}$ be a countable
family of morphisms in $\sWMod[M]$.
We claim that $(f_{j})_{j}$
is compatible if and only if
$\bigovee_{j}\weight{f_j(x)}$ is defined and
$\bigovee_{j}\weight{f_j(x)}\le \weight{x}$
for all $x\in X$.
This implies
\ref{lem:charact-sigma-effectus.compat}
of Lemma~\ref{lem:charact-sigma-effectus},
because $\bigovee_{j}\weight{f_j(x)}$
is the supremum of the sums $\bigovee_{j\in F}\weight{f_j(x)}$
for finite subsets $F\subseteq J$.
Suppose that $(f_{j})_{j}$
is compatible via $f\colon X\to J\copow Y$.
Then
for each $x\in X$,
one has
$\pproj_j(f(x))=
f_j(x)$,
and thus by definition of $\pproj_j$,
we have $f(x)=(f_{j}(x))_{j}$.
As $f$ is weight-decreasing,
\[
\weight{x}\ \ge\ \weight{f(x)}\ =\ \weight{(f_{j}(x))_{j}}
\ =\ \bigovee\nolimits_{j}\weight{f_x(x)}
\mpunct.
\]
Conversely, if $\bigovee_{j}\weight{f_j(x)}\le \weight{x}$
for all $x\in X$,
then we can show that the map $f\colon X\to J\cdot Y$
given by $f(x)=(f_{j}(x))_{j}$
is a well-defined morphism in $\sWMod[M]$
and that $(f_{j})_{j\in J}$ is compatible via $f$.
\begin{Auxproof}
In particular we have
\begin{talign*}
f(\bigovee_k x_k)
&= (f_j(\bigovee_k x_k))_j
\\
&= (\bigovee_k f_j(x_k))_j
\\
&= \bigovee_k (f_j(x_k))_j
\\
&= \bigovee_k f(x_k)
\end{talign*}
\end{Auxproof}

\item
$\truth_{X+Y}(x,y)=
\weight{(x,y)}=
\weight{x}\ovee\weight{y}=
\truth_X(x)\ovee\truth_Y(y)=
\cotup{\truth_{X},\truth_{Y}}(x,y)$.
\item
Suppose that $f\colon X\to Y$
satisfies $\truth_Y\circ f=0$.
For each $x\in X$,
we then have $\weight{f(x)}=0$ and hence $f(x)=0$.
Therefore $f=0$.
\item
Let $f,g\colon X\to Y$
be morphisms
such that $\truth_Y\circ f$ and $\truth_Y\circ g$ are compatible.
By the characterization of the compatibility
in point (iv),
$\weight{(\truth_Y\circ f)(x)}\ovee \weight{(\truth_Y\circ g)(x)}\le \weight{x}$
for all $x\in X$.
Hence $\weight{f(x)}\ovee \weight{g(x)}\le \weight{x}$ for all $x\in X$.
By the same characterization again, we have $f\perp g$.
\item
Let $p\in\sWMod[M](X,M)$.
Define $p^{\bot}\colon X\to M$
by $p^{\bot}(x)=\weight{x}\ominus p(x)$,
where $\weight{x}\ominus p(x)$ is the unique element in $M$
satisfying $(\weight{x}\ominus p(x))\ovee p(x)=\weight{x}$.
It is straightforward to check that
$p^{\bot}$ is a morphism in $\sWMod[M]$,
and a unique one that satisfies the required condition.
\begin{Auxproof}
Then it is $\sigma$-additive:
\begin{talign*}
p^{\bot}(\bigovee_j x_j)
&=
\weight{\bigovee_j x_j}
\ominus
p(\bigovee_j x_j)
\\
&=
\bigovee_j \weight{x_j}
\ominus
\bigovee_j p(x_j)
\\
&\overset{\star}=
\bigovee_j (\weight{x_j} \ominus p(x_j))
\\
&=
\bigovee_j p^{\bot}(x_j)
\end{talign*}
Here equality $\overset{\star}=$ holds
since
\[\textstyle
(\bigovee_j (\weight{x_j} \ominus p(x_j)))\ovee\bigovee_j p(x_j)
=
(\bigovee_j (\weight{x_j} \ominus p(x_j)))\ovee p(x_j))
=
\bigovee_j \weight{x_j}
\mpunct.
\]
$p^{\bot}$ preserves the $M$-action:
\[
p^{\bot}(rx)
=\weight{rx}\ominus p(rx)
=\weight{rx}\ominus p(rx)
\]
\end{Auxproof}
\qedhere
\end{myenum}
\end{proof}

The following lemma is the countable version
of \cite[Lemma~4.8]{Cho2015}
(or \cite[Lemma~3.2.5]{Cho2019PhD}).
It can be proved in the same manner as the finite case.

\begin{lemma}
\label{lem:decomposition-bijection}
Let $\catC$ be a $\sigma$-effectus,
and $\coprod_{\lambda\in\Lambda} B_{\lambda}$
a countable coproduct in $\catC$.
There is a bijective correspondence
between morphisms $f\colon A\to \coprod_{\lambda\in\Lambda} B_{\lambda}$
and families of morphisms
$(f_{\lambda}\colon A\to B_{\lambda})_{\lambda\in\Lambda}$
such that $(\truth \circ f_{\lambda})_{\lambda\in\Lambda}$
is summable in $\Pred(A)=\catC(A,I)$.
They are related via $f_{\lambda}=\pproj_{\lambda}\circ f$.
\qed
\end{lemma}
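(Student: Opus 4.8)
The plan is to transcribe the finite-case argument of \cite[Lemma~4.8]{Cho2015} (see also \cite[Lemma~3.2.5]{Cho2019PhD}) to the countable setting, exhibiting the correspondence as a pair of mutually inverse assignments
\[
\Phi\colon f\longmapsto (\pproj_\lambda\circ f)_{\lambda\in\Lambda},
\qquad
\Psi\colon (f_\lambda)_{\lambda\in\Lambda}\longmapsto \bigovee_{\lambda\in\Lambda}\kappa_\lambda\circ f_\lambda .
\]
Two structural facts about the countable coproduct will be used throughout, both being the evident countable analogues of facts used in the finite case and citable from \cite{Cho2019PhD,ArbibM1980}: first, the $\sigma$-PAC identity-resolution $\bigovee_{\lambda}\kappa_\lambda\circ\pproj_\lambda=\id_{\coprod_\lambda B_\lambda}$; and second, that truth maps are compatible with the coprojections, $\truth_{\coprod_\lambda B_\lambda}\circ\kappa_\mu=\truth_{B_\mu}$ for each $\mu$. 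I will also use repeatedly that composition is $\sigma$-biadditive, so countable sums may be pushed through composition on either side.

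For the forward direction, given $f\colon A\to\coprod_\lambda B_\lambda$ I set $f_\lambda=\pproj_\lambda\circ f$ and must check $(\truth\circ f_\lambda)_\lambda$ is summable. Composing the identity-resolution on the left with $\truth$ and invoking the cotuple property yields that $(\truth_{B_\lambda}\circ\pproj_\lambda)_\lambda$ is summable with sum $\truth_{\coprod_\lambda B_\lambda}$; composing this summable family on the right with $f$ then shows $(\truth\circ f_\lambda)_\lambda=(\truth_{B_\lambda}\circ\pproj_\lambda\circ f)_\lambda$ is summable, with sum $\truth\circ f$. Hence $\Phi$ indeed lands in the families of the required kind.

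For the converse I take a family with $(\truth\circ f_\lambda)_\lambda$ summable and consider the maps $\kappa_\lambda\circ f_\lambda\colon A\to\coprod_\lambda B_\lambda$. By the cotuple property $\truth\circ(\kappa_\lambda\circ f_\lambda)=\truth_{B_\lambda}\circ f_\lambda=\truth\circ f_\lambda$, so $(\truth\circ\kappa_\lambda\circ f_\lambda)_\lambda$ is summable; once I know this upgrades to summability of $(\kappa_\lambda\circ f_\lambda)_\lambda$ itself (see below), I define $f=\bigovee_\lambda\kappa_\lambda\circ f_\lambda$ and verify $\pproj_\mu\circ f=f_\mu$ using $\sigma$-additivity of $\pproj_\mu\circ(-)$ together with $\pproj_\mu\circ\kappa_\mu=\id$ and $\pproj_\mu\circ\kappa_\lambda=0$ for $\lambda\neq\mu$. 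Uniqueness of such an $f$ is immediate from joint monicity of the partial projections, so $\Psi$ is well defined. That $\Phi$ and $\Psi$ are mutually inverse then follows at once: $\Phi\Psi$ gives $\pproj_\mu\circ f=f_\mu$ as just verified, while $\Psi\Phi$ gives $\bigovee_\lambda\kappa_\lambda\circ\pproj_\lambda\circ f=(\bigovee_\lambda\kappa_\lambda\circ\pproj_\lambda)\circ f=\id\circ f=f$ by the identity-resolution and $\sigma$-biadditivity.

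The main obstacle is precisely the step just flagged: the effectus axiom $\truth\circ g\perp\truth\circ h\Rightarrow g\perp h$ is stated only for pairs, yet I need to conclude summability of the countable family $(\kappa_\lambda\circ f_\lambda)_\lambda$ from summability of $(\truth\circ f_\lambda)_\lambda$. The plan is to bootstrap the binary axiom to finite families by induction, using additivity of $\truth\circ(-)$ so that $\truth\circ(g_1\ovee\dotsb\ovee g_n)=\truth\circ g_1\ovee\dotsb\ovee\truth\circ g_n$ and the binary axiom applies to a partial sum paired with the next term; the limit axiom of the $\sigma$-PAM $\catC(A,\coprod_\lambda B_\lambda)$ then promotes finite summability to summability of the whole countable family. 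This interplay between the binary separation axiom and the limit axiom is the one genuinely $\sigma$-additive ingredient; everything else is a routine transcription of the finite argument.
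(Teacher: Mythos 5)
Your proposal is correct and takes essentially the same route as the paper's own proof: the same pair of assignments $f\mapsto(\pproj_\lambda\circ f)_\lambda$ and $(f_\lambda)_\lambda\mapsto\bigovee_\lambda\kappa_\lambda\circ f_\lambda$, the same identity resolution $\bigovee_\lambda\kappa_\lambda\circ\pproj_\lambda=\id$, and the same compatibility $\truth\circ\kappa_\lambda=\truth_{B_\lambda}$. The induction-plus-limit-axiom bootstrap of axiom (iii) that you flag as the genuinely $\sigma$-additive ingredient is precisely the step the paper compresses into the phrase ``by the axioms of a $\sigma$-effectus'', so your write-up simply makes that step explicit.
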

\begin{Auxproof}
First note that the family
$(\kappa_{\alpha}\circ \pproj_{\alpha}\colon
\coprod_{\lambda} B_{\lambda}\to
\coprod_{\lambda} B_{\lambda})_{\alpha\in\Lambda}$
is compatible via
$\coprod_{\alpha}\kappa_{\alpha}\colon
\coprod_{\lambda} B_{\lambda}
=
\coprod_{\alpha} B_{\alpha}
\to
\coprod_{\alpha}
(\coprod_{\lambda} B_{\lambda})$,
and hence the family is summable.
But then
$\bigovee_{\lambda}\kappa_{\lambda}\circ \pproj_{\lambda}
=\id_{\coprod_{\lambda} B_{\lambda}}$
is verified by composing $\kappa_{\lambda}\colon
B_{\lambda}\to \coprod_{\lambda} B_{\lambda}$
to each side.
Now, given $f\colon A\to \coprod_{\lambda} B_{\lambda}$,
define $f_{\lambda}\coloneqq \pproj_{\lambda}\circ f$.
Then
\[\textstyle
\truth\circ f
= \truth\circ (\bigovee_{\lambda}\kappa_{\lambda}\circ \pproj_{\lambda})
\circ f
= \bigovee_{\lambda}\truth\circ \kappa_{\lambda}\circ \pproj_{\lambda}\circ f
= \bigovee_{\lambda}\truth\circ f_{\lambda}
\mpunct,
\]
which shows that
$(\truth \circ f_{\lambda})_{\lambda}$ is summable.
because $\truth\circ\kappa_{\lambda}\circ f_{\lambda}=
\truth\circ f_{\lambda}$.

Conversely,
let $(f_{\lambda}\colon A\to B_{\lambda})_{\lambda\in\Lambda}$
be a family such that
$(\truth \circ f_{\lambda})_{\lambda\in\Lambda}$
is summable in $\catC(A,I)$.
Because $\truth\circ\kappa_{\lambda}\circ f_{\lambda}=
\truth\circ f_{\lambda}$,
the family
$(\truth \circ \kappa_{\lambda}\circ f_{\lambda})_{\lambda}$
is summable too.
By the axioms of a $\sigma$-effectus,
it follows that the family
$(\kappa_{\lambda}\circ f_{\lambda}\colon A\to \coprod_{\lambda} B_{\lambda})_{\lambda}$ is summable.
Thus we can define $f=\bigovee \kappa_{\lambda}\circ f_{\lambda}$.
It is easy to verify that the two constructions
are bijective.
\end{Auxproof}

\begin{proposition}
\label{prop:sst-functor-sigma-effectus}
Let $\catC$ be an $\sigma$-effectus with scalars $M=\catC(I,I)$.
Then the assignment $A\mapsto \sSt(A)$
induces a morphism of
$\sigma$-effectuses $\sSt\colon \catC\to\sWMod[M^{\opp}]$.
\end{proposition}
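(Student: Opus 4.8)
The plan is to check the three defining properties of a morphism of $\sigma$-effectuses for $\sSt=\catC(I,-)$, mirroring the finite case of \cite[Lemma~4.2.11]{Cho2019PhD}: well-definedness on morphisms (as a functor into $\sWMod[M^{\opp}]$), preservation of the unit object, and preservation of countable coproducts. Functoriality on underlying sets is immediate, since $\sSt(f)$ is post-composition $\omega\mapsto f\circ\omega$, and recall from the earlier example that each $\sSt(A)$ already carries the structure of a $\sigma$-weight $M^{\opp}$-module, with action $r\cdot\omega=\omega\circ r$ and weight $\weight{\omega}=\truth\circ\omega$.

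First I would verify that for $f\colon A\to B$ the map $\sSt(f)$ is a morphism in $\sWMod[M^{\opp}]$. Preservation of the $M^{\opp}$-action, $f\circ(\omega\circ r)=(f\circ\omega)\circ r$, is just associativity of composition, and $\sigma$-additivity of $f\circ(-)$ is exactly the $\sigma$-biadditivity of composition in the $\sigma$-PAC $\catC$. The only step that invokes the effectus structure is weight-decrease: since $\truth_B\circ f$ is a predicate on $A$ it lies below the top predicate $\truth_A$, so $\truth_B\circ f\ovee q=\truth_A$ for some $q\in\Pred(A)$; precomposing with $\omega$ and using additivity of $(-)\circ\omega$ yields $\weight{f\circ\omega}\ovee(q\circ\omega)=\weight{\omega}$, hence $\weight{\sSt(f)(\omega)}\le\weight{\omega}$.

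Next I would treat the unit. As sets $\sSt(I)=\catC(I,I)=M$, the induced weight is $\weight{\omega}=\truth_I\circ\omega=\omega$ (as $\truth_I=\id_I$ is the multiplicative unit of $M$), and the induced action is $r\cdot\omega=\omega\circ r=\omega\cdot r$ in $M$. These coincide exactly with the weight and action on the unit object $M^{\opp}$ of $\sWMod[M^{\opp}]$, so the identity serves as the isomorphism $u$. The unit triangle then commutes on the nose, since both $\sSt(\truth_A)$ and $u\circ\truth_{\sSt(A)}$ send $\omega$ to $\truth_A\circ\omega$.

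The main work, and the step I expect to be the crux, is coproduct preservation, which I would derive from Lemma~\ref{lem:decomposition-bijection}. Applied with domain $I$, that lemma gives a bijection between $\sSt(\coprod_\lambda A_\lambda)$ and families $(\omega_\lambda)_\lambda$ with $(\truth\circ\omega_\lambda)_\lambda=(\weight{\omega_\lambda})_\lambda$ summable in $M$; since $M^{\opp}$ has the same effect-algebra (hence the same canonical $\sigma$-PAM) structure as $M$, summability agrees, so these families are precisely the elements of the coproduct $\coprod_\lambda\sSt(A_\lambda)$ of Proposition~\ref{prop:wmod-effectus}. The comparison map is the cotuple $\cotup{\sSt(\kappa_\lambda)}_\lambda$, sending $(\omega_\lambda)_\lambda\mapsto\bigovee_\lambda\kappa_\lambda\circ\omega_\lambda$, which is a morphism of $\sigma$-weight modules by construction. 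Its set-theoretic inverse is $\omega\mapsto(\pproj_\lambda\circ\omega)_\lambda$, and I would check it is again a morphism: $\sigma$-additivity and action-preservation are pointwise and follow from associativity, while weight-preservation uses $\truth_{\coprod_\lambda A_\lambda}=\bigovee_\lambda\truth\circ\pproj_\lambda$. Mutual inverseness is then $\bigovee_\lambda\kappa_\lambda\circ\pproj_\lambda=\id$ and $\pproj_\mu\circ\kappa_\lambda=\delta_{\mu\lambda}\,\id$, so the comparison map is an isomorphism in $\sWMod[M^{\opp}]$ and $\sSt$ preserves countable coproducts. The only genuine subtleties are the weight-decrease argument and the matching of the two summability conditions; everything else is routine and parallels the finite case.
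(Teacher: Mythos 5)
Your proposal is correct and takes essentially the same route as the paper's proof: functoriality and unit preservation are handled identically, and coproduct preservation is derived from Lemma~\ref{lem:decomposition-bijection}, matching the summability condition there with the definition of coproducts in $\sWMod[M^{\opp}]$ and using the same comparison map $\omega\mapsto(\pproj_\lambda\circ\omega)_\lambda$. The only difference is that you spell out steps the paper dismisses as easy (the weight-decrease of $\sSt(f)$ via $\truth_B\circ f\le\truth_A$, and the verification that the bijection is an isomorphism of $\sigma$-weight modules), which is exactly the content of the paper's suppressed auxiliary calculations.
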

\begin{proof}
It is easy to see that the functor $\sSt\colon \catC\to
\sWMod[M^{\opp}]$ is well-defined.
It preserves the unit object as
 $\sSt(I)=\catC(I,I)=M$ and the truth maps as
$\sSt(\truth_X)=\truth_X\circ (-)=\weight{-}=\truth_{\sSt(X)}$.
Lastly, it also preserves countable coproducts:
we have a bijection between the underlying sets
\begin{align*}
\sSt(\coprod\nolimits_{\lambda} X_{\lambda})
\coloneqq\catC(I,\coprod\nolimits_{\lambda} X_{\lambda})
&
\;\;\overset{\mathclap{\text{Lem.~\ref{lem:decomposition-bijection}}}}{\cong}
\;\;
\set{(\omega_{\lambda})_{\lambda} \in
\prod\nolimits_{\lambda}\catC(I,X_{\lambda}) |
(\truth\circ\omega_{\lambda})_{\lambda} \text{ is summable in $\catC(I,I)$} }
\\
&\;\;=\;\;
\set{(\omega_{\lambda})_{\lambda} \in
\prod\nolimits_{\lambda}\sSt(X_{\lambda}) |
(\,\weight{\omega_{\lambda}}\,)_{\lambda} \text{ is summable in $M$} }
\\
&\;\;\eqqcolon\;\; \coprod\nolimits_{\lambda} \sSt(X_{\lambda})
\end{align*}
The bijection is indeed
an isomorphism in $\sWMod[M^{\opp}]$.
\begin{Auxproof}
$f\colon
\sSt(\coprod_{\lambda} X_{\lambda})\to \coprod_{\lambda} \sSt(X_{\lambda})$
is given by $f(\omega)=(\pproj_{\lambda}\circ\omega)_{\lambda}$.
It is weight-preserving:
\begin{talign*}
\weight{f(\omega)}
&=\bigovee_{\lambda}\weight{\pproj_{\lambda}\circ\omega} \\
&=\bigovee_{\lambda}\truth\circ\pproj_{\lambda}\circ\omega \\
&=\bigovee_{\lambda}\truth\circ\kappa_{\lambda}
\circ\pproj_{\lambda}\circ\omega \\
&=\truth\circ(\bigovee_{\lambda}\kappa_{\lambda}
\circ\pproj_{\lambda})\circ\omega \\
&=\truth\circ\id \circ\omega \\
&=\truth\circ\omega =\weight{\omega}
\end{talign*}
$\sigma$-additivity and preservation
of $M$-action are easy.
\end{Auxproof}
\end{proof}

\section{Proofs in Section~\ref{sec:separation}}
\label{app:proofs-separation-section}

\begin{proof}[Proof of Proposition~\ref{prop:state-sep-equiv-substate-sep}]
The `only if' direction is obvious.
For the `if' direction,
suppose that the effectus is substate-separated.
Let $f,g\colon A\to B$ be morphisms
such that $f\circ \omega=g\circ \omega$
for any $\omega\in\St(A)$.
We need to show that then $f=g$.
By substate separation it suffices to show that $f\circ \rho = g\circ \rho$ for all \emph{substates} $\rho\in\sSt(A)$.
Hence, let $\rho\in\sSt(A)$ be an arbitrary substate.
If $\rho=0$, then $f\circ\rho=0=g\circ \rho$.
Otherwise, if $\rho\neq 0$, let $\overline{\rho}$ be the normalization
of $\rho$, i.e.\ the state satisfying
$\overline{\rho}\circ \truth\circ \rho=\rho$.
By assumption on $f$ and $g$ we have $f\circ \overline{\rho} = g\circ\overline{\rho}$ and hence
$
f\circ \rho = f\circ \overline{\rho}\circ \truth\circ \rho
= g\circ \overline{\rho}\circ \truth\circ \rho
= g\circ \rho
$
as desired.
\end{proof}

\begin{proof}[Proof of Theorem~\ref{thm:normalisation-equiv}]
(i) $\Longrightarrow$ (ii):
Already holds for regular effectuses; see~\cite[Proposition~6.4]{Cho2015}
or \cite[Proposition~4.5.2]{Cho2019PhD}.

(ii) $\Longrightarrow$ (iii):
Suppose that $s\cdot t=0$ and $t\neq 0$.
As $s\cdot t \leq t$ there is a unique $(s\cdot t)/t$ 
satisfying $((s\cdot t)/t) \cdot t = s\cdot t = 0$.
But as both $0$ and $s$ have this property we conclude
that $s=(s\cdot t)/t = 0$.

(iii) $\Longrightarrow$ (i):
Let $\omega\colon I\to A$
be a nonzero substate.
We write $s\coloneqq (\truth\omega)^{\bot}$
and define
\[
\tilde{\omega}
\coloneqq\bigovee_{n=0}^{\infty}
\omega\circ s^n
\quad
\colon I\longrightarrow A
\mpunct.
\]
The sum is the \emph{iteration}
of the map $\kappa_1\circ s \ovee
\kappa_2\circ \omega\colon I\to I+A$
and hence exists, see \cite[Theorem~3.2.24]{ManesA1986}.

We prove that $\tilde\omega$ is the normalization of $\omega$.
First, we show that
$\tilde{\omega}$ is a state, i.e.\ a total map.
Let
$t\coloneqq\truth\circ \tilde{\omega}
\ =\  \bigovee_{n=0}^{\infty} s^{\bot}\cdot s^n$.
Then
\[
t\ =\ \bigovee_{n=0}^{\infty} s^{\bot}\cdot s^n
\ =\ s^{\bot}\ovee \qty\Big(\bigovee_{n=0}^{\infty} s^{\bot}\cdot s^n)\cdot s
\ =\ s^{\bot}\ovee t\cdot s
\mpunct.
\]
Since
$t=t\cdot (s\ovee s^{\bot})=t\cdot s\ovee t\cdot s^{\bot}$,
we obtain
$t\cdot s^{\bot}=s^{\bot}$
by cancellation.
Then $s^{\bot}=(t\ovee t^{\bot})\cdot s^{\bot}
=s^{\bot}\ovee (t^{\bot}\cdot s^{\bot})$,
so that $t^{\bot}\cdot s^{\bot}=0$.
Because $s^{\bot}=\truth\omega\neq 0$
and there are no nontrivial zero divisors,
$t^{\bot}=0$, that is,
$\truth\circ \tilde{\omega}=t=1$.
Next, we have
\[
\tilde{\omega}\cdot \truth\omega
\ =\ 
\bigovee_{n=0}^{\infty}\omega\cdot s^n\cdot s^{\bot}
\ =\ 
\omega\cdot
\bigovee_{n=0}^{\infty}s^{\bot}\cdot s^n
\ =\ 
\omega\cdot
1
\ =\ \omega
\mpunct.
\]
Here note that $s$ and $s^{\bot}$ commute.
To see the uniqueness of the normalization,
let $\rho$ be a state with $\omega=\rho\cdot \truth\omega
\;(=\rho\cdot s^{\bot})$.
Then
\[
\rho
\ =\ \rho\cdot 1
\ =\ \rho\cdot \paren[\Big]{\bigovee_{n=0}^{\infty} s^{\bot}\cdot s^n}
\ =\ \bigovee_{n=0}^{\infty} \rho\cdot s^{\bot}\cdot s^n
\ =\ \bigovee_{n=0}^{\infty} \omega\cdot s^n
\ =\ \tilde{\omega}
\mpunct.
\]
Therefore $\tilde{\omega}$ is the normalization
of $\omega$.

(iv) $\Longrightarrow$ (iii):
Let $s\cdot t=0$ for $s,t\in\catC(I,I)$.
Assume $t\neq 0$.
Because $t$ is an epi and
$s\circ t=0=0\circ t$,
we obtain $s=0$. This proves (iii).

(i) $\Longrightarrow$ (iv):
By what we have already proved,
we may assume that (ii) and (iii) hold.
Let $s\colon I\to I$ be a nonzero scalar.
Suppose that $\omega_1\circ s=\omega_2\circ s$
for $\omega_1,\omega_2\colon I\to A$.
If $\omega_1=0$, then
$\truth\circ\omega_2\circ s=0$.
Since $s$ is nonzero, we obtain $\truth\circ\omega_2=0$
by (iii), and hence $\omega_2=0$.
Similarly $\omega_2=0$ implies $\omega_1=0$.
Therefore it suffices to consider the case where
both $\omega_1$ and $\omega_2$ are nonzero.
Let
\[
t\ \coloneqq\  \truth\circ \omega_1\circ s\ =\ \truth\circ\omega_2\circ s
\mpunct.
\]
By (iii) it follows that $t$ is nonzero.
By division,
we have $\truth \omega_1=t/s = \truth \omega_2$.
By normalization,
there are states $\bar{\omega_1},\bar{\omega_2}\colon I\to X$
such that
$\omega_1=\bar{\omega_1}\circ\truth \omega_1$
and $\omega_2=\bar{\omega_2}\circ\truth \omega_2$.
Then
\begin{align*}
\bar{\omega_1}\circ t
&\ =\  \bar{\omega_1}\circ \truth \omega_1\circ s \\
&\ =\  \omega_1\circ s \\
&\ =\  \omega_2\circ s \\
&\ =\  \bar{\omega_2}\circ \truth \omega_2\circ s \\
&\ =\  \bar{\omega_2}\circ t
\mpunct.
\end{align*}
Since
$\bar{\omega_1}\circ t=\bar{\omega_2}\circ t$
is nonzero,
$\bar{\omega_1}=\bar{\omega_2}$
by the uniqueness of normalization.
Therefore $\omega_1=\bar{\omega_1}\circ\truth \omega_1=\bar{\omega_2}\circ \truth \omega_2=\omega_2$.
\end{proof}

\section{Proofs in Section~\ref{sec:classification}}
\label{app:convex-proofs}

To prove Proposition~\ref{prop:sBOUS-equiv-sEMod},
first we establish the connection between
monotone $\sigma$-complete
ordered vector spaces
with order unit
and $\omega$-complete effect modules.

\begin{lemma}
\label{lem:emod-bigvee-commute}
Let $E$ be an $\omega$-complete effect $[0,1]$-module.
For each ascending sequence $(a_n)_{n\in\N}$
in $E$ and $N\in\N$, we have
$\bigvee_n 2^{-N}\cdot a_n = 2^{-N}\cdot \bigvee_n a_n$.
\end{lemma}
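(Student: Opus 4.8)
The plan is to prove the two inequalities separately; the inequality $\bigvee_n 2^{-N}\cdot a_n \le 2^{-N}\cdot\bigvee_m a_m$ is immediate, while the reverse is the substance of the lemma. For the easy direction I would first record that scalar multiplication $2^{-N}\cdot(-)\colon E\to E$ is monotone: this follows from biadditivity of the action, since $x\le y$ gives $y=x\ovee z$, whence $2^{-N}\cdot y=2^{-N}\cdot x\ovee 2^{-N}\cdot z\ge 2^{-N}\cdot x$. As $a_n\le\bigvee_m a_m$ for all $n$, monotonicity yields $2^{-N}\cdot a_n\le 2^{-N}\cdot\bigvee_m a_m$, so the right-hand side is an upper bound and $\bigvee_n 2^{-N}\cdot a_n\le 2^{-N}\cdot\bigvee_m a_m$.

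For the reverse inequality I would induct on $N$, reducing everything to the case $N=1$ (the case $N=0$ being trivial). The inductive step uses associativity of the action, $2^{-N}\cdot a_n=2^{-1}\cdot(2^{-(N-1)}\cdot a_n)$: the sequence $(2^{-(N-1)}\cdot a_n)_n$ is ascending by monotonicity, with supremum $2^{-(N-1)}\cdot\bigvee_m a_m$ by the induction hypothesis, so the $N=1$ case applied to this sequence gives $\bigvee_n 2^{-N}\cdot a_n=2^{-1}\cdot 2^{-(N-1)}\cdot\bigvee_m a_m=2^{-N}\cdot\bigvee_m a_m$, where the last equality is again associativity of the action together with $2^{-1}\cdot 2^{-(N-1)}=2^{-N}$ in $[0,1]$.

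It therefore remains to treat $N=1$. Write $a=\bigvee_m a_m$ and $b=\bigvee_n 2^{-1}\cdot a_n$ (which exists by $\omega$-completeness, the sequence being ascending). I already have $b\le 2^{-1}\cdot a$ and must show $2^{-1}\cdot a\le b$; the key move is a doubling argument. First, $b\ovee b$ is defined: from $b\le 2^{-1}\cdot a\le 2^{-1}\cdot 1$ and $(2^{-1}\cdot 1)^\bot=2^{-1}\cdot 1\ge b$ we get $b\le b^\bot$. Next, biadditivity in the scalar argument gives $2^{-1}\cdot a_n\ovee 2^{-1}\cdot a_n=(2^{-1}\ovee 2^{-1})\cdot a_n=1\cdot a_n=a_n$; since $2^{-1}\cdot a_n\le b$, monotonicity of $\ovee$ in each argument yields $a_n\le b\ovee b$ for every $n$, hence $a\le b\ovee b$. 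Finally, applying $2^{-1}\cdot(-)$ and using biadditivity in the module argument together with $(2^{-1}\ovee 2^{-1})\cdot b=b$, I obtain $2^{-1}\cdot a\le 2^{-1}\cdot(b\ovee b)=2^{-1}\cdot b\ovee 2^{-1}\cdot b=b$. This supplies the missing inequality and completes the lemma.

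The main obstacle is the reverse inequality in the base case, and within it the bookkeeping that legitimizes the doubling: one must verify that $b\ovee b$ is genuinely defined (via $b\le(2^{-1}\cdot 1)^\bot$) before invoking monotonicity of $\ovee$, and keep straight the two directions of biadditivity---splitting the scalar $1=2^{-1}\ovee 2^{-1}$ over a fixed element versus distributing a fixed scalar over a sum. All identities used ($1\cdot x=x$, $(r\ovee s)\cdot x=r\cdot x\ovee s\cdot x$, $r\cdot(x\ovee y)=r\cdot x\ovee r\cdot y$, associativity of the action, and monotonicity of $\ovee$ in each argument) are part of the effect-module axioms or are standard facts about effect algebras.
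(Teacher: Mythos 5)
Your proof is correct and follows essentially the same route as the paper's: both reduce to the case $N=1$ by induction via associativity of the action, and both rest on the same doubling trick --- the supremum $b$ of the halved sequence satisfies $b\perp b$ (since $b\le 2^{-1}\cdot 1$), the doubled element $b\ovee b$ is compared against $\bigvee_n a_n$ using $a_n = 2^{-1}\cdot a_n\ovee 2^{-1}\cdot a_n$, and the identity $2^{-1}\cdot x\ovee 2^{-1}\cdot x = x$ converts the conclusion back into the desired equation. The paper merely packages your two inequalities as the single claim $b\ovee b=\bigvee_n a_n$ (upper bound plus leastness) before halving, so the difference is organizational, not mathematical.
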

\begin{proof}
It suffices to prove
$\bigvee_n (1/2)\cdot a_n = (1/2)\cdot \bigvee_n a_n$,
which implies the claim by induction.
To simplify notation, we write $h=1/2$.
Let $b_n=h\cdot a_n$.
As $\bigvee_n b_n\le h\cdot 1$,
the sum $(\bigvee_n b_n)\ovee (\bigvee_n b_n)$ is defined.
We claim that
$(\bigvee_n b_n)\ovee (\bigvee_n b_n)=\bigvee_n a_n$.
Indeed, $a_n=b_n\ovee b_n\le (\bigvee_n b_n)\ovee (\bigvee_n b_n)$.
If $a_n\le c$,
then $b_n=h\cdot a_n\le h\cdot c$
and hence $\bigvee_n b_n\le h\cdot c$.
Thus $c=h\cdot c\ovee h\cdot c
\ge (\bigvee_n b_n)\ovee (\bigvee_n b_n)$.
Therefore
\[
h\cdot \bigvee_n a_n
\ =\  h\cdot
\qty\Big(\qty\Big(\bigvee_n b_n)\ovee \qty\Big(\bigvee_n b_n))
\ =\  h\cdot \qty\Big(\bigvee_n b_n)\ovee h\cdot \qty\Big(\bigvee_n b_n)
\ =\  \bigvee_n b_n
\ =\  \bigvee_n h\cdot a_n
\mpunct.
\qedhere
\]
\end{proof}

\begin{lemma}
\label{lem:monotone-sigma-complete-iff-omega-complete}
An ordered vector space $A$ with order unit $u$
is monotone $\sigma$-complete
if and only if the unit interval $[0,u]_A$ is
$\omega$-complete.
\end{lemma}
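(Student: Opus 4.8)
The plan is to prove the two implications separately, the reverse direction being the substantial one. For the forward direction I would assume $A$ is monotone $\sigma$-complete and take an increasing sequence $(a_n)_{n\in\N}$ in $[0,u]_A$. It is bounded above by $u$ in $A$, so its supremum $a$ exists in $A$; then $a\ge a_0\ge 0$, and since $u$ is an upper bound while $a$ is the least one, $a\le u$, so $a\in[0,u]_A$. As any upper bound of $(a_n)$ lying in $[0,u]_A$ is a fortiori an upper bound in $A$, the element $a$ is also the supremum computed inside $[0,u]_A$, which gives $\omega$-completeness.

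For the converse, assume $[0,u]_A$ is $\omega$-complete and let $(a_n)$ be an increasing sequence in $A$ bounded above, say by $c$. First I would translate and rescale the sequence into the unit interval: using that $u$ is an order unit, pick $m,k\in\N$ with $-mu\le a_0$ and $c\le ku$, set $L=m+k$, and put $d_n=L^{-1}(a_n+mu)$. Then $0\le d_n\le u$, so $(d_n)$ is an increasing sequence in $[0,u]_A$ and has a supremum $d$ there by hypothesis. Since $\phi(x)=Lx-mu$ is an order automorphism of $A$ with $\phi(d_n)=a_n$, it suffices to show that $d$ is the supremum of $(d_n)$ not merely in $[0,u]_A$ but in all of $A$; the desired supremum of $(a_n)$ is then $\phi(d)=Ld-mu$.

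The main obstacle, and the reason Lemma~\ref{lem:emod-bigvee-commute} is established just beforehand, is exactly this last point: $A$ is not assumed Archimedean, and an arbitrary upper bound of $(d_n)$ in $A$ may lie far outside $[0,u]_A$. Here I would take any $e\in A$ with $e\ge d_n$ for all $n$; then $e\ge d_0\ge 0$, and by the order-unit property $e\le k'u$ for some $k'\in\N$. Choosing $N$ with $2^{-N}k'\le 1$ forces $2^{-N}e\in[0,u]_A$, and this element dominates each $2^{-N}d_n$. Lemma~\ref{lem:emod-bigvee-commute} gives $\bigvee_n 2^{-N}d_n=2^{-N}d$ inside $[0,u]_A$, so $2^{-N}d\le 2^{-N}e$, and multiplying by $2^N$ yields $d\le e$. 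Hence $d$ is the least upper bound of $(d_n)$ in $A$, completing the argument. The only routine verifications I would leave implicit are that translation and positive scaling are order automorphisms of $A$ and that the shifted, rescaled terms indeed land in $[0,u]_A$.
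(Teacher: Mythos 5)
Your proposal is correct and follows essentially the same route as the paper: the forward direction is immediate, and for the converse you translate and rescale the bounded increasing sequence into $[0,u]_A$, take its supremum there, and invoke Lemma~\ref{lem:emod-bigvee-commute} to show that this supremum beats arbitrary upper bounds in the (possibly non-Archimedean) space $A$. The only cosmetic differences are that the paper translates by $-a_0$ and scales by a power of $2$, comparing suprema at two scales $2^{-N}$ and $2^{-M}$, whereas you use a single affine map $x\mapsto L^{-1}(x+mu)$ and scale the arbitrary upper bound down once before applying the same key lemma.
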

\begin{proof}
The `only if' direction is straightforward.
Conversely,
suppose that $[0,u]_A$ is
$\omega$-complete.
Let $(a_n)_n$ be an ascending sequence in $A$ bounded above.
Let $a'_n=a_n-a_0$,
so that
$(a'_n)_n$ is a positive ascending sequence bounded above.
We can find
$N\in\N$ such that $(a'_n)_n$ is bounded by $2^N u$.
Then $(2^{-N}\cdot a'_n)_n$ is an ascending sequence in $[0,u]_A$,
so there is a supremum $\bigvee_n 2^{-N}\cdot a'_n$ in $[0,u]_A$.
We will show that
$2^{N}\cdot \bigvee_n 2^{-N}\cdot a'_n$ is a supremum of $(a'_n)_n$
in $A$. Clearly
$a'_n\le 2^{N}\cdot \bigvee_n 2^{-N}\cdot a'_n$ for each $n\in\N$.
Suppose that $a'_n\le b$ for each $n\in\N$.
Then we can find $M\in\N$ such that
$b\le 2^M u$ and $N\le M$.
Then we have $\bigvee_n 2^{-M}\cdot a'_n\le 2^{-M}\cdot b$,
and hence
\[
b \ \ge\  2^{M}\cdot \bigvee_n 2^{-M}\cdot a'_n
\ =\  2^{M}\cdot \bigvee_n 2^{-(M-N)}\cdot 2^{-N}\cdot a'_n
\ \overset{\star}= \ 
2^{M}\cdot 2^{-(M-N)}\cdot \bigvee_n 2^{-N}\cdot a'_n
\ =\  2^{N}\cdot \bigvee_n 2^{-N}\cdot a'_n
\mpunct.
\]
Here all $\bigvee$ denote suprema in $[0,u]_A$,
and the equality $\overset{\star}=$ holds
by Lemma~\ref{lem:emod-bigvee-commute}.
Therefore
$(a'_n)_n$ has a supremum in $A$.
It follows that $(a_n)_n=(a_0+a'_n)_n$
has a supremum in $A$ too.
\end{proof}

The following equivalence for morphisms
can be proved similarly by translation and scaling.

\begin{lemma}
\label{lem:monotone-sigma-complete-iff-omega-complete-morphism}
Let $f\colon A\to B$ be a subunital positive linear map
between monotone $\sigma$-complete
ordered vector spaces with order unit.
Then $f$ is $\sigma$-normal if and only if
the restriction $f\colon [0,u]_A\to[0,u]_B$
is $\omega$-continuous.
\qed
\end{lemma}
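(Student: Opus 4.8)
The plan is to follow the strategy already used in Lemma~\ref{lem:monotone-sigma-complete-iff-omega-complete}, reducing the behaviour of $f$ on arbitrary ascending sequences bounded above in $A$ to its behaviour on ascending sequences lying inside the unit interval $[0,u]_A$, by translation and scaling. Two basic observations would be used throughout. First, for an ascending sequence contained in $[0,u]_A$, its supremum computed in $[0,u]_A$ coincides with its supremum computed in $A$: the latter exists by monotone $\sigma$-completeness and is $\le u$ since $u$ is an upper bound, hence lies in the interval; the same holds in $B$. Second, in an ordered vector space, translation $x\mapsto x+c$ by a fixed element and scaling $x\mapsto\lambda x$ by a positive real are order-isomorphisms, so they preserve whatever suprema exist, and a linear map commutes with both.

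For the ``only if'' direction, suppose $f$ is $\sigma$-normal. Let $(a_n)_n$ be an ascending sequence in $[0,u]_A$ with supremum $a=\bigvee_n a_n$ taken in $[0,u]_A$. By the first observation this is also the supremum in $A$, so $\sigma$-normality gives $f(a)=\bigvee_n f(a_n)$ in $B$. Since $f$ is subunital and positive, each $f(a_n)\le f(u)\le u$ and $f(a)\le f(u)\le u$, so by the first observation applied in $B$ this supremum is also formed in $[0,u]_B$. Hence the restriction $f\colon[0,u]_A\to[0,u]_B$ preserves suprema of ascending sequences, i.e.\ is $\omega$-continuous.

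For the ``if'' direction, assume the restriction is $\omega$-continuous and let $(a_n)_n$ be ascending in $A$ and bounded above. Put $a'_n=a_n-a_0$, an ascending sequence of positive elements bounded above, and choose $N\in\N$ with $a'_n\le 2^N u$, so that $(2^{-N}a'_n)_n$ lies in $[0,u]_A$. Exactly as in the proof of Lemma~\ref{lem:monotone-sigma-complete-iff-omega-complete}, $\bigvee_n a_n$ exists and equals $a_0+2^{N}\bigvee_n 2^{-N}a'_n$, where the inner supremum is taken in $[0,u]_A$. Applying the linear map $f$, pulling it through the translation by $a_0$ and the scaling by $2^N$, and invoking $\omega$-continuity of the restriction, I would then compute
\[
f\paren*{\bigvee_n a_n}
= f(a_0) + 2^{N} f\paren*{\bigvee_n 2^{-N}a'_n}
= f(a_0) + 2^{N}\bigvee_n 2^{-N} f(a'_n)
= \bigvee_n f(a_n),
\]
where the last equality again uses that translation and positive scaling commute with suprema in $B$. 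Thus $f$ is $\sigma$-normal.

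As in the previous lemma, the main obstacle is purely bookkeeping rather than a genuinely new idea: one must consistently track whether each supremum is formed in the unit interval or in the ambient space and use the first observation to identify them, and one must identify the effect-module scaling by $2^{-N}$ inside $[0,u]_A$ with honest scalar multiplication in $A$, which is exactly the content of Lemma~\ref{lem:emod-bigvee-commute}. Beyond this careful accounting, nothing is needed that was not already established for Lemma~\ref{lem:monotone-sigma-complete-iff-omega-complete}.
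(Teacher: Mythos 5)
Your proof is correct and takes essentially the same route as the paper's: the forward direction via the observation that suprema of chains in $[0,u]_A$ and $[0,u]_B$ coincide with suprema in $A$ and $B$, and the converse by translating by $a_0$ and scaling into the unit interval (the paper scales by an arbitrary $r>0$ rather than $2^{-N}$, an immaterial difference). One small remark: your closing appeal to Lemma~\ref{lem:emod-bigvee-commute} is not actually needed here, since $A$ is already assumed monotone $\sigma$-complete, so scaling commutes with suprema in $[0,u]_A$ simply because those suprema coincide with suprema in $A$ and scaling by a positive real is an order-isomorphism of $A$.
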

\begin{Auxproof}
If $f$ is $\sigma$-normal,
then the restriction $f\colon [0,u]_A\to[0,u]_B$
is $\sigma$-continuous,
because the suprema of $\sigma$-chains
in $[0,u]_A$ and $[0,u]_B$
coincide with suprema in $A$ and $B$, respectively.
Conversely, suppose that $f\colon [0,u]_A\to[0,u]_B$
is $\sigma$-continuous.
Let $(a_n)_n$ be an ascending sequence in $A$ bounded above.
Then there exists $r>0$ such that
$(ra_n-ra_0)_n$ is an ascending sequence in $[0,u]_A$.
Because
suprema of $\sigma$-chains in $[0,u]_A$ are suprema in $A$,
and translation and scaling preserve suprema in $A$,
\begin{align*}
r f \qty\Big(\bigvee_n a_n)-r f(a_0)
&= f\qty\Big(r\qty\Big(\bigvee_n a_n)-r a_0) \\
&= f\qty\Big(\bigvee_n (ra_n-ra_0)) \\
&=\bigvee_n f( ra_n-ra_0) \\
&=\bigvee_n (r f(a_n) - rf(a_0)) \\
&= r (\bigvee_n f(a_n)) - rf(a_0)
\end{align*}
Therefore $f (\bigvee_n a_n)=\bigvee_n f(a_n)$.
\end{Auxproof}

In order to prove Proposition~\ref{prop:sBOUS-equiv-sEMod}
we will need the following lemmas.

\begin{lemma}[{\cite[Lemmas~1.1 and 1.2]{Wright1972}}]
\label{lem:monotone-sigma-complete-bous}
Every monotone $\sigma$-complete
ordered vector space with order unit
is a Banach order-unit space.
\qed
\end{lemma}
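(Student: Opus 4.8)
The plan is to establish the two defining features of a Banach order-unit space separately: that $A$ is Archimedean (so that $(A,u)$ genuinely is an order-unit space) and that $A$ is complete in the order-unit norm $\norm{a}=\inf\set{r>0 | -ru\le a\le ru}$.

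For the Archimedean property I would argue in three steps. Suppose $nx\le u$ for all $n\in\N$; the goal is $x\le 0$. First I treat the special case $x\ge 0$: then $(nx)_{n}$ is increasing and bounded above by $u$, so by monotone $\sigma$-completeness $s=\bigvee_n nx$ exists. Since translation by $x$ is an order-isomorphism it preserves suprema, giving $s+x=\bigvee_n (n+1)x=s$ (the tail of an increasing sequence has the same supremum), whence $x=0$. Second, I apply this to $w=\bigwedge_n \tfrac1n u$, which exists because $(-\tfrac1n u)_n$ is increasing and bounded above by $0$; here $w\ge 0$ and $nw\le u$, so the special case yields $w=0$, i.e.\ $\bigwedge_n \tfrac1n u=0$. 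Finally, for general $x$ the hypothesis $nx\le u$ gives $x\le \tfrac1n u$ for every $n$, so $x$ is a lower bound of $\set{\tfrac1n u | n\ge 1}$; as the greatest lower bound of this set is $0$, we conclude $x\le 0$.

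For completeness, given a $\norm{-}$-Cauchy sequence I would first pass to a subsequence with $\norm{a_{n+1}-a_n}\le 2^{-(n+1)}$, so that $-2^{-(n+1)}u\le a_{n+1}-a_n\le 2^{-(n+1)}u$. The idea is to monotonize: setting $t_n=\sum_{k=2}^{n}2^{-k}$ (so $t_{n+1}-t_n=2^{-(n+1)}$ and $t_n\uparrow \tfrac12$), the translated sequence $d_n\coloneqq a_n+t_n u$ is increasing, and it is bounded above because Cauchy sequences are norm-bounded. Let $d=\bigvee_n d_n$. Telescoping and the bound $d_{k+1}-d_k\le 2^{-k}u$ give $d_m-d_n\le 2^{-(n-1)}u$ for $m\ge n$, and taking the supremum over $m$ (again using translation-invariance of suprema) yields $0\le d-d_n\le 2^{-(n-1)}u$, hence $\norm{d-d_n}\le 2^{-(n-1)}$. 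Therefore $a_n=d_n-t_n u\to d-\tfrac12 u$ in norm, and a Cauchy sequence with a convergent subsequence converges.

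The main obstacle is exactly this last transfer: monotone $\sigma$-completeness is an order condition and does \emph{not} on its own imply metric completeness, since an order-theoretic supremum need not be a norm limit. The real content therefore lies in the monotonization step together with the uniform estimate $0\le d-d_n\le 2^{-(n-1)}u$, which converts order convergence into norm convergence. Along the way I would also use the routine facts that $\norm{ru}=\abs{r}$ and that $\norm{-}$ is a genuine norm, the latter relying on the Archimedean property just proved; these are straightforward once Archimedeanity is in hand.
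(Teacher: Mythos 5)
Your proposal is correct. There is, however, no in-paper proof to compare it against: the paper states this lemma with a bare citation to Wright's Lemmas~1.1 and~1.2 and no argument of its own, so what you have written supplies the proof that the paper outsources. Your decomposition mirrors the two cited lemmas exactly --- Archimedeanity first, then norm-completeness --- and both halves are the classical arguments: for Archimedeanity, the supremum $s=\bigvee_n nx$ satisfying $s+x=s$, then $\bigwedge_{n\ge 1} \tfrac1n u=0$, then the general case; for completeness, monotonizing a rapidly Cauchy subsequence by adding $t_n u$ and showing that the order-theoretic supremum is in fact a norm limit via the uniform estimate $0\le d-d_n\le 2^{-(n-1)}u$. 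One small point deserves emphasis: the step converting norm bounds into order bounds (from $\norm{a_{n+1}-a_n}\le 2^{-(n+1)}$ to $-2^{-(n+1)}u\le a_{n+1}-a_n\le 2^{-(n+1)}u$, and from norm-boundedness of the Cauchy sequence to an order upper bound on $(d_n)_n$) is itself an application of the Archimedean property, since the infimum defining the order-unit norm need not be attained otherwise; alternatively one can choose the subsequence with strict inequalities $\norm{a_{n+1}-a_n}< 2^{-(n+1)}$ so that the order bound follows directly from the definition of the infimum. Because you prove Archimedeanity before invoking it there, and you explicitly note that the basic norm facts depend on it, the logical order of your argument is sound as written.
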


\begin{lemma}
\label{lem:omega-complete-emod-is-sigma-emod}
Every $\omega$-complete effect $[0,1]$-module
is a $\sigma$-effect $[0,1]$-module.
\end{lemma}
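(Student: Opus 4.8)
The plan is to show that on an $\omega$-complete effect $[0,1]$-module $E$ the $[0,1]$-action is automatically $\sigma$-biadditive, which is the only gap between such modules and $\sigma$-effect $[0,1]$-modules. Indeed, since $E$ is $\omega$-complete its underlying effect algebra is a $\sigma$-effect algebra, carrying its canonical $\sigma$-PAM structure (Definition~\ref{def:sigma-effect-algebra}); so it remains only to upgrade biadditivity of the action to $\sigma$-biadditivity. For each fixed $r\in[0,1]$ the map $a\mapsto r\cdot a\colon E\to E$ and for each fixed $a\in E$ the map $r\mapsto r\cdot a\colon[0,1]\to E$ are additive between $\sigma$-effect algebras, so by Lemma~\ref{lem:sigma-additive-equiv-omega-conti} it suffices to prove that both are $\omega$-continuous, i.e.\ that they preserve suprema of increasing sequences.

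To compute these suprema I would pass to the order-unit picture. By Proposition~\ref{prop:OVSu-equiv-EMod} we may assume $E=[0,u]_A$ for some $(A,u)\in\OVSu$, with the $[0,1]$-action given by scalar multiplication. Since $E$ is $\omega$-complete, Lemma~\ref{lem:monotone-sigma-complete-iff-omega-complete} shows $A$ is monotone $\sigma$-complete, whence Lemma~\ref{lem:monotone-sigma-complete-bous} makes $A$ a Banach order-unit space; in particular $A$ is Archimedean. Moreover a supremum in $A$ of an increasing sequence from $[0,u]_A$ again lies in $[0,u]_A$ and an upper bound in $[0,u]_A$ is one in $A$, so suprema taken in $E$ agree with those taken in $A$, and the two $\omega$-continuity claims may be verified using suprema in the vector space $A$.

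The continuity in the module argument is immediate: for $r>0$ the map $x\mapsto rx$ is an order-automorphism of $A$ (with inverse $x\mapsto r^{-1}x$) and hence preserves suprema, giving $\bigvee_n r\cdot a_n=r\cdot\bigvee_n a_n$; the case $r=0$ is trivial. The continuity in the scalar argument is the crux. Given an increasing sequence $r_0\le r_1\le\dotsb$ in $[0,1]$ with supremum $r$ and a fixed $a\in[0,u]_A$, the element $ra$ is an upper bound of $(r_na)_n$. If $c\in A$ is any other upper bound, then for every $n$
\[
ra-c\ \le\ ra-r_na\ =\ (r-r_n)a\ \le\ (r-r_n)u,
\]
and since $r-r_n\to 0$ this yields $ra-c\le\varepsilon u$ for every $\varepsilon>0$; the Archimedean property then forces $ra-c\le 0$, so $ra$ is the least upper bound. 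Hence $r\mapsto r\cdot a$ is $\omega$-continuous as well, the action is $\sigma$-biadditive, and $E$ is a $\sigma$-effect $[0,1]$-module.

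The main obstacle is precisely this scalar-continuity step, where one must rule out upper bounds strictly below $ra$. This is exactly where the Archimedean property is needed, and it is available only because monotone $\sigma$-completeness upgrades $A$ to a Banach order-unit space through Lemma~\ref{lem:monotone-sigma-complete-bous}; the dyadic identity of Lemma~\ref{lem:emod-bigvee-commute} alone would handle only scalars of the form $2^{-N}$, so routing the argument through the order-unit space handles all $r\in[0,1]$ uniformly.
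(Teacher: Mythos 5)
Your proof is correct and follows essentially the same route as the paper's: reduce $\sigma$-biadditivity to $\omega$-continuity in each argument via Lemma~\ref{lem:sigma-additive-equiv-omega-conti}, pass to a monotone $\sigma$-complete Banach order-unit space using Proposition~\ref{prop:OVSu-equiv-EMod} and Lemmas~\ref{lem:monotone-sigma-complete-iff-omega-complete} and~\ref{lem:monotone-sigma-complete-bous}, then handle the scalar argument with the Archimedean property and the module argument via the order-automorphism $x\mapsto rx$. Your $\varepsilon$-formulation of the Archimedean step is just a rephrasing of the paper's $1/N$ argument, so there is nothing to add.
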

\begin{proof}
Let $E$ be an $\omega$-complete effect $[0,1]$-module.
We need to prove that
the $[0,1]$-action $\cdot\colon [0,1]\times E\to E$
is $\sigma$-biadditive.
By Lemma~\ref{lem:sigma-additive-equiv-omega-conti},
it suffices to prove $\omega$-continuity in each argument.
By Proposition~\ref{prop:OVSu-equiv-EMod}
and Lemmas~\ref{lem:monotone-sigma-complete-iff-omega-complete}
and~\ref{lem:monotone-sigma-complete-bous},
we may assume that $E=[0,u]_A$ for
some monotone $\sigma$-complete Banach order-unit space~$(A,u)$.

\textit{$\omega$-continuity in the first argument:}
Fix $a\in [0,u]_{A}$. We will prove that
$(-)\smul a\colon [0,1]\to [0,u]_{A}$ is $\omega$-continuous.
Let $(r_n)_{n\in\N}$ be an ascending sequence in $[0,1]$.
Clearly $(\bigvee_n r_n)\cdot a$ is an upper bound
of $r_n\cdot a$.
Let $b\in[0,u]_{A}$ satisfy
$r_n\cdot a\le b$ for all $n\in\N$.
Let $N\in\N$ be an arbitrary nonzero number.
Then there is some $m\in\N$ such that
$\bigvee_n r_n< r_m + \frac{1}{N}$,
so that
\[
\qty\Big(\bigvee\nolimits_n r_n)\cdot a
\ \le\  (r_m + \frac{1}{N})\cdot a
\ = \  r_m\cdot a + \frac{a}{N}
\ \le\  b + \frac{u}{N}
\mpunct.
\]
Thus $N\cdot ((\bigvee\nolimits_n r_n)\cdot a - b)\le u$.
Because $N$ is arbitrary and $A$ is Archimedean,
we obtain $(\bigvee\nolimits_n r_n)\cdot a - b\le 0$,
that is, $(\bigvee\nolimits_n r_n)\cdot a\le b$.
Therefore $(\bigvee\nolimits_n r_n)\cdot a=
\bigvee_n (r_n\cdot a)$.

\textit{$\omega$-continuity in the second argument:}
If $r=0$, then $0\cdot (-)\colon [0,u]_{A}\to[0,u]_{A}$
is trivially $\omega$-continuous.
Fix $r\in (0,1]$.
Then $r\cdot (-)\colon A\to A$
is an order isomorphism, with
the monotone inverse $r^{-1}\cdot (-)\colon A\to A$.
Thus $r\cdot (-)\colon A\to A$
preserves all suprema in $A$,
and the restriction
$r\cdot (-)\colon [0,u]_{A}\to [0,u]_{A}$
is $\omega$-continuous.
\end{proof}

\begin{proof}[Proof of Proposition~\ref{prop:sBOUS-equiv-sEMod}]
By Lemmas~\ref{lem:monotone-sigma-complete-iff-omega-complete}
and~\ref{lem:monotone-sigma-complete-iff-omega-complete-morphism},
the equivalence $\OVSu\simeq\EMod[[0,1]]$
of Proposition~\ref{prop:OVSu-equiv-EMod}
restricts to the category
of monotone $\sigma$-complete ordered vector spaces
with order unit and $\sigma$-normal subunital positive linear maps,
and the category of
$\omega$-complete effect $[0,1]$-modules
and $\omega$-continuous additive maps.
These two categories are respectively equal to $\sBOUS$
and $\sEMod[[0,1]]$
by Lemmas~\ref{lem:monotone-sigma-complete-bous}
and~\ref{lem:omega-complete-emod-is-sigma-emod}.
\end{proof}



\begin{proof}[Proof of Proposition~\ref{prop:OVSt-equiv-CWMod}]
The construction of the `inverse' functor $\CWMod[[0,1]]\to \OVSt$
is very much the same as that of
$\EMod[[0,1]]\to \OVSu$ given in \cite[\S\,3.1]{JacobsMF2016}.
We sketch the construction below, and
refer to \cite[\S\,7.2.1]{Cho2019PhD} for further details.

Let $X$ be a cancellative weight $[0,1]$-module.
The totalization \cite{jacobs2012coreflections}
of the PCM $X$
is the commutative monoid $\Tz(X)=\Mlt(X)/{\sim}$
where $\Mlt(X)$
is the free commutative monoid on $X$
consisting of finite multisets on $X$,
denoted as formal finite sums $\sum_i n_i \cdot x_i$
for $n_i\in\N$ and $x_i\in X$,
and $\sim$
is the smallest monoid congruence
such that $1\cdot (x\ovee y)\sim 1\cdot x + 1\cdot y$
and $1\cdot 0\sim 0$.
There is an embedding $X\to \Tz(X)$
given by $x\mapsto 1\cdot x$ which is injective.
Because $X$ is a weight $[0,1]$-module,
$\Tz(X)$ can be equipped with
an monoid action $\Rpos\times \Tz(X)\to \Tz(X)$,
and the weight map
extends to $\weight{-}\colon \Tz(X)\to \Rpos$.
By cancellativity of $X$,
we can prove that $\Tz(X)$ is a cancellative monoid.

We then define $V(X)=(\Tz(X)\times \Tz(X))/{\approx}$
where $\approx$ is defined by
$(a,b)\approx (c,d)$ iff $a+d=b+c$.
Because $\Tz(X)$ is cancellative,
$\Tz(X)$ embeds into the Abelian group $V(X)$
by $a\mapsto (a,0)$.
Now $V(X)$ forms a real vector space with
the scalar multiplication $r(a,b)=(ra,rb)$
for $r\ge 0$
and $r(a,b)=((-r)b,(-r)a)$
for $r< 0$.
With $\Tz(X)$ embedded in $V(X)$
as a positive cone, $V(X)$ forms an ordered vector space.
Moreover, $V(X)$ is positively generated and equipped with
trace $\tau\colon V(X)\to \R$ given by $\tau(a,b)=\weight{a}-\weight{b}$.
\end{proof}

The following lemma is similar to
\cite[Proposition~2.4.11 and Lemma 2.4.12]{Furber2017}
and \cite[Corollary~2]{BoergerK1996} (see also \cite{EdwardsG1970}),
but here stated in terms of weight modules
instead of convex sets.

\begin{lemma}
\label{lem:bbns-if-subbase-sigma-wmod}
Let $V$ be an ordered vector space with trace $\tau$.
Assume that the subbase $\sBase(V)$
forms a $\sigma$-weight $[0,1]$-module,
extending its canonical weight $[0,1]$-module structure.
Then $V$ is a Banach pre-base-norm space.
Moreover, for each countable summable family $(x_n)_{n\in\N}$ in
the $\sigma$-weight $[0,1]$-module $\sBase(V)$,
the series $\sum_{n=0}^{\infty} x_n$ converges to
$\bigovee_{n\in\N} x_n$ with respect to the base norm.
\end{lemma}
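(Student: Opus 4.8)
The plan is to prove the three pieces in the order that makes the convergence statement do most of the work: first the \emph{moreover} (norm-convergence of summable families), then the pre-base-norm (norm) property, and finally completeness. Throughout I use the elementary identity $\norm{z}=\tau(z)$ for $z\in V_+$: the decomposition $z=z-0$ gives $\norm z\le\tau(z)$, while any $z=z_1-z_2$ with $z_i\in V_+$ has $\tau(z_1)+\tau(z_2)\ge\tau(z_1)-\tau(z_2)=\tau(z)$, so the infimum defining $\norm z$ is $\tau(z)$.

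\textbf{Convergence.} Let $(x_n)_{n\in\N}$ be summable in $\sBase(V)$, so $\sum_n\tau(x_n)\le 1$, and set $s=\bigovee_n x_n$. Fix $N$ and apply partition-associativity to $\N=\{0,\dots,N\}\uplus\{n>N\}$: writing $t_N:=\bigovee_{n>N}x_n\in\sBase(V)$ one obtains $s=\bigl(\bigovee_{n\le N}x_n\bigr)\ovee t_N$. Since the canonical weight-module operation is the ambient vector sum whenever defined, this reads $s=\sum_{n\le N}x_n+t_N$, whence $s-\sum_{n\le N}x_n=t_N\in V_+$. Therefore $\norm{s-\sum_{n\le N}x_n}=\tau(t_N)$, and by $\sigma$-additivity of the weight $\tau=\weight{-}$ we get $\tau(t_N)=\sum_{n>N}\tau(x_n)\to 0$. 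Thus the partial sums converge to $s$ in the base seminorm (this step needs no norm property and is valid for the seminorm).

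\textbf{Completeness.} It suffices to show that absolutely convergent series converge, which gives completeness for a seminormed space as well. Given $(y_k)$ with $\sum_k\norm{y_k}<\infty$, choose $y_k=a_k-b_k$ with $a_k,b_k\in V_+$ and $\tau(a_k)+\tau(b_k)\le\norm{y_k}+2^{-k}=:\epsilon_k$, so $S:=\sum_k\epsilon_k<\infty$. Pick $\lambda\in(0,1]$ with $\lambda S\le 1$; then each $\lambda a_k,\lambda b_k\in\sBase(V)$ and $(\lambda a_k)$, $(\lambda b_k)$ are summable. Their sums exist by hypothesis, and by the convergence step $\sum_{k\le K}\lambda a_k$ converges; dividing by $\lambda$ (the seminorm is homogeneous) shows $\sum_{k\le K}a_k$ and $\sum_{k\le K}b_k$ converge, hence so does $\sum_{k\le K}y_k$. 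Once the norm property below is established, limits are unique and $V$ is a Banach space.

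\textbf{Pre-base-norm property, and the main obstacle.} Assume $\norm x=0$; I must derive $x=0$. Choosing decompositions $x=a_n-b_n$ with $\tau(a_n)+\tau(b_n)\le 2^{-n-1}$, the families $(a_n),(b_n)$ are summable, so $A=\bigovee_n a_n$ and $B=\bigovee_n b_n$ exist in $V$. From $x\le a_n$ and $A_{\ge N}:=\bigovee_{n\ge N}a_n\ge a_n$ one gets $x\le A_{\ge N}$ with $\tau(A_{\ge N})\to 0$; by the convergence step $A_{\ge N}\to 0$ in norm, so the positive elements $A_{\ge N}-x$ converge to $-x$, and symmetrically positive elements converge to $x$. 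Hence it suffices to show that $V_+$ is \emph{closed} in the base seminorm: then $-x\ge 0$ and $x\ge 0$, and properness of the cone (if $z,-z\in V_+$ with $z\neq 0$ then $\tau(z)>0$ and $\tau(z)=-\tau(-z)<0$, a contradiction, using strict positivity) forces $x=0$. This closedness of $V_+$ is the crux, and it is equivalent to the statement that each countable sum $\bigovee_n x_n$ is the \emph{order}-supremum of its partial sums, or that a decreasing positive sequence with $\tau\to 0$ has infimum $0$. Purely order-theoretic manipulations of these reduce circularly to one another and do not terminate; the point where the hypothesis is genuinely used is that the sums exist \emph{inside} $V$, which prevents the partial sums from escaping into a $\tau$-null (seminorm-null) direction, as one sees by checking that the would-be counterexamples (e.g.\ weighted $\ell^1$ adjoined with a seminorm-null axis) precisely fail to be $\sigma$-weight modules. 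Turning this into a proof — combining the existence of countable sums with strict positivity to close the cone — is the analytic heart of the argument and is exactly the content of the Edwards–Gerzon theorem referenced in the footnote; I expect this to be the hardest and longest step, with the convergence and completeness parts above being routine once it is in hand.
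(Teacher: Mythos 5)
Your first two steps (norm convergence of summable families, and completeness via absolutely convergent series) are correct and essentially identical to the corresponding parts of the paper's proof. The genuine gap is exactly where you flag it: the pre-base-norm property. You reduce $\norm{x}=0\Rightarrow x=0$ to norm-closedness of the cone $V_+$ and then leave that unproven, deferring to ``Edwards--Gerzon''. This is not a fixable omission but a dead end, because norm-closedness of $V_+$ (and your order-theoretic reformulations, e.g.\ that a decreasing positive sequence with $\tau\to 0$ has infimum $0$) simply does \emph{not} follow from the hypotheses. Take $V=\R^2$ with $V_+=\{(a,b) : a>0,\, b>0\}\cup\{(0,0)\}$ and $\tau(a,b)=a+b$. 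This is positively generated with strictly positive trace, and $\sBase(V)$ is a $\sigma$-weight $[0,1]$-module (coordinatewise sums of families with summable weights stay inside $\sBase(V)$), so all hypotheses of the lemma hold --- and its conclusion holds too, $\norm{-}$ being a complete norm on $\R^2$ --- yet $V_+$ is not norm-closed ($(1,1/n)\to(1,0)\notin V_+$) and the decreasing positive sequence $(2^{-N},2^{-N})$ has no infimum. So any route through closedness of the cone, or through $\bigovee$ being an order-supremum of partial sums, is unworkable: you would be trying to prove something strictly stronger than the lemma, and false.

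The missing idea in the paper is that \emph{cancellativity} of $\sBase(V)$ (automatic, since it sits inside a vector space) can do the work you wanted a closure property to do, via a shift-and-cancel trick. Suppose $\norm{x-y}=0$ with $x,y\in\sBase(V)$ (the general case reduces to this by scaling, using strict positivity of $\tau$ when both traces vanish). Choose decompositions $x-y=w_n-z_n$ with $z_n,w_n\in V_+$ and $\tau(z_n)+\tau(w_n)\le 2^{-n-1}$. Comparing consecutive decompositions gives $z_n+w_{n+1}=z_{n+1}+w_n$, i.e.\ $z_n\ovee w_{n+1}=z_{n+1}\ovee w_n$ in $\sBase(V)$. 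Since $\sum_n\weight{z_n}+\sum_n\weight{w_n}\le 1$, the whole family $\{z_n\}_n\cup\{w_n\}_n$ is summable, and partition-associativity yields
\[
z_0\ovee\Bigl(\bigovee_{n\ge 1}z_n\Bigr)\ovee\Bigl(\bigovee_{n\ge 1}w_n\Bigr)
\;=\;\bigovee_{n\ge 0}\bigl(z_n\ovee w_{n+1}\bigr)
\;=\;\bigovee_{n\ge 0}\bigl(z_{n+1}\ovee w_n\bigr)
\;=\;w_0\ovee\Bigl(\bigovee_{n\ge 1}z_n\Bigr)\ovee\Bigl(\bigovee_{n\ge 1}w_n\Bigr)
\mpunct.
\]
Cancelling the common summand gives $z_0=w_0$, hence $x-y=w_0-z_0=0$. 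Note that this uses the existence of the countable sums purely algebraically; no topological or order-completeness statement about $V_+$ is ever needed, which is precisely why it succeeds where your reduction cannot.
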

\begin{proof}
We first prove that $V$ is a pre-base-norm space (i.e.\ that the seminorm is actually a norm).
Let $a\in V$ satisfy $\norm{a}=0$.
Let $\tilde{x},\tilde{y}\in V_+$ be such that $a=\tilde{x}-\tilde{y}$.
Let $r= \max(\tau(\tilde{x}),\tau(\tilde{y}))$.
If $r=0$, we have $a=0$.
Otherwise,
writing $x=r^{-1}\tilde{x}$ and $y=r^{-1}\tilde{y}$,
we have $x,y\in\sBase(V)$ and $r\norm{x-y}=\norm{a}=0$,
so that $\norm{x-y}=0$.
It suffices to prove that $x-y=0$.

By $\norm{x-y}=0$, for each $n\in\N$
we can find $z_n,w_n\in V_+$ such that
$x-y=w_n-z_n$
and $\tau(z_n)+\tau(w_n)\le 1/2^{n+1}$.
Note that $z_n,w_n\in \sBase(V)$
and by $w_n-z_n=x-y=w_{n+1}-z_{n+1}$,
we have $z_n + w_{n+1}=z_{n+1} + w_n$.
Because $\sum_{n\in\N}\weight{z_n}+\sum_{n\in\N}\weight{w_n}\le 1$,
the following countable sums exist
in the $\sigma$-weight $[0,1]$-module $\sBase(V)$,
and the equations hold by partition-associativity.
\begin{align*}
z_0\ovee
\qty\Big(\bigovee_{n=1}^{\infty} z_n)
\ovee \qty\Big(\bigovee_{n=1}^{\infty} w_n)
&\ =\ 
\bigovee_{n=0}^{\infty}
(z_n\ovee w_{n+1})
\\
&\ =\ 
\bigovee_{n=0}^{\infty}
(z_{n+1}\ovee w_{n})
\ =\ 
w_0\ovee
\qty\Big(\bigovee_{n=1}^{\infty} z_n)
\ovee \qty\Big(\bigovee_{n=1}^{\infty} w_n)
\end{align*}
By cancellation, $z_0=w_0$, so that $x-y=w_0-z_0=0$.

Before proving that $V$ is a Banach space,
we prove the claim about convergence.
Let $(x_n)_n$ be
a countable family summable in $\sBase(V)$.
Using the fact that $\weight{x}\equiv\tau(x)=\norm{x}$
for $x\in \sBase(V)$ --- see
\cite[Corollary 2.2.5]{Furber2017} ---
we have for each $N\in\N$
\[
\norm\Big{
\qty\Big(\bigovee_{n\in\N} x_n)
-
\qty\Big(\sum_{n=0}^N x_n)}
\ =\ 
\norm\Big{
\bigovee_{n=N+1}^{\infty} x_n}
\ =\ 
\weight[\Big]{
\bigovee_{n=N+1}^{\infty} x_n}
\ =\ 
\sum_{n=N+1}^{\infty} \weight{x_n}
\mpunct.
\]
Because
$\lim_{N\to\infty}\sum_{n=0}^{N} \weight{x_n}= \sum_{n=0}^{\infty} \weight{x_n}$
and $\sum_{n=0}^{N} \weight{x_n}+
\sum_{n=N+1}^{\infty} \weight{x_n}=\sum_{n=0}^{\infty} \weight{x_n}<\infty$
we must have\linebreak
$\lim_{N\to\infty}\sum_{n=N+1}^{\infty} \weight{x_n}= 0$.
Therefore the series $\sum_{n=0}^{\infty} x_n$ converges to $\bigovee_{n\in\N} x_n$.

Finally we prove that $V$ is a Banach space.
It suffices to prove that
every absolutely convergent series converges.
Let $(x_n)_{n\in\N}$
be an absolutely convergent series.
Without loss of generality we may assume that
$\sum_{n=0}^{\infty} \norm{x_n}\le 1/2$
and $\norm{x_n}\neq 0$ for all $n\in\N$.
For each $n\in\N$ we can find $y_n,z_n\in V_+$
such that $\tau(y_n)+\tau(z_n)< 2\norm{x_n}$
and $x_n=y_n-z_n$.
Because $\tau(y_n)+\tau(z_n)< 2\norm{x_n}\le 1$,
we have $y_n,z_n\in \sBase(V)$.
Moreover we have
\[
\sum_{n=0}^\infty\weight{y_n}
\ =\ \sum_{n=0}^\infty\tau(y_n)
\ \le\  \sum_{n=0}^\infty 2\norm{x_n}
\ \le\  1
\]
and similarly $\sum_{n=0}^\infty\weight{z_n}\le 1$,
that is, $(y_n)_n$ and $(z_n)_n$ are summable in $\sBase(V)$.
Let $a= \bigovee_n y_n$
and $b= \bigovee_n z_n$.
By what we have shown above,
$\sum_{n=0}^N y_n\to a$ and $\sum_{n=0}^N z_n\to b$
when $N\to \infty$.
Therefore $\sum_{n=0}^N x_n=(\sum_{n=0}^N y_n)-
(\sum_{n=0}^N z_n)\to a-b$ when $N\to \infty$.
\end{proof}

\begin{proof}[Proof of Proposition~\ref{prop:sBBNS-equiv-sCWMod}]
It is easy to see that for each $V\in \sBBNS$,
the subbase $\sBase(V)$ forms a $\sigma$-weight $[0,1]$-module
whose countable addition is given by sums of series.
By this fact and
Lemma~\ref{lem:bbns-if-subbase-sigma-wmod},
the equivalence $\OVSt\simeq\CWMod[[0,1]]$
can be restricted to
$\sBBNS$ and the full subcategory
of $\CWMod[[0,1]]$ consisting of cancellative weight $[0,1]$-modules
that have an extension to a $\sigma$-weight $[0,1]$-module.
Let $\CWMod[[0,1]]'$ denote this subcategory.
There is a bijection between objects
of $\CWMod[[0,1]]'$ and $\sCWMod[[0,1]]$,
because an extension of a weight $[0,1]$-module to
a $\sigma$-weight $[0,1]$-module is unique by
Lemma~\ref{lem:bbns-if-subbase-sigma-wmod}.
Let $f\colon X\to Y$ be a morphism in $\CWMod[[0,1]]'$.
Then we can represent
$X$ and $Y$ respectively as
$\sBase(V_X)$ and $\sBase(V_Y)$ for some $V_X,V_Y\in \sBBNS$,
and $f$ extends to a morphism $V_X\to V_Y$ in $\sBBNS$.
Because
the countable sums in $\sBase(V_X),\sBase(V_Y)$ are given by convergent series
and $f$ is continuous,
$f$ preserves countable sums,
i.e.\ it is a morphism in $\sCWMod[[0,1]]$.
We conclude that
$\CWMod[[0,1]]'$ is isomorphic to $\sCWMod[[0,1]]$.
\end{proof}

\end{document}